\def \dis {\displaystyle}
\def \ecart {\noalign{\medskip}}
\newtheorem{theorem}{Theorem}
\newtheorem{corollary}[theorem]{Corollary}
\newtheorem{definition}[theorem]{Definition}
\newtheorem{lemma}[theorem]{Lemma}
\newtheorem{proposition}[theorem]{Proposition}
\newtheorem{remark}[theorem]{Remark}
\def \refL #1{Lemma~\ref{#1}}
\def \refC #1{Corollary~\ref{#1}}
\def \refP #1{Proposition~\ref{#1}}
\def \refR #1{Remark~\ref{#1}}
\title{Semigroups generated in $L^{p}$-spaces by some dispersal process
including semi-permeability conditions at the interface}
\author{La\"{i}d Djilali, Rabah Labbas, Ahmed Medeghri, Abdallah Menad
\& Alexandre Thorel \\
{\scriptsize R. L and A.T: Université Le Havre Normandie, Normandie Univ, LMAH UR 3821, 76600 Le Havre, France.}\\
{\scriptsize rabah.labbas@univ-lehavre.fr, alexandre.thorel@univ-lehavre.fr}\\
{\scriptsize L. D and A. M: Laboratoire de Math\'{e}matiques
Pures et Appliqu\'{e}es,}\\
{\scriptsize Universit\'{e} Abdelhamid Ibn Badis, 27000 Mostaganem, Alg\'{e}rie,}\\
{\scriptsize A. M: National Higher School of Mathematics, Scientific and Technology Hub of Sidi Abdellah,}\\
{\scriptsize P.O. Box 75, Algiers 16093, Algeria.}\\
{\scriptsize laid.djilali@univ-mosta.dz, ahmed.medeghri@nhsm.edu.dz, abellah.menad@univ-mosta.dz}}
\date{\empty }
\begin{document}

\maketitle

\begin{abstract}
We study an elliptic differential equation set in two habitats under semi-permeability conditions at the interface. This equation describes some dispersal process in population dynamics. Using the well-known Dore-Venni theorem, some useful results in \cite{DoreLabbas} and \cite{weis}, we show that the associated space operator generates an analytic semigroup in $L^{p}$-spaces.

\textbf{Key Words and Phrases}: Abstract elliptic differential equations; transmission problem; functional calculus; semi-permeability conditions; population dynamics.\medskip

\textbf{2020 Mathematics Subject Classification}: 34G10, 35J25, 47A60, 47D06, 92D25.
\end{abstract}

\section{Introduction}

In this work, we are interested in the study of a reaction-diffusion model for population dynamics with a dispersal process in two habitats. This model takes into account the reaction effect of individuals on the interface of these habitats. 

Our aim is to show that this dispersal process generates an analytic semigroup in the framework of $L^{p}$-spaces. The present work completes naturally the results obtained in \cite{FainiLabbasMedeghriMenad}.

Many authors have worked on different models of reaction-diffusion problems
related to biology or various environmental phenomena. We were inspired, in particular, by the work in \cite{Cantrell}, where the model considered incorporates the response of individuals on the interface between the habitats. This study was detailed in one space dimension and focused mainly on the spectral aspect of the dispersal process, taking into account the dimension of the habitats.

Here, the concrete example which illustrates our theory, is modelled by partial differential equations of parabolic type set in two juxtaposed habitats $\Omega _{-}$ and $\Omega _{+}$: 
\begin{equation*}
\Omega =\Omega _{-}\cup \Gamma _{0}\cup \Omega _{+},
\end{equation*}
where
\begin{equation}
\left\{ 
\begin{array}{l}
\Omega _{-}=(-\ell ,0) \times (0,1), \medskip \\ 
\Gamma _{0}=\left\{ 0\right\} \times (0,1), \medskip \\ 
\Omega _{+}= (0,L) \times (0,1), \medskip \\ 
\Gamma _{\pm }=\partial \Omega _{\pm }\backslash \Gamma _{0},
\end{array}
\right.  \label{Landscape}
\end{equation}
and $\ell $, $L>0$. The reaction-diffusion equation is
\begin{equation}
\dfrac{\partial u}{\partial t}(t,x,y)=\left\{ 
\begin{array}{ll}
d_{-}\Delta u_{-}(t,x,y)+F_{-}(u_{-}(t,x,y)) & \text{in } (0,T)
\times \Omega _{-},\medskip \\ 
d_{+}\Delta u_{+}(t,x,y)+F_{+}(u_{+}(t,x,y)) & \text{in } (0,T)
\times \Omega _{+},
\end{array}\right.  \label{Equation}
\end{equation}
under the initial data
\begin{equation}
u(0,.)=\left\{ 
\begin{array}{ll}
\varphi _{-} & \text{in }\Omega _{-}, \\ 
\varphi _{+} & \text{in }\Omega _{+},
\end{array}
\right.  \label{InitialData}
\end{equation}
the boundary conditions 
\begin{equation}
\left\{ 
\begin{array}{ll}
u_{-}=0 & \text{on }\Gamma _{-}, \\ 
u_{+}=0 & \text{on }\Gamma _{+},
\end{array}\right.  \label{BoundaryConditons}
\end{equation}
and the interface conditions
\begin{equation}
\left\{ 
\begin{array}{ll}
d_{-}\dfrac{\partial u_{-}}{\partial x}=q\left( u_{+}-u_{-}\right) & \text{on }\Gamma _{0}, \medskip \\ 
d_{+}\dfrac{\partial u_{+}}{\partial x}=q\left( u_{+}-u_{-}\right) & \text{on }\Gamma _{0};
\end{array}\right.  \label{TransmissionCdts}
\end{equation}
where $d_{\pm } > 0$ is the diffusion coefficient, $q>0$ is given and $u_{\pm }$ represents a population density in $\Omega_\pm$. 

These two last conditions in \eqref{TransmissionCdts} mean that the flux at the
interface depends on the density of the population. They are of the Robin
type and express the semi-permeability of the interface \ $\Gamma _{0}$.

In \cite{FainiLabbasMedeghriMenad}, the authors have considered different
interface conditions in the framework of the space of continuous functions
which are the following: the non-continuity of the flux and the continuity
of the dispersal at the interface: 
\begin{equation*}
\left\{\begin{array}{ll}
(1-\alpha )d_{-}\dfrac{\partial u_{-}}{\partial x}=\alpha d_{+}\dfrac{\partial u_{+}}{\partial x} & \text{on }\Gamma _{0}, \medskip \\

d_{-}\Delta u_{-}+F_{-}(u_{-})=d_{+}\Delta u_{+}+F_{+}(u_{+}) & \text{on }\Gamma _{0},
\end{array}\right.
\end{equation*}
where $\alpha \in (0,1)$ with $\alpha \neq 1/2$.

Here, we consider only the linearized part of logistic functions, that is
\begin{equation*}
\left\{ 
\begin{array}{ll}
F_{-}(u_{-})=r_{-}u_{-} & \text{on } (-\ell ,0) \times (0,1), \\ 
F_{+}(u_{+})=r_{+}u_{+} & \text{on } (0,L) \times (0,1),
\end{array}\right.
\end{equation*}
where $r_\pm > 0$.

The study of the above reaction-diffusion problem depends essentially on the nature
of operator $\mathcal{P}$ defined, in the stationary case, by
\begin{equation*}
\left\{ 
\begin{array}{l}
D\left( \mathcal{P}\right) = \left\{ 
\begin{array}{l}
u\in L^{p}(\Omega ):\text{ }u_{-}\in W^{2,p}\left( \Omega _{-}\right) ,\text{
}u_{+}\in W^{2,p}\left( \Omega _{+}\right) ,\text{ }u_{\pm }=0\text{ on }
\Gamma _{\pm } \\ 
\text{and }u_{\pm }\text{ satisfies (\ref{TransmissionCdts})}
\end{array}
\right\}, \medskip \\ 
\mathcal{P}u=\left\{ 
\begin{array}{ll}
d_{-}\Delta u_{-}-r_{-}u_{-} & \text{in }\Omega _{-}, \\ 
d_{+}\Delta u_{+}-r_{+}u_{+} & \text{in }\Omega _{+},
\end{array}\right.
\end{array}\right.
\end{equation*}
where $p \in (1,+\infty)$. Note that the transmission conditions \eqref{TransmissionCdts} are well defined since, for example, if $u_{-}\in W^{2,p}(\Omega _{-})$, then 
\begin{equation*}
\frac{\partial u_{-}}{\partial x}|_{\left\{ 0\right\} \times (0,1)} \in W^{1-1/p,p}(0,1),
\end{equation*}
see \cite{Grisvard}, Corollary 1, p. 682.

Now, let us write the above parabolic equation in an abstract formulation.
First, let us introduce, in the Banach space $E_{0}=L^{p}\left( 0,1\right)$,
operator $A_{0}$ defined by
\begin{equation}
\left\{ 
\begin{array}{l}
D(A_{0})=\left\{ \varphi \in W^{2,p}(0,1):\varphi (0)=\varphi (1)=0\right\},
\\ 
\left( A_{0}\varphi \right) (y)=\varphi ^{\prime \prime }(y).
\end{array}
\right.  \label{DefinitionAzero}
\end{equation}
It is known that this operator is closed linear with a dense domain and
verifies the two following properties :
\begin{equation}
\exists \,C>0:\forall z\in S_{\pi -\varepsilon}\cup \left\{ 0\right\}
, \quad \left\Vert (zI-A_{0})^{-1}\right\Vert _{\mathcal{L}(E_{0})}\leqslant \dfrac{C}{
1+\left\vert z\right\vert },  \label{EllipticityOfA0}
\end{equation}
where
\begin{equation*}
S_{\pi -\varepsilon}=\left\{ z\in \mathbb{C}\backslash \left\{ 0\right\} :\left\vert \arg z\right\vert <\pi - \varepsilon\right\} ,
\end{equation*}
for any small $\varepsilon > 0$ such that $\pi -\varepsilon > 0$ and
there exists a ball $B(0,\delta )$, such that $\overline{B(0,\delta )} \subset \rho (A_{0})$ and the above estimate is still true in $S_{\pi -\varepsilon}\cup \overline{B(0,\delta )}$; and
\begin{equation}\label{BipA0}
\forall s\in \mathbb{R},~ \left( -A_{0}\right) ^{is}\in \mathcal{L}\left(E_{0}\right) ,~ \forall \,\varepsilon > 0 : \underset{s\in \mathbb{R}}{\sup }\left\Vert e^{-\varepsilon \left\vert s\right\vert }(-A_{0})^{is}\right\Vert _{\mathcal{L}\left( E_{0}\right) }<+\infty,
\end{equation}
see for instance the method used in \cite{labbas-moussaoui}, Proposition 3.1, p. 191.
\begin{remark}
The above problem can be considered in dimension $n$ by setting
$$\Omega = (-\ell, L)\times \varpi,$$
where $\varpi \subset \mathbb{R}^{n-1}$, $n>1$, is a bounded regular open set, $E_0 = L^p(\varpi)$ and 
\begin{equation*}
\left\{ 
\begin{array}{l}
D(A_{0})=\left\{ \varphi \in W^{2,p}(\varpi): \varphi|_{\partial \varpi} = 0\right\}, \\ 
A_{0}\varphi =\Delta_\varpi \varphi;
\end{array}\right.  
\end{equation*}
here $\Delta_\varpi$ denotes the Laplace operator related to variables on $\varpi$.
\end{remark}
We will also use the following usual operational notation of vector-valued
functions:
\begin{equation*}
u_{\pm }(t,x)(y):=u_{\pm }(t,x,y),\quad t\in (0,T),~ (x,y)\in \Omega _{\pm }.
\end{equation*}
So, we have to analyze the abstract Cauchy problem
\begin{equation}
\left\{ 
\begin{array}{l}
u^{\prime }(t)=\mathcal{S}_{0}u(t), \\ 
u(0)=u_{0},
\end{array}
\right.  \label{CauchyProblem}
\end{equation}
set in the Banach space $L^{p}(-\ell ,L;E_{0})$, where
$$\left\{\begin{array}{cll}
D\left( \mathcal{S}_{0}\right) &=&\left\{ 
\begin{array}{l}
w\in L^{p}(-\ell ,L;E_{0}):~w_{-}\in W^{2,p}(-\ell ,0;E_{0}),~w_{+}\in
W^{2,p}(0,L;E_{0}), \\ 
w_{-}\in L^p(-\ell ,0;D(A_{0})),~w_{+}\in L^p(0,L;D(A_{0})),  \\
w_{-}(-\ell )=0,~w_{+}(L)=0,~d_{-}w'_{-}(0)=q\left(w_{+}(0)-w_{-}(0)\right) \\ 
\text{and }d_{+}w_{+}^{\prime }(0)=q\left( w_{+}(0)-w_{-}(0)\right)
\end{array}\right\}, \medskip \\
\left( \mathcal{S}_{0}w\right) (x) &=&\left\{ 
\begin{array}{ll}
d_{-}w_{-}^{\prime \prime }(x)+d_{-}A_{0}w_{-}(x)-r_{-}w_{-}(x) & \text{in } (-\ell ,0), \\ 
d_{+}w_{+}^{\prime \prime }(x)+d_{+}A_{0}w_{+}(x)-r_{+}w_{+}(x) & \text{in } (0,L).
\end{array}\right.
\end{array}\right.$$
We then consider a more general operator $\mathcal{S}$ instead of $\mathcal{S}_{0}$ where $A_{0}$ is replaced by a closed linear operator $A$ in a Banach space $E$ satisfying some assumptions specified in Section \ref{Sect Assumptions and main results}.

Our method is essentially based on the use of abstract differential equations of elliptic type. The merit of this method lies above all in the fact of having the explicit formula of the resolvent operator of $\mathcal{S}$ (and therefore of $\mathcal{S}_0$), see Section \ref{Sect 6}.

This paper is organized as follows.

In Section \ref{Sect 2}, we only show that problem $-\mathcal{P}u=g$ can be studied in the variational framework. In Section \ref{Sect 3}, we recall some useful notions on sectorial operators. Section \ref{Sect Assumptions and main results} contains our assumptions and the main results. Section \ref{Sect 5} is devoted to the establishment of some useful properties on complex numbers. Section \ref{Sect 6} is composed of two subsections. In the first subsection, we explain the spectral equation of $\mathcal{S}$ which leads to an abstract system to be solved. Thus, we invert the determinant operator of the above system by using among others the $H^\infty$-calculus. This leads us to obtain the explicit solution of the spectral equation. Many of properties and techniques used in \cite{DoreLabbas} were useful to us in this work. Then, we study the optimal regularity of this solution. In the second subsection, we give some sharp estimates which lead us to analyze the behaviour of the resolvent operator of $\mathcal{S}$. We then obtain our main results which state among others that $\mathcal{S}$ generates an analytic semigroup in $L^p(-\ell,L;E)$, for $p \in (1,+\infty)$.

\section{Variational formulation of $-\mathcal{P}u=g$}\label{Sect 2}

Let $a,b \in \mathbb{R}$, with $a<b$. For any
\begin{equation*}
\begin{array}{cccl}
\varphi : & (a,b) & \longrightarrow & \mathbb{R} \\ 
& x & \longmapsto & \varphi \left( x\right) ,
\end{array}
\end{equation*}
we set 
\begin{equation*}
\begin{array}{cccl}
\widetilde{\varphi }: & \mathbb{R} & \longrightarrow & \mathbb{R} \\ 
& x & \longmapsto & \widetilde{\varphi }\left( x\right) =\left\{ 
\begin{array}{ll}
\varphi \left( x\right) & \text{for }x\in (a,b), \\ 
0 & \text{for }x\in \mathbb{R}\backslash (a,b),
\end{array}
\right.
\end{array}
\end{equation*}
and we define a subspace $\widetilde{H^{1/2}}\left( a,b\right) $ of $
H^{1/2}(a,b)$ by
\begin{equation*}
\widetilde{H^{1/2}}\left( a,b\right) =\left\{ \varphi \in H^{1/2}\left(
a,b\right) :\widetilde{\varphi }\in H^{1/2}\left( \mathbb{R}\right) \right\}.
\end{equation*}
In \cite{Lions Magenes}, the authors denoted this space by $
H_{00}^{1/2}\left( a,b\right) $ which also coincides with the following
particular interpolation space 
\begin{equation*}
\left( H_{0}^{1}(a,b),L^{2}(a,b)\right) _{1/2,2}.
\end{equation*}
The interpolation spaces are described, for instance, in \cite{Grisvard}.
 
Set 
\begin{equation*}
H_{\Gamma _{\pm }}^{1}\left( \Omega _{\pm }\right) =\left\{ u_{\pm }\in
H^{1}\left( \Omega _{\pm }\right) :u_{\pm \mid \Gamma _{\pm }}=0\right\} ;
\end{equation*}
for $v_{\pm }$ in $H_{\Gamma _{\pm }}^{1}\left( \Omega _{\pm }\right) $, it
is clear that $v_{\pm \mid \Gamma _{0}}$ is in $\widetilde{H^{1/2}}\left(
\Gamma _{0}\right) .$

Problem $-\mathcal{P}u=g$ writes in the form 
\begin{equation*}
\left\{ 
\begin{array}{rcll}
-\text{div}\left( d_{+}\nabla u_{+}\right) +r_{+}u_{+} & = & g_{+} & \text{in }\Omega _{+},\medskip \\ 
-\text{div}\left( d_{-}\nabla u_{-}\right) +r_{-}u_{-} & = & g_{-} & \text{in }\Omega _{-},\medskip \\ 
\displaystyle d_{+}\frac{\partial u_{+}}{\partial \nu } & = & q\left( u_{+}-u_{-}\right) & \text{on }\Gamma _{0},\medskip \\ 
\displaystyle d_{-}\frac{\partial u_{-}}{\partial \nu } & = & q\left( u_{+}-u_{-}\right) & \text{on }\Gamma _{0},\medskip \\ 
u_{\pm } & = & 0 & \text{on }\Gamma _{\pm },
\end{array}
\right.
\end{equation*}
where $\nu $ is the normal unit vector oriented towards the interior of $
\Omega _{+}$. The variational formulation is set in the hilbertian space
\begin{equation*}
\mathbb{V}=H_{\Gamma _{+}}^{1}\left( \Omega _{+}\right) \times H_{\Gamma
_{-}}^{1}\left( \Omega _{-}\right) ,
\end{equation*}
with
\begin{eqnarray*}
a\left( \left( u_{+},u_{-}\right) ,\left( w_{+},w_{-}\right) \right) &=&\displaystyle \int\limits_{\Omega _{+}}\left( d_{+}\nabla u_{+} \cdot \nabla w_{+} + r_{+}u_{+}w_{+}\right) dxdy \\
&&+\displaystyle \int\limits_{\Omega _{-}}\left(
d_{-}\nabla u_{-} \cdot \nabla w_{-} + r_{-}u_{-}w_{-}\right) dxdy,
\end{eqnarray*}
and
\begin{equation*}
\left\{ 
\begin{array}{l}
b\left( \left( u_{+},u_{-}\right) ,\left( w_{+},w_{-}\right) \right)
=\displaystyle \int\limits_{\Gamma _{0}}q\left( u_{+}-u_{-}\right) \left(
w_{+}-w_{-}\right) dy, \\ 
l\left( w_{+},w_{-}\right) =\displaystyle \int\limits_{\Omega
_{+}}g_{+}w_{+}dxdy+\displaystyle \int\limits_{\Omega _{-}}g_{-}w_{-}dxdy,
\end{array}
\right.
\end{equation*}
then
\begin{equation*}
a\left( \left( u_{+},u_{+}\right) ,\left( w_{+},w_{-}\right) \right)
+b\left( \left( u_{+},u_{-}\right) ,\left( w_{+},w_{-}\right) \right)
=l\left( w_{+},w_{-}\right) .
\end{equation*}
Now, taking $w_{+}$ in $\mathcal{D}\left( \Omega _{+}\right) $ and $w_{-}=0$
, we have in the sense of distributions 
\begin{equation*}
-\text{div}\left( d_{+}\nabla u_{+}\right) +r_{+}u_{+}=g_{+}\text{ in }
\Omega _{+}.
\end{equation*}
For $g_{+}$ in $L^{2}\left( \Omega _{+}\right) $, the trace of $\displaystyle d_{+}\frac{\partial u_{+}}{\partial \nu }$ on $\Gamma _{0}$, can be defined in the dual space $\left( \widetilde{H^{1/2}}\left( \Gamma _{0}\right) \right) ^{\prime}$ of $\widetilde{H^{1/2}}\left( \Gamma _{0}\right) $. In fact, operator $\partial /\partial \nu $ maps continuously from $H^{1}(0,1)$ into $L^{2}(0,1)$ and $L^{2}(0,1)$ into $H^{-1}(0,1)$, then, by interpolation it maps continuously from $\left( H^{1}(0,1);L^{2}(0,1)\right) _{1/2,2}$ into $\left(L^{2}(0,1),H^{-1}(0,1)\right) _{1/2,2}$; but we know that
\begin{equation*}
\left( H^{1}(0,1),L^{2}(0,1)\right) _{1/2,2}=H^{1/2}(0,1),
\end{equation*}
and
\begin{equation*}
\left( L^{2}(0,1),H^{-1}(0,1))\right) _{1/2,2}=\left[ \left(
H_{0}^{1}(0,1),L^{2}(0,1)\right) _{1/2,2}\right] ^{\prime } = \left( \widetilde{H^{1/2}}\left( a,b\right) \right) ^{\prime },
\end{equation*}
see \cite{Luc Tartar} p. 160.

The Green's formula for $w_{+}$ in $H_{\Gamma _{+}}^{1}\left( \Omega
_{+}\right) $ gives
\begin{eqnarray*}
\displaystyle \int_{\Omega _{+}}\left( d_{+}\nabla u_{+}.\nabla
w_{+}+r_{+}u_{+}w_{+}\right) dxdy &=& \displaystyle \int_{\Omega _{+}}\left( -\nabla .\left( d_{+}\nabla u_{+}\right)
w_{+}+r_{+}u_{+}w_{+}\right) dxdy \\
&&+\displaystyle \int_{\Gamma _{0}}\left( -d_{+}\frac{\partial u_{+}}{\partial \nu }\right) w_{+}dy,
\end{eqnarray*}
where the last integral means that
\begin{equation*}
\displaystyle \int_{\Gamma _{0}}\left( -d_{+}\frac{\partial u_{+}}{\partial \nu }\right) w_{+}dy := \left\langle d_{+}\frac{\partial u_{+}}{\partial \nu }
;w_{+}\right\rangle _{\left( \widetilde{H^{1/2}}\left( \Gamma _{0}\right)
\right) ^{^{\prime }}\times \left( \widetilde{H^{1/2}}\left( \Gamma
_{0}\right) \right) }.
\end{equation*}
Similarly, for 
\begin{equation*}
-\text{div}\left( d_{-}\nabla u_{-}\right) +r_{-}u_{-}=g_{-}\text{ in }\Omega _{-},
\end{equation*}
we obtain
\begin{eqnarray*}
\displaystyle \int_{\Omega _{-}}\left( d_{-}\nabla u_{-} \cdot \nabla w_{-} + r_{-}u_{-}w_{-}\right) dxdy &=&\displaystyle \int_{\Omega _{-}}\left( -\nabla \cdot \left( d_{-}\nabla u_{-}\right)
w_{-}+r_{-}u_{-}w_{-}\right) dxdy \\ 
&&+ \displaystyle \int_{\Gamma _{0}}\left( d_{-}\frac{
\partial u_{-}}{\partial \nu }\right) w_{-}dy,
\end{eqnarray*}
as above, the last integral means that
\begin{equation*}
\displaystyle \int_{\Gamma _{0}}\left( -d_{-}\frac{\partial u_{-}}{\partial \nu }\right)
w_{-}dy:=\left\langle d_{-}\frac{\partial u_{-}}{\partial \nu }
;w_{-}\right\rangle _{\left( \widetilde{H^{1/2}}\left( \Gamma _{0}\right)
\right) ^{^{\prime }}\times \left( \widetilde{H^{1/2}}\left( \Gamma
_{0}\right) \right) }.
\end{equation*}
It follows that
\begin{equation*}
\displaystyle \int_{\Gamma _{0}}\left( -d_{+}\frac{\partial u_{+}}{\partial \nu }\right)
w_{+}dy+\displaystyle \int_{\Gamma _{0}}\left( d_{-}\frac{\partial u_{-}}{\partial \nu }
\right) w_{-}dy+\displaystyle \int_{\Gamma _{0}}q\left( u_{+}-u_{-}\right) \left(
w_{+}-w_{-}\right) dy=0;
\end{equation*}
taking $w_{-}=0$, we deduce that 
\begin{equation*}
-d_{+}\frac{\partial u_{+}}{\partial \nu }+q\left( u_{+}-u_{-}\right) =0 \text{ in }\left( \widetilde{H^{1/2}}\left( \Gamma _{0}\right) \right)^{^{\prime }},
\end{equation*}
in the same way, $w_{+}=0$ gives
\begin{equation*}
d_{-}\frac{\partial u_{-}}{\partial \nu }-q\left( u_{+}-u_{-}\right) =0\text{ in }\left( \widetilde{H^{1/2}}\left( \Gamma _{0}\right) \right) ^{^{\prime}}.
\end{equation*}

\section{Recall on sectorial operators}\label{Sect 3}

Let $\omega \in \left[ 0,\pi \right]$. We put
\begin{equation}
S_{\omega }:=\left\{ 
\begin{array}{ll}
\left\{ z\in \mathbb{C} \backslash \left\{ 0\right\} :\left\vert \arg(z)\right\vert <\omega \right\} &
\text{if }\omega \in (0,\pi ], \medskip \\ 
(0,+\infty ) & \text{if }\omega =0.
\end{array}
\right.  \label{Scteur}
\end{equation}

Let us recall some known results from \cite{Haase}. 
\begin{definition}
Let $\omega \in \lbrack 0,\pi )$. A linear operator $\Lambda $ on a complex Banach space $E$ is called sectorial of angle $\omega $ if
\begin{enumerate}
\item $\sigma (\Lambda )\subset \overline{S_{\omega }}$ and

\item $M(\Lambda ,\omega ^{\prime }):=\underset{\lambda \in \mathbb{C} \backslash \overline{S_{\omega ^{\prime }}}}{\sup }\left\Vert \lambda (\Lambda -\lambda I)^{-1} \right\Vert <\infty $ \ for all $\omega' \in (\omega ,\pi )$.
\end{enumerate}

We then write: $\Lambda \in Sect(\omega )$. The following angle
\begin{equation*}
\omega _{\Lambda }:=\min \left\{ \omega \in \lbrack 0,\pi ):\Lambda \in
Sect(\omega )\right\} ,
\end{equation*}
is called the spectral angle of $\Lambda $. 
\end{definition}
We recall the following properties of the set $Sect(\omega )$. It is clear that Statement $2$.
implies necessarily that $\Lambda $ is closed.

\begin{proposition}\label{Prop sect}
If $\ (-\infty ,0)\subset \rho (\Lambda )$ and 
\begin{equation*}
M(\Lambda ):=M(\Lambda ,\pi ):=\underset{t>0}{\sup }\left\Vert t(\Lambda
+tI)^{-1}\right\Vert <\infty ,
\end{equation*}
then $M(\Lambda )\geqslant 1$ and 
\begin{equation*}
\Lambda \in Sect\left(\pi -\arcsin (1/M(\Lambda ))\right).
\end{equation*}
\end{proposition}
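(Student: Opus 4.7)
The plan is to exploit the Neumann series expansion of the resolvent around each point $-t$ of the negative real axis, where the norm bound $\|(\Lambda+tI)^{-1}\|\leq M(\Lambda)/t$ guarantees a disk of convergence of radius $t/M(\Lambda)$. The assertion will follow by showing that the union of these disks, taken over $t>0$, is exactly the open sector around the negative real axis of half-angle $\arcsin(1/M(\Lambda))$, together with tracking the explicit estimate this gives.

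First I would verify $M(\Lambda)\geq 1$. The resolvent identity yields, for $x\in D(\Lambda)$,
\begin{equation*}
t(\Lambda+tI)^{-1}x - x = -(\Lambda+tI)^{-1}\Lambda x,
\end{equation*}
whose norm is at most $(M(\Lambda)/t)\,\|\Lambda x\|\to 0$ as $t\to+\infty$. Hence $\|t(\Lambda+tI)^{-1}x\|\to\|x\|$ for each $x\in D(\Lambda)$, so $\sup_{t>0}\|t(\Lambda+tI)^{-1}\|\geq 1$. Next I set $M:=M(\Lambda)$ and write the Neumann expansion
\begin{equation*}
(\Lambda-\mu I)^{-1}=\sum_{k\geq 0}(\mu+t)^{k}(\Lambda+tI)^{-(k+1)},\qquad \text{valid when }|\mu+t|<t/M,
\end{equation*}
which both places $B(-t,t/M)$ in $\rho(\Lambda)$ and gives the bound
\begin{equation*}
\|(\Lambda-\mu I)^{-1}\|\leq \frac{M}{t-M|\mu+t|}.
\end{equation*}

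The geometric step, which I expect to be the main obstacle, is to identify $\bigcup_{t>0}B(-t,t/M)$ with $\mathbb{C}\setminus\overline{S_{\pi-\arcsin(1/M)}}$. Writing a candidate $z=-re^{i\phi}$ with $r>0$, the condition $|z+t|^{2}<t^{2}/M^{2}$ becomes the quadratic inequality in $t$
\begin{equation*}
t^{2}(1-1/M^{2})-2tr\cos\phi+r^{2}<0,
\end{equation*}
whose discriminant equals $4r^{2}(1/M^{2}-\sin^{2}\phi)$. It therefore admits a positive solution $t$ exactly when $|\sin\phi|<1/M$, i.e.\ $|\phi|<\arcsin(1/M)$, which is precisely the statement that $z\notin\overline{S_{\pi-\arcsin(1/M)}}$. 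This proves $\sigma(\Lambda)\subset\overline{S_{\pi-\arcsin(1/M)}}$.

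Finally, to obtain the resolvent bound defining $M(\Lambda,\omega')$ for arbitrary $\omega'\in(\pi-\arcsin(1/M),\pi)$, I would use the homogeneity of the construction: set $t=\lambda r$ with $\lambda>0$, so that $|z+t|=r|\lambda-e^{i\phi}|$ and the Neumann estimate becomes
\begin{equation*}
|z|\,\|(\Lambda-zI)^{-1}\|\leq \frac{M}{\lambda-M|\lambda-e^{i\phi}|},
\end{equation*}
which is independent of $r$. For $z\notin\overline{S_{\omega'}}$ the angle $\phi=\pi-\arg z$ stays in a compact subset of $(-\arcsin(1/M),\arcsin(1/M))$, so one can choose $\lambda=\lambda(\omega')$ (for instance, a value near $\cos\phi$ optimized uniformly over the admissible $\phi$) making the denominator positive and bounded away from zero, yielding the required uniform estimate and thus $\Lambda\in\mathrm{Sect}(\pi-\arcsin(1/M))$.
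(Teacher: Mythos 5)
Your argument is correct, and it is essentially the standard proof: the paper does not prove this proposition itself but cites Haase (pp.\ 80--81), where the same Neumann-series expansion of the resolvent around the points $-t$ and the same computation of the union of the disks $B(-t,t/M)$ are carried out. Two details worth tightening: the quadratic $t^{2}(1-1/M^{2})-2tr\cos\phi+r^{2}<0$ admits a \emph{positive} root not merely when the discriminant is positive but also when $\cos\phi>0$ (otherwise both roots are negative), which is automatic in the direction you need since $|\phi|<\arcsin(1/M)\leqslant\pi/2$; and in the last step the optimization you allude to can be made explicit by taking $\lambda=\cos\phi+|\sin\phi|/\sqrt{M^{2}-1}$, which yields the denominator $\cos\phi-\sqrt{M^{2}-1}\,|\sin\phi|$, positive exactly when $|\phi|<\arcsin(1/M)$ and bounded away from zero on the compact range $|\phi|\leqslant\pi-\omega'$, giving the required uniform estimate of $M(\Lambda,\omega')$.
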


\begin{proposition}
Let $\Lambda \in Sect(\omega_\Lambda)$ and $\nu \in (0,1/2]$. Then $\Lambda
^{\nu }\in Sect(\nu \omega _{\Lambda}),$ and therefore $-\Lambda ^{\nu }$
generates an analytic semigroup.
\end{proposition}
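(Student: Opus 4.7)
The plan is to construct $\Lambda^{\nu}$ via the Dunford--Riesz functional calculus and then verify the two defining properties of sectoriality. Fix $\omega \in (\omega_\Lambda, \pi)$, so that $\sigma(\Lambda)\subset \overline{S_\omega}$ with the uniform resolvent bound from Statement~2 of the definition. Since $\nu\in (0,1/2]$ and $\omega < \pi$, the principal branch $z\mapsto z^\nu$ is holomorphic on $\mathbb{C}\setminus(-\infty,0]\supset \overline{S_\omega}$, so one may define
$$
\Lambda^{\nu}\;=\;\frac{1}{2\pi i}\int_{\gamma} z^{\nu}\,(zI-\Lambda)^{-1}\,dz,
$$
where $\gamma$ is the boundary of a sector $S_{\omega''}$ with $\omega_\Lambda<\omega''<\omega$ (suitably indented near $0$), oriented so as to surround $\sigma(\Lambda)$ counterclockwise. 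The integral converges absolutely in $\mathcal{L}(E)$ because the resolvent decays like $|z|^{-1}$ on $\gamma$ while $|z^\nu|=|z|^\nu$ with $\nu\le 1/2$.

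First I would show $\sigma(\Lambda^\nu)\subset \overline{S_{\nu\omega_\Lambda}}$. By the spectral mapping theorem for the holomorphic functional calculus on sectorial operators (Haase), $\sigma(\Lambda^\nu)=\{z^\nu : z\in \sigma(\Lambda)\}$, and since raising to the $\nu$-th power maps $\overline{S_{\omega_\Lambda}}$ into $\overline{S_{\nu\omega_\Lambda}}$, the inclusion follows. The main work is the resolvent estimate: for any $\omega'\in(\nu\omega_\Lambda,\pi)$ and any $\lambda\notin\overline{S_{\omega'}}$, I would pick $\omega''\in(\omega_\Lambda,\omega'/\nu)$ so that the image $\{z^\nu:z\in\partial S_{\omega''}\}$ lies inside $\overline{S_{\nu\omega''}}\subset \overline{S_{\omega'}}$, hence stays away from $\lambda$. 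Then
$$
(\Lambda^{\nu}-\lambda I)^{-1}\;=\;\frac{1}{2\pi i}\int_{\gamma}\frac{(zI-\Lambda)^{-1}}{z^{\nu}-\lambda}\,dz,
$$
and after the scaling $z=|\lambda|^{1/\nu}\zeta$, the integral transforms into one of the same form with $\lambda$ replaced by $\lambda/|\lambda|$. Using $\|(zI-\Lambda)^{-1}\|\le C/|z|$ on $\gamma$ and the uniform lower bound $|\zeta^{\nu}-\lambda/|\lambda||\ge c>0$ for $\zeta\in\partial S_{\omega''}$ (thanks to the geometric separation above), one obtains
$$
\|\lambda(\Lambda^{\nu}-\lambda I)^{-1}\|\;\le\;M(\omega'),
$$
uniformly in $\lambda\notin\overline{S_{\omega'}}$. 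This is the step I expect to require the most care: one must show that the angular separation between $\lambda$ and $\{z^\nu : z\in \gamma\}$ can be made uniform in $\lambda$, which is exactly where the restriction $\nu\omega_\Lambda<\omega'$ is used.

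Combining the two items gives $\Lambda^{\nu}\in \mathrm{Sect}(\nu\omega_\Lambda)$. Finally, since $\omega_\Lambda<\pi$ and $\nu\le 1/2$, we have $\nu\omega_\Lambda<\pi/2$, so the spectral angle of $\Lambda^\nu$ is strictly less than $\pi/2$; by the classical Hille--Yosida/Lumer--Phillips characterisation of analytic semigroup generators in terms of sectoriality, $-\Lambda^{\nu}$ generates an analytic semigroup on $E$, as claimed.
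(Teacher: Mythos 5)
The paper does not actually prove this proposition; it simply quotes it and refers to Haase, pp.~80--81, so your attempt is to be measured against the standard argument there. Your overall strategy --- define $\Lambda^{\nu}$ by holomorphic functional calculus, get the spectral inclusion from the spectral mapping theorem, represent $(\Lambda^{\nu}-\lambda I)^{-1}$ as a contour integral and use a scaling $z=|\lambda|^{1/\nu}\zeta$ together with the angular separation between $\lambda$ and $\{z^{\nu}:z\in\gamma\}$ to get a bound uniform in $\lambda\notin\overline{S_{\omega'}}$ --- is exactly the standard route, and your final step (since $\nu\leqslant 1/2$ and $\omega_{\Lambda}<\pi$, the spectral angle $\nu\omega_{\Lambda}$ is below $\pi/2$, hence $-\Lambda^{\nu}$ generates an analytic semigroup) is correct.

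There is, however, a concrete false step at the outset: the integral $\frac{1}{2\pi i}\int_{\gamma}z^{\nu}(zI-\Lambda)^{-1}\,dz$ does \emph{not} converge absolutely. On the two rays of $\gamma=\partial S_{\omega''}$ the resolvent bound gives an integrand of size $C|z|^{\nu-1}$, and $\int_{1}^{\infty}r^{\nu-1}\,dr=+\infty$ for every $\nu>0$: the $|z|^{-1}$ decay of the resolvent exactly cancels one power of $|z|$ and leaves nothing to control $|z|^{\nu}$ at infinity (the integral is fine near $0$, so the obstruction is purely at infinity). As written, the operator you go on to estimate is therefore not defined. The repair --- and precisely the content of the pages of Haase that the paper cites --- is the \emph{extended} (regularized) functional calculus: for instance one sets $\Lambda^{\nu}=(I+\Lambda)\,f(\Lambda)$ with $f(z)=z^{\nu}(1+z)^{-1}$, whose Dunford integral does converge since the integrand is now $O(|z|^{\nu-2})$ at infinity and $O(|z|^{\nu-1})$ at $0$; equivalently one may use Balakrishnan's real-variable formula. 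A similar regularization is needed to justify your resolvent representation, because $z\mapsto(z^{\nu}-\lambda)^{-1}$ does not vanish at $z=0$ (it tends to $-1/\lambda$), so it is not in the class $H_{0}^{\infty}$ for which the elementary Dunford integral directly yields $g(\Lambda)$; one must either subtract the constant $-1/\lambda$ or regularize by $(1+z)^{-1}$ and invoke the multiplicativity of the extended calculus to identify the result with the resolvent of $\Lambda^{\nu}$. Once these points are supplied, your scaling argument and the spectral mapping theorem do give $\Lambda^{\nu}\in Sect(\nu\omega_{\Lambda})$; the skeleton is right, but the construction of the object itself is the missing ingredient.
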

For more details, see \cite{Haase}, p. 80-81.

\begin{definition}
We denote by BIP$(E,\theta)$ (see \cite{pruss-sohr}), the class of sectorial injective operators $T$, on the Banach space $E$, such that
\begin{itemize}
\item[] $i) \quad ~~\overline{D(T)} = \overline{R(T)} = E,$

\item[] $ii) \quad ~\forall~ s \in \mathbb{R}, \quad T^{is} \in \mathcal{L}(E),$

\item[] $iii) \quad \exists~ C \geq 0 ,~ \forall~ s \in \mathbb{R}, \quad ||T^{is}||_{\mathcal{L}(E)} \leq C e^{|s|\theta}$.
\end{itemize}
\end{definition}

\begin{definition}
A Banach space $E$ is a $UMD$ space if and only if for all $1<p<+\infty$ the Hilbert transform is continuous from $L^{p}(\mathbb{R};E)$ into
itself, see \cite{Bourgain} and \cite{Burkholder}.
\end{definition}
Now, let us recall some important result on the well-known functional calculus. To this end, we set 
\begin{equation*}
H^{\infty }(S_{\omega })=\left\{ f:f\text{ is an holomorphic and bounded
function on }S_{\omega }\right\} \text{,}
\end{equation*}
with $\omega \in (0,\pi )$; see for instance \cite{Haase}, p. 28.
\begin{definition}
Let $\Lambda$ be a closed linear densely defined operator in $E$. We say that $\Lambda$ has bounded $H^\infty(S_\omega)$ functional calculus if for every $f \in H^\infty(S_\omega)$ the operator $f(\Lambda)$ is bounded and there exists $C > 0$ (independent of $f$) such that 
$$\|f(\Lambda)\|_{\mathcal{L}(E)} \leqslant C \|f\|_\infty.$$
\end{definition}
\begin{proposition}\label{Prop Dore-Labbas calcul fonctionnel}
Let $\Lambda$ be an injective sectorial operator with dense range. If $f\in H^{\infty }(S_{\omega })$ is such that $1/f\in H^{\infty }(S_{\omega })$ and 
\begin{equation*}
(1/f)(\Lambda )\in \mathcal{L}(E),
\end{equation*}
then $f(\Lambda )$ is boundedly invertible and 
\begin{equation}
\left[ f(\Lambda )\right] ^{-1}=(1/f)(\Lambda ). \label{HinfiniCalcul}
\end{equation}
\end{proposition}
This result is proved in \cite{DoreLabbas} Proposition 3.3, p. 1873. For the definition of $f(\Lambda)$, see, for instance, section 3 on the functional calculus in \cite{DoreLabbas}, p. 1871-1872.

\section{Assumptions and main results}\label{Sect Assumptions and main results}

Let $A$ be a linear closed densely defined operator in a complex Banach space $E$ and assume in all this paper that
\begin{equation}\label{UMD}
E\text{ is a UMD space,} 
\end{equation}
\begin{equation}\label{0 dans rho de A}
0\in \rho(A)
\end{equation}
\begin{equation}\label{MoinsAbip}
\begin{array}{l}
-A \text{ is a sectorial operator and has bounded } H^\infty(S_\varepsilon) \\
\text{functional calculus for some fixed } \varepsilon \in (0,\pi/2).
\end{array}
\end{equation}
\begin{remark}
As a consequence, we have the two following results:
\begin{enumerate}
\item $-A \in$ BIP$(E,\varepsilon)$, see \cite{DoreLabbas}, p. 1876.

\item $\sqrt{-A}$ has bounded $H^\infty(S_{\varepsilon/2})$ functional calculus, see \cite{DoreLabbas}, Proposition 3.4, p. 1873.
\end{enumerate}
\end{remark}
\begin{remark}
In concrete examples, operator $-A$ represents, for instance, an elliptic operator set in some bounded regular domain in $L^p$-spaces.
\end{remark}
We define operator $\mathcal{S}$ by
$$\left\{\begin{array}{cll}
D\left( \mathcal{S}\right) &=&\left\{ 
\begin{array}{l}
w\in L^{p}(-\ell ,L;E):w_{-}\in W^{2,p}(-\ell ,0;E),w_{+}\in W^{2,p}(0,L;E)
\\ 
w_{-}(-\ell )=0,w_{+}(L)=0,d_{-}w_{-}^{\prime }(0)=q\left(
w_{+}(0)-w_{-}(0)\right) \text{ } \\ 
\text{and }d_{+}w_{+}^{\prime }(0)=q\left( w_{+}(0)-w_{-}(0)\right)
\end{array}
\right\}, \medskip \\
\left( \mathcal{S}w\right) (x) &=&\left\{ 
\begin{array}{ll}
d_{-}w_{-}''(x)+d_{-}Aw_{-}(x)-r_{-}w_{-}(x) & \text{in } (-\ell ,0), \\ 
d_{+}w_{+}''(x)+d_{+}Aw_{+}(x)-r_{+}w_{+}(x) & \text{in } (0,L) .
\end{array}\right.
\end{array}\right.$$

Thanks to the fact that the domain is cylindrical, we will give an explicit
expression of the resolvent operator of $\mathcal{S}$ by using essentially
the analytic semigroups theory and the functional calculus.

Therefore, our aim results are the following:

\begin{theorem}\label{Th général}
Assume that \eqref{UMD}, \eqref{0 dans rho de A} and \eqref{MoinsAbip} hold. Then, operator $\mathcal{S}$ generates an analytic semigroup in $L^{p}(-\ell ,L;E).$
\end{theorem}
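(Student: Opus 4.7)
The plan is to prove that (a suitable translate of) $\mathcal{S}$ is sectorial of spectral angle strictly less than $\pi/2$, which by the classical generation theorem for sectorial operators implies that $\mathcal{S}$ generates an analytic semigroup in $L^p(-\ell,L;E)$. Concretely, for $\lambda$ in a sector $S_\theta$ with $\theta > \pi/2$ and $|\lambda|$ sufficiently large, I construct $(\lambda I - \mathcal{S})^{-1}$ explicitly and show
\begin{equation*}
\|(\lambda I - \mathcal{S})^{-1}\|_{\mathcal{L}(L^p(-\ell,L;E))} \leq \frac{C}{1+|\lambda|}.
\end{equation*}
Thanks to the cylindrical structure, the spectral equation $\lambda w - \mathcal{S}w = f$ decouples on each subinterval into an abstract second-order ODE
\begin{equation*}
-w_\pm''(x) + M_\pm^2(\lambda)\, w_\pm(x) = \frac{1}{d_\pm}f_\pm(x), \qquad M_\pm^2(\lambda) := \frac{r_\pm + \lambda}{d_\pm}I - A,
\end{equation*}
the two pieces being coupled only through the Dirichlet conditions at $x = -\ell, L$ and the transmission conditions at $x = 0$.

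To solve each ODE I would first construct the square root $M_\pm(\lambda) = \sqrt{M_\pm^2(\lambda)}$. Assumptions \eqref{EllipticityOfA} and \eqref{MoinsAbip} on $A$, together with the shift by $(r_\pm+\lambda)/d_\pm$, place $M_\pm^2(\lambda)$ in the BIP class with spectral angle as small as desired, so that $M_\pm(\lambda) \in \text{BIP}(E,\varepsilon')$ and $-M_\pm(\lambda)$ generates a bounded analytic semigroup $\{e^{-tM_\pm(\lambda)}\}_{t \geq 0}$. The general solution on $(-\ell,0)$ can then be written as
\begin{equation*}
w_-(x) = e^{-(-x)M_-(\lambda)}\alpha_- + e^{-(x+\ell)M_-(\lambda)}\beta_- + W_-(x),
\end{equation*}
where both exponents are nonnegative for $x \in (-\ell, 0)$ and $W_-$ is a Duhamel particular part; an analogous formula holds on $(0,L)$ with unknowns $\gamma_+, \delta_+$. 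The Dirichlet conditions $w_-(-\ell) = 0$ and $w_+(L) = 0$ eliminate $\beta_-$ and $\delta_+$, leaving two unknowns.

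The two transmission conditions at $x = 0$ then reduce to a $2 \times 2$ operator-valued system in $\alpha_-, \gamma_+$, whose determinant has the schematic form
\begin{equation*}
\Delta(\lambda) = d_- d_+\, M_-(\lambda) M_+(\lambda) + q\bigl(d_- M_-(\lambda) + d_+ M_+(\lambda)\bigr) + R(\lambda),
\end{equation*}
with $R(\lambda)$ collecting contributions involving the factors $e^{-\ell M_-(\lambda)}$ and $e^{-L M_+(\lambda)}$. \emph{The main obstacle is to invert $\Delta(\lambda)$ with a favourable uniform bound.} I would represent $\Delta(\lambda)$ as $f(-A)$ for a suitable holomorphic $f \in H^\infty(S_\omega)$, verify that $1/f$ also lies in $H^\infty(S_\omega)$, and invoke the proposition containing identity \eqref{HinfiniCalcul} (from \cite{DoreLabbas}) to obtain $\Delta(\lambda)^{-1} = (1/f)(-A) \in \mathcal{L}(E)$. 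The remainders in $R(\lambda)$ decay in operator norm as $|\lambda| \to \infty$, so they are absorbed by a Neumann-series perturbation argument for $|\lambda|$ large enough.

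Once $\Delta(\lambda)^{-1}$ is in hand, $(\lambda I - \mathcal{S})^{-1}$ admits an explicit expression in terms of the semigroups $e^{-tM_\pm(\lambda)}$, the inverse determinant, and Duhamel integrals. Establishing the $1/|\lambda|$ decay rests on two inputs: the Dore--Venni theorem in the UMD space $E$ (using \eqref{UMD}--\eqref{MoinsAbip} and the BIP property of $M_\pm^2(\lambda)$) yields maximal $L^p$-regularity for $-\partial_x^2 + M_\pm^2(\lambda)$ on each subinterval, controlling the Duhamel parts $W_\pm$ to order $1/|\lambda|$; while sharp norm estimates for the analytic semigroups $e^{-tM_\pm(\lambda)}$ combined with the bound on $\Delta(\lambda)^{-1}$ control the boundary contributions carried by $\alpha_-, \gamma_+$. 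Assembling these estimates yields the sectorial resolvent bound and hence Theorem \ref{Th général}.
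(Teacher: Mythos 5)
Your proposal is correct in outline and follows the same architecture as the paper: reduction to abstract second--order ODEs on each subinterval, square roots $M_\pm(\lambda)$ of the shifted operators (the paper's $B_\pm=-\sqrt{-A+\rho_\pm I+\lambda_\pm I}$ are your $-M_\pm(\lambda)$), elimination of two constants by the Dirichlet conditions, a $2\times 2$ operator system at the interface, inversion of its determinant, and then a term-by-term resolvent estimate combining Dore--Venni with sharp semigroup bounds. The one place you genuinely deviate is the treatment of the determinant. You restrict to $|\lambda|$ large and absorb everything by a Neumann series; this works, because after normalising by $(I+e^{2\ell B_-})(I+e^{2LB_+})$ the determinant takes the form $I-T(\lambda)$ with $\|T(\lambda)\|_{\mathcal{L}(E)}\leqslant C|\lambda|^{-1/2}$ (using $\|B_\pm^{-1}\|\leqslant C|\lambda|^{-1/2}$ and the uniform invertibility of $I+e^{2LB_\pm}$), and a resolvent bound $C/|\lambda|$ on a sector of angle $>\pi/2$ for large $|\lambda|$ suffices for generation of an analytic semigroup. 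What you lose is the content of the paper's hardest step, Proposition~\ref{Prop f>0} together with the argument-tracking Propositions~\ref{Prop arg}--\ref{Prop ineg arg} and Corollary~\ref{Cor DoreLabbas 2}: these establish the lower bound $|f(z)|>\sin(\varepsilon_0/2)$ uniformly for \emph{all} $\lambda\in S_{\pi-\varepsilon_0}\cup\{0\}$, so the paper obtains the estimate $\|(\mathcal{S}-\lambda I)^{-1}\|\leqslant C/|\lambda|$ on the whole sector down to $\lambda=0$, hence in particular invertibility of $\mathcal{S}$ and a bounded analytic semigroup, which your large-$|\lambda|$ version does not give. Two small imprecisions to fix: the claim that the shift places $M_\pm^2(\lambda)$ in BIP ``with spectral angle as small as desired'' is false when $\arg\lambda$ is close to $\pi$ --- Monniaux's perturbation theorem only gives BIP$(E,\pi-\varepsilon_0)$, and it is only after taking the square root that one gets angle $(\pi-\varepsilon_0)/2<\pi/2$, which is what Dore--Venni and the generation of the semigroups $e^{-tM_\pm(\lambda)}$ actually require; and the Dirichlet conditions at $x=-\ell,L$ do not couple the two pieces, only the transmission conditions at $x=0$ do.
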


As corollaries, we obtain.

\begin{theorem}
Operator $\mathcal{S}_{0}$ generates an analytic semigroup in $L^{p}(-\ell
,L;E_{0}).$
\end{theorem}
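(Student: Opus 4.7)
The plan is to apply Theorem \ref{Th général} to the concrete choice $(E,A) = (E_0, A_0)$, which reduces the corollary to verifying that the three abstract hypotheses \eqref{UMD}, \eqref{EllipticityOfA} and \eqref{MoinsAbip} are satisfied in this setting, and then identifying the generator so obtained with $\mathcal{S}_0$.

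For \eqref{UMD}, $E_0 = L^p(0,1)$ with $p \in (1,+\infty)$ is a standard UMD space: boundedness of the Hilbert transform on $L^p(\mathbb{R};L^p(0,1)) \simeq L^p(\mathbb{R}\times(0,1))$ follows from the scalar $L^p$-boundedness via Fubini (this is exactly the setting of Bourgain and Burkholder already cited in Section \ref{Sect 3}). For \eqref{EllipticityOfA}, I would point out that \eqref{EllipticityOfA0} already yields the spectral inclusion $\sigma(A_0) \subset (-\infty,0)$ (classical for the one-dimensional Dirichlet Laplacian) and, for every $\theta \in (0,\pi)$, choosing $\varepsilon = \pi-\theta > 0$ gives $S_\theta \subset S_{\pi-\varepsilon}$, on which $\|(\lambda I - A_0)^{-1}\|_{\mathcal{L}(E_0)} \leqslant C/(1+|\lambda|)$ immediately provides $\sup_{\lambda \in S_\theta} \|\lambda(\lambda I-A_0)^{-1}\|_{\mathcal{L}(E_0)} < +\infty$. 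For \eqref{MoinsAbip}, the estimate in \eqref{BipA0} rearranges to $\|(-A_0)^{is}\|_{\mathcal{L}(E_0)} \leqslant C e^{\varepsilon|s|}$ for every $\varepsilon > 0$, while injectivity, dense range and dense domain of $-A_0$ follow from $0 \in \rho(A_0)$ and the explicit characterization of $D(A_0)$.

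Once the three hypotheses are in hand, Theorem \ref{Th général} produces an analytic semigroup on $L^p(-\ell,L;E_0)$ generated by the operator $\mathcal{S}$ constructed from $(E_0,A_0)$. The remaining step, and the only one requiring real care, is to check that this operator coincides with $\mathcal{S}_0$: the differential expressions are manifestly identical and the boundary and interface conditions are the same, so only the additional regularity requirement $w_\pm \in W^{2,p}(\cdot;D(A_0))$ appearing in $D(\mathcal{S}_0)$ must be matched. This is delivered by the $L^p$-maximal regularity built into the proof of the main theorem: the Dore--Venni/$H^\infty$-calculus machinery used in Section \ref{Sect 6} shows that once $w''$ and $A_0 w$ both belong to $L^p(\cdot;E_0)$ with the prescribed traces, $w$ automatically lies in $W^{2,p}(\cdot;D(A_0))$ via the classical elliptic estimate $\|w''\|_{L^p} + \|A_0 w\|_{L^p} \sim \|w\|_{W^{2,p}(\cdot;E_0)} + \|w\|_{L^p(\cdot;D(A_0))}$.

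The anticipated main obstacle is precisely this last identification: I would have to trace carefully through the construction of the resolvent in Section \ref{Sect 6} to make sure the domain of the generator produced by Theorem \ref{Th général} matches the explicit domain of $\mathcal{S}_0$ verbatim. Modulo this bookkeeping step, which is a standard consequence of the functional-calculus approach, the corollary is a direct specialization of the main theorem to the concrete Laplace--Dirichlet operator $A_0$ on $L^p(0,1)$.
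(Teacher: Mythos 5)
Your proposal is correct and follows exactly the route the paper intends: the corollary is stated as an immediate specialization of Theorem~\ref{Th général} to $(E,A)=(E_0,A_0)$, with the three hypotheses \eqref{UMD}, \eqref{EllipticityOfA} and \eqref{MoinsAbip} already supplied by the UMD property of $L^p(0,1)$ and by \eqref{EllipticityOfA0}--\eqref{BipA0}. The only caveat in your final identification step is that the machinery of Section~\ref{Sect 6} yields $w_\pm \in W^{2,p}(\cdot;E_0)\cap L^{p}(\cdot;D(A_0))$ (as in Proposition~\ref{Prop Existence-Unicité w}), not the literally stronger $W^{2,p}(\cdot;D(A_0))$ written in $D(\mathcal{S}_0)$, which appears to be a misprint for $L^{p}(\cdot;D(A_0))$; with that reading your bookkeeping goes through verbatim.
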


\begin{theorem}
Operator $\mathcal{P}$ generates an analytic semigroup in $L^{p}(\Omega ).$
\end{theorem}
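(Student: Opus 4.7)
My plan is to reduce this statement to the preceding corollary about $\mathcal{S}_{0}$ via the canonical Fubini-type isometric isomorphism
\begin{equation*}
J:L^{p}(-\ell,L;E_{0})\longrightarrow L^{p}(\Omega),\qquad (Jw)(x,y):=w(x)(y),
\end{equation*}
which is legitimate because $E_{0}=L^{p}(0,1)$ and $\Omega_{-}\cup\Omega_{+}$ coincides with the rectangle $(-\ell,L)\times(0,1)$ up to the negligible interface $\Gamma_{0}$.

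The first step is to confirm that $A_{0}$ on $E_{0}$ fulfills hypotheses \eqref{UMD}, \eqref{EllipticityOfA} and \eqref{MoinsAbip}: $E_{0}=L^{p}(0,1)$ is a UMD space for every $p\in(1,+\infty)$; the resolvent estimate \eqref{EllipticityOfA0} is exactly the sectoriality required in \eqref{EllipticityOfA}; and \eqref{BipA0} says precisely that $-A_{0}\in \text{BIP}(E_{0},\varepsilon)$ for every $\varepsilon>0$, which is \eqref{MoinsAbip}. Applying \refT{Th général} with $A=A_{0}$ therefore produces the intermediate corollary: $\mathcal{S}_{0}$ generates an analytic semigroup on $L^{p}(-\ell,L;E_{0})$.

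It then remains to check that $J$ conjugates $\mathcal{S}_{0}$ to $\mathcal{P}$, i.e.\ $JD(\mathcal{S}_{0})=D(\mathcal{P})$ and $J\mathcal{S}_{0}=\mathcal{P}J$. For the action, a direct computation shows that $w_{\pm}''$ and $A_{0}w_{\pm}$ are transported to $\partial^{2}u_{\pm}/\partial x^{2}$ and $\partial^{2}u_{\pm}/\partial y^{2}$ respectively, so $\mathcal{S}_{0}w$ becomes $d_{\pm}\Delta u_{\pm}-r_{\pm}u_{\pm}$, which is $\mathcal{P}u$. For the domain, the mixed-regularity conditions $w_{\pm}\in W^{2,p}(\cdot;E_{0})\cap W^{2,p}(\cdot;D(A_{0}))$ identify $u_{\pm}$ with elements of $W^{2,p}(\Omega_{\pm})$ through the classical description of $W^{2,p}$ on a rectangle as an anisotropic intersection; the pointwise requirement $w_{\pm}(x)\in D(A_{0})$ furnishes $u_{\pm}(x,0)=u_{\pm}(x,1)=0$; combined with $w_{-}(-\ell)=0$ and $w_{+}(L)=0$ this delivers $u_{\pm}=0$ on $\Gamma_{\pm}$; and the two interface equalities $d_{\pm}w_{\pm}'(0)=q(w_{+}(0)-w_{-}(0))$ are exactly \eqref{TransmissionCdts} once read as traces on $\Gamma_{0}$.

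The main obstacle I anticipate lies precisely in this last identification, namely the rigorous proof that the vector-valued anisotropic Sobolev class appearing in $D(\mathcal{S}_{0})$ coincides with the scalar space $W^{2,p}(\Omega_{\pm})$ equipped with the prescribed boundary and interface conditions. This requires in particular $L^{p}$-control on the mixed derivative $\partial^{2}u_{\pm}/\partial x\partial y$, typically obtained in this UMD/BIP framework by a Dore--Venni style maximal-regularity argument on the rectangle, exactly of the kind already invoked in the proof of \refT{Th général}. Once this identification is secured, the generator property transfers through the isometry $J$, and $\mathcal{P}$ therefore generates an analytic semigroup on $L^{p}(\Omega)$.
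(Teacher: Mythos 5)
Your proposal is correct and follows exactly the route the paper intends: the paper states this result as an immediate corollary of Theorem~\ref{Th général} applied with $A=A_{0}$ (whose hypotheses are guaranteed by \eqref{EllipticityOfA0}, \eqref{BipA0} and the UMD property of $L^{p}(0,1)$), followed by the canonical identification of $L^{p}(-\ell,L;E_{0})$ with $L^{p}(\Omega)$ and of $\mathcal{S}_{0}$ with $\mathcal{P}$, and it supplies no further argument. You have correctly isolated the one substantive point the paper leaves implicit, namely the identification of the anisotropic vector-valued domain with $W^{2,p}(\Omega_{\pm})$ via control of the mixed derivative, which in this UMD/BIP setting is indeed furnished by the Dore--Venni closedness-of-the-sum machinery already at work in the proof of Theorem~\ref{Th général}.
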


\section{Preliminary results}\label{Sect 5}

In this section, we recall some useful results and we state some technical results.

\begin{proposition}\label{Prop arg}
Let $c \in \mathbb{R} \setminus \{0\}$ and $z\in \mathbb{C}\setminus \mathbb{R}$. Then, we have
$$\left\{\begin{array}{cccccccl}
0 &<& |\arg(z+c)| &<& |\arg(z)|   &<& \pi & \text{if } c > 0, \\ \ecart
0 &<& |\arg(z)|   &<& |\arg(z+c)| &<& \pi & \text{if } c < 0.
\end{array}\right.$$
\end{proposition}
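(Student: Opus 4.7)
My plan is to reduce the problem to the upper half-plane by complex conjugation and then exploit a single uniform formula for $\arg$ that is manifestly strictly monotone in the real part.

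First I would observe that if $z \in \mathbb{C}\setminus\mathbb{R}$ then either $\text{Im}(z) > 0$ or $\text{Im}(z) < 0$, and in the second case $\overline{z+c} = \bar z + c$ with $\text{Im}(\bar z) > 0$, while $|\arg(w)| = |\arg(\bar w)|$ for every $w \in \mathbb{C}\setminus (-\infty,0]$ (and both sides of all four inequalities involve $|\arg|$). So it suffices to treat the case $y := \text{Im}(z) > 0$, in which $\arg(z) \in (0,\pi)$ and $|\arg(z)| = \arg(z)$, and similarly for $z+c$ since $\text{Im}(z+c) = y > 0$.

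The key identity is that, for any $y > 0$ and any $x \in \mathbb{R}$,
\begin{equation*}
\arg(x+iy) \;=\; \frac{\pi}{2} - \arctan\!\left(\frac{x}{y}\right),
\end{equation*}
which is easily checked by considering the three sub-cases $x>0$, $x=0$, $x<0$ and comparing with the standard branch of $\arg$ on the upper half-plane. Since $\arctan$ is strictly increasing on $\mathbb{R}$, the map $x \mapsto \arg(x+iy)$ is strictly decreasing, with values in $(0,\pi)$.

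Applied with $z = x+iy$: if $c > 0$, then $x + c > x$, hence
\begin{equation*}
0 \;<\; \arg(z+c) \;=\; \tfrac{\pi}{2} - \arctan\!\tfrac{x+c}{y} \;<\; \tfrac{\pi}{2} - \arctan\!\tfrac{x}{y} \;=\; \arg(z) \;<\; \pi,
\end{equation*}
which is the first claim; if $c < 0$, the same computation with the inequality reversed gives the second claim. The obstacles here are purely notational (keeping track of the branch of $\arg$ and of the absolute values), so no step is genuinely hard once the conjugation reduction and the formula above are in place.
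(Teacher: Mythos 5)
Your proof is correct. It takes a mildly different route from the paper's: the paper uses the half-angle identity $\arg(z) = 2\arctan\bigl(\mathrm{Im}(z)/(\mathrm{Re}(z)+|z|)\bigr)$, keeps both half-planes together via the absolute values, and obtains the strict inequality from a (strict) triangle-inequality comparison of the denominators $\mathrm{Re}(z)+|z|$ and $\mathrm{Re}(z)+c+|z+c|$; you instead reduce to the upper half-plane by conjugation and use the formula $\arg(x+iy)=\tfrac{\pi}{2}-\arctan(x/y)$, so that the whole statement becomes strict monotonicity of $x\mapsto\arctan(x/y)$ at fixed $y=\mathrm{Im}(z)$. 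Your version is arguably cleaner, since the imaginary part is literally unchanged under $z\mapsto z+c$ and the monotonicity in the real part is immediate, whereas the paper's denominator manipulation hides a small appeal to strictness of the triangle inequality (valid because $z\notin\mathbb{R}$). What the paper's formula buys in exchange is reusability: the same identity \eqref{Def arg} is the workhorse of the subsequent Proposition~\ref{Prop ineg arg}, where the sign of the imaginary part matters and the conjugation trick would be less convenient. Both arguments are complete; just make sure, as you did, to note that $z$ and $z+c$ both avoid $(-\infty,0]$ so that $|\arg(\bar w)|=|\arg(w)|$ applies to both.
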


\begin{proof}
For $z\in \mathbb{C}\setminus \mathbb{R}$, we have
\begin{equation}\label{Def arg}
\arg(z) = 2 \arctan\left(\frac{\text{Im}(z)}{\text{Re}(z) + |z|}\right),
\end{equation}
and 
\begin{enumerate}
\item if $c > 0$, then 
$$\begin{array}{lll}
\dis \left|\arg(z)\right| & = & \dis 2 \arctan \left(\frac{\left|\text{Im}(z+c)\right|}{\text{Re}(z) + |z+c-c|}\right) \\ \ecart
& > & \dis 2 \arctan\left(\frac{\left|\text{Im}(z+c)\right|}{\text{Re}(z)+c + |z+c|}\right) \\ \ecart
& = & \dis \left|\arg(z+c)\right|.
\end{array}$$

\item if $c < 0$, then 
$$\begin{array}{lll}
\dis\left|\arg(z)\right| & = &\dis 2 \arctan\left(\frac{\left|\text{Im}(z+c)\right|}{\text{Re}(z) + c - c + |z|}\right) \\ \ecart
& = & \dis 2 \arctan\left(\frac{\left|\text{Im}(z+c)\right|}{\text{Re}(z) + c + |c| + |z|}\right) \\ \ecart
& < & \dis 2 \arctan\left(\frac{\left|\text{Im}(z+c)\right|}{\text{Re}(z) + c + |z+c|}\right) \\ \ecart
& = & \dis \left|\arg(z+c)\right|.
\end{array}$$
\end{enumerate}
\end{proof}
\begin{proposition}\label{Prop ineg arg}
Let $z_1,z_2 \in \mathbb{C}\setminus\{0\}$. Assume that $z_1+z_2 \neq 0$ and $\left|\arg(z_1) - \arg(z_2)\right| \leqslant \pi$. Then, we have
$$\min\left(\arg(z_1), \arg(z_2)\right) \leqslant \arg(z_1+z_2) \leqslant \max\left(\arg(z_1),\arg(z_2)\right).$$
\end{proposition}
\begin{proof}
Without loss of generality, it suffices to consider only the case where $\arg(z_1) \leqslant \arg(z_2)$.

\begin{enumerate}
\item Assume that $\arg(z_2) = \pi$. 

If $0 < \arg(z_1) < \pi$, from Proposition \ref{Prop arg}, we obtain the expected inequalities. 

If $\arg(z_1) = \arg(z_2) = \pi$, then 
\begin{equation}\label{eq egalité arg}
\arg(z_1) = \arg(z_1+z_2) = \arg(z_2).
\end{equation}
If $\arg(z_1) = 0$, then the expected inequalities hold since we have
$$\arg(z_1+z_2) = \left\{\begin{array}{ll}
\arg(z_1) & \text{if } |z_1|>|z_2|, \\
\arg(z_2) & \text{if } |z_2|>|z_1|.
\end{array}\right.$$

\item Assume that $\arg(z_2) = -\pi$, then $\arg(z_1) = \arg(z_2) = -\pi$ and \eqref{eq egalité arg} holds.

\item Assume that $\arg(z_2) \in (-\pi,0]$. Then, $\arg(z_1) \in (-\pi,\arg(z_2)]$, Im$(z_2)\leqslant 0$ and Im$(z_1) \leqslant 0$. From \eqref{Def arg}, we have
$$\frac{\text{Im}(z_1)}{\text{Re}(z_1) + |z_1|} \leqslant \frac{\text{Im}(z_2)}{\text{Re}(z_2) + |z_2|}.$$
Hence
$$\text{Im}(z_1)\left(\text{Re}(z_2) + |z_2|\right) + \text{Im}(z_2) \left(\text{Re}(z_2) + |z_2|\right) \leqslant \text{Im}(z_2) \left(\text{Re}(z_1) + |z_1|\right) + \text{Im}(z_2) \left(\text{Re}(z_2) + |z_2|\right),$$
wich gives
$$\text{Im}(z_1+z_2)\left(\text{Re}(z_2) + |z_2|\right) \leqslant \text{Im}(z_2) \left(\text{Re}(z_1+z_2) + |z_1| + |z_2|\right).$$
Since Im($z_2) \leqslant 0$, then we have 
$$\text{Im}(z_2)\left(|z_1|+|z_2|\right) \leqslant \text{Im}(z_2)|z_1+z_2|.$$
Therefore
$$\text{Im}(z_1+z_2)\left(\text{Re}(z_2) + |z_2|\right) \leqslant \text{Im}(z_2) \left(\text{Re}(z_1+z_2) + |z_1 + z_2|\right),$$
and due to \eqref{Def arg}, we obtain 
$$\arg(z_1+z_2) \leqslant \arg(z_2).$$
We extend the above result, by rotation, for $\arg(z_2) \in (-\pi,\pi)$.

\item Similarly, for $\arg(z_1)$, $\arg(z_2) \in [0,\pi)$, following the same steps, we deduce that
$$\arg(z_1) \leqslant \arg(z_1+z_2),$$
and, by rotation, we obtain the expected result.
\end{enumerate}
\end{proof}

\begin{proposition}\label{Prop DoreLabbas}
Let $z_1,z_2 \in \mathbb{C}\setminus\{0\}$. We have
$$|z_1+z_2| \geqslant \left(|z_1|+|z_2|\right)\left|\cos\left(\frac{\arg(z_1)-\arg(z_2)}{2}\right)\right|.$$
\end{proposition}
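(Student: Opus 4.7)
The plan is to reduce the inequality to a single algebraic identity by squaring and writing both complex numbers in polar form. Set $a=|z_1|$, $b=|z_2|$, $\theta_j = \arg(z_j)$, and $\alpha = \theta_1-\theta_2$. Then by the definition of argument and modulus,
$$|z_1+z_2|^2 = a^2 + b^2 + 2ab\cos\alpha.$$
The right-hand side, once squared, becomes $(a+b)^2\cos^2(\alpha/2)$, and the half-angle identity $\cos^2(\alpha/2) = (1+\cos\alpha)/2$ turns it into
$$\frac{(a+b)^2\,(1+\cos\alpha)}{2}.$$

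The core of the proof is then to observe that the difference of the two expressions is manifestly nonnegative. Explicitly, I would compute
$$|z_1+z_2|^2 - \frac{(a+b)^2(1+\cos\alpha)}{2} = \frac{1-\cos\alpha}{2}\,(a-b)^2,$$
which is a direct expansion: the $ab$ terms combine to give $2ab\cos\alpha - ab(1+\cos\alpha) = ab(\cos\alpha - 1)$, while the $a^2+b^2$ terms contribute $(a^2+b^2)(1-\cos\alpha)/2$, and the two combine into $(1-\cos\alpha)(a-b)^2/2$. Since $\cos\alpha \leqslant 1$, this difference is nonnegative, yielding
$$|z_1+z_2|^2 \geqslant (a+b)^2 \cos^2(\alpha/2),$$
and taking the square root (valid since both sides are nonnegative) gives exactly the claimed bound with $|\cos(\alpha/2)|$ on the right.

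There is no real obstacle here: the argument is a one-line half-angle identity followed by an elementary algebraic rearrangement. The only small care needed is to carry $|\cos(\alpha/2)|$ through the square root, since $\alpha = \arg z_1 - \arg z_2$ ranges over $(-2\pi,2\pi)$ and so $\alpha/2 \in (-\pi,\pi)$, where the cosine may change sign; but squaring removes this issue entirely during the derivation, and the absolute value is reinstated only at the final step.
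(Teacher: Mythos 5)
Your proof is correct: the identity $|z_1+z_2|^2=|z_1|^2+|z_2|^2+2|z_1||z_2|\cos(\arg z_1-\arg z_2)$, the half-angle formula, and the algebraic rearrangement into $\tfrac{1-\cos\alpha}{2}(|z_1|-|z_2|)^2\geqslant 0$ all check out, and the care with the absolute value when extracting the square root is exactly right. Note that the paper does not prove this statement at all --- it simply cites Proposition 4.9 of Dore, Favini, Labbas and Lemrabet --- so there is no internal proof to compare against; your two-line, self-contained verification is a perfectly good (and arguably preferable) substitute for the citation.
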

This result is given by Proposition 4.9, p. 1879 in \cite{DoreLabbas}.

\begin{proposition}\label{Prop DoreLabbas 2}
Let $0 < \alpha < \pi/2$ and $z \in S_\alpha$. We have
\begin{enumerate}
\item $\left|\arg\left(1 - e^{-z}\right) - \arg\left(1 + e^{-z}\right)\right| < \alpha.$

\item $\left|1 + e^{-z} \right| \geqslant 1 - e^{-\pi/(2 \tan(\alpha))}.$

\item $\dfrac{|z| \cos(\alpha)}{1+|z| \cos(\alpha)} \leqslant |1 - e^{-z}| \leqslant \dfrac{2|z|}{1+|z|\cos(\alpha)}.$
\end{enumerate}
\end{proposition}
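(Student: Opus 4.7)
The plan is to write $z = x + iy \in S_\alpha$ with $x>0$ and $|y| \le x \tan\alpha$, both of which follow from $|\arg z| < \alpha < \pi/2$, and then handle each of the three items by direct computation on the real and imaginary parts of $1 \pm e^{-z}$.

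For Part~1, the key observation is the algebraic identity
\[
\frac{1-e^{-z}}{1+e^{-z}} \;=\; \tanh(z/2) \;=\; \frac{\sinh x + i \sin y}{\cosh x + \cos y},
\]
whose denominator is strictly positive since $\cosh x > 1$ for $x>0$. A quick check that $\mathrm{Re}(1 \pm e^{-z}) > 0$ shows the arguments of $1 \pm e^{-z}$ both lie in $(-\pi/2,\pi/2)$, so their difference equals $\arg \tanh(z/2) = \arctan(\sin y/\sinh x)$ literally (not just modulo $2\pi$). The standard inequalities $|\sin y| \le |y|$ and $\sinh x \ge x$ then yield $|\sin y/\sinh x| \le |y|/x$, and hence the difference is bounded in absolute value by $\arctan(|y|/x)=|\arg z|<\alpha$.

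For Part~2, I would start from the identity
\[
|1+e^{-z}|^2 \;=\; 1 + 2e^{-x}\cos y + e^{-2x} \;=\; (1-e^{-x})^2 + 2 e^{-x}(1+\cos y),
\]
and split into cases. If $x \ge \pi/(2\tan\alpha)$, dropping the nonnegative second term gives $|1+e^{-z}| \ge 1-e^{-x} \ge 1-e^{-\pi/(2\tan\alpha)}$. If instead $x < \pi/(2\tan\alpha)$, then $|y|\le x\tan\alpha<\pi/2$, so $\cos y > 0$ and $|1+e^{-z}|^2 \ge 1 + e^{-2x} > 1 \ge 1-e^{-\pi/(2\tan\alpha)}$. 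The threshold $\pi/(2\tan\alpha)$ is designed precisely so that these two regimes dovetail.

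For Part~3, the upper bound follows from $1-e^{-z} = z\int_0^1 e^{-tz}\,dt$: taking moduli yields $|1-e^{-z}| \le |z|(1-e^{-x})/x$, and I would then combine this with the elementary inequality $(1-e^{-x})/x \le 2/(1+x)$ for $x>0$ (easily checked by differentiating $(1+x)(1-e^{-x})-2x$ from the value $0$ at $x=0$) and with $x \ge |z|\cos\alpha$. For the lower bound I would use
\[
|1-e^{-z}|^2 \;=\; (1-e^{-x})^2 + 4 e^{-x}\sin^2(y/2) \;\ge\; (1-e^{-x})^2,
\]
giving $|1-e^{-z}| \ge 1-e^{-x}$, and then apply the classical $1-e^{-x} \ge x/(1+x)$ (equivalent to $e^x \ge 1+x$) together with the monotonicity of $t\mapsto t/(1+t)$ and $x \ge |z|\cos\alpha$. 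I do not expect any serious obstacle; the only step requiring a small computation is the elementary inequality $(1-e^{-x})/x \le 2/(1+x)$, and everything else reduces to dropping nonnegative terms in a sum of squares or to the monotonicity of $\arctan$ and $t\mapsto t/(1+t)$.
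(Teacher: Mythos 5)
Your proof is correct in all three parts; I could not find a gap. Note, however, that the paper itself gives no proof of this proposition: it simply quotes Proposition 4.10, p.~1880 of the reference by Dore, Favini, Labbas and Lemrabet, so there is no internal argument to compare yours against. What you have produced is a complete, self-contained elementary verification. The reduction of Part~1 to $\arg\tanh(z/2)=\arctan(\sin y/\sinh x)$ is clean: the positivity of $\operatorname{Re}(1\pm e^{-z})$ (which follows from $e^{-x}<1$) is exactly what legitimises taking the difference of arguments without a modulo-$2\pi$ ambiguity, and the pair of inequalities $|\sin y|\leqslant|y|$, $\sinh x\geqslant x$ does the rest. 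In Part~2 the case split at $x=\pi/(2\tan\alpha)$ is the right threshold, since below it one has $|y|<\pi/2$ and hence $\cos y>0$. In Part~3 the integral representation $1-e^{-z}=z\int_0^1 e^{-tz}\,dt$ for the upper bound and the sum-of-squares identity $|1-e^{-z}|^2=(1-e^{-x})^2+4e^{-x}\sin^2(y/2)$ for the lower bound, combined with $(1-e^{-x})/x\leqslant 2/(1+x)$, $1-e^{-x}\geqslant x/(1+x)$ and $x\geqslant|z|\cos\alpha$, give exactly the stated bounds; the auxiliary inequality $(1+x)(1-e^{-x})\leqslant 2x$ does follow from the derivative computation you indicate, since $1-xe^{-x}>0$ for all $x>0$.
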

This result is given in Proposition 4.10, p. 1880 in \cite{DoreLabbas}.

\begin{corollary}\label{Cor DoreLabbas 1}
Let $\theta_0, \theta_1 \in [0,\pi/2)$ with $\theta_0 < \theta_1$ and $\overline{L} >0$. Then, there exists $C>0$ such that for all $z\in S_{\theta_0}$ and all $\mu \in S_{\pi - \theta_1}\cup\{0\}$, we have 
$$\left|\frac{1-e^{-\overline{L}\sqrt{z + \mu}}}{\sqrt{z+\mu}\left(1+e^{-\overline{L}\sqrt{z+\mu}}\right)}\right| \leqslant \frac{C}{\sqrt{|z|+|\mu|}}.$$
\end{corollary}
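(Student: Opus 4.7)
The plan is to reduce the bound to Proposition~\ref{Prop DoreLabbas 2} applied to $w := \overline{L}\sqrt{z+\mu}$ inside a fixed sector $S_\alpha$ with $\alpha < \pi/2$. Two geometric facts about $z+\mu$ are needed: an argument bound $|\arg(z+\mu)| < \pi - \varepsilon_1$, which will ensure $w \in S_\alpha$ for $\alpha := (\pi - \varepsilon_1)/2$; and a modulus bound $|z+\mu| \geq c_1|z|$ with $c_1 > 0$ independent of $z,\mu$, which will convert the resulting factor $|\sqrt{z+\mu}|^{-1}$ into $|z|^{-1/2}$.

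For the modulus bound, Proposition~\ref{Prop DoreLabbas} gives
$$|z+\mu| \geq (|z|+|\mu|)\left|\cos\left(\tfrac{\arg z - \arg \mu}{2}\right)\right|,$$
and since $|\arg z - \arg\mu| \leq |\arg z| + |\arg\mu| < \varepsilon_0 + \pi - \varepsilon_1 = \pi - (\varepsilon_1 - \varepsilon_0) < \pi$ by hypothesis on the sectors (using $\varepsilon_0 < \varepsilon_1$), the half-angle lies in $(-\pi/2, \pi/2)$ and its cosine is bounded below by $c_1 := \sin((\varepsilon_1 - \varepsilon_0)/2) > 0$. Hence $|z+\mu| \geq c_1|z|$; the case $\mu = 0$ is trivial. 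For the argument bound, I would split according to the sign of $\arg\mu$ and to which of $|\arg z|$ and $|\arg \mu|$ is larger; in each case, Proposition~\ref{Prop ineg arg} applied to the appropriate ordering $(z_1, z_2) \in \{(z,\mu),(\mu,z)\}$ sandwiches $\arg(z+\mu)$ between $\arg z$ and $\arg\mu$, which gives $|\arg(z+\mu)| \leq \max(|\arg z|, |\arg\mu|) < \pi - \varepsilon_1$ as required.

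With both facts in hand, $w \in S_\alpha$ with $\alpha = (\pi - \varepsilon_1)/2 < \pi/2$, and Proposition~\ref{Prop DoreLabbas 2} yields $|1+e^{-w}| \geq c_\alpha := 1 - e^{-\pi/(2\tan\alpha)} > 0$ together with $|1-e^{-w}| \leq 2|w|/(1+|w|\cos\alpha)$. Since $|\sqrt{z+\mu}| = |w|/\overline{L}$, a direct computation followed by the trivial inequality $1 + |w|\cos\alpha \geq |w|\cos\alpha$ gives
$$\left|\frac{1-e^{-w}}{\sqrt{z+\mu}\,(1+e^{-w})}\right| \leq \frac{2\overline{L}}{c_\alpha(1+|w|\cos\alpha)} \leq \frac{2}{c_\alpha\cos\alpha\,|\sqrt{z+\mu}|} \leq \frac{2}{c_\alpha\cos\alpha\sqrt{c_1}}\cdot\frac{1}{|z|^{1/2}},$$
which is the desired estimate.

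The main obstacle is the case analysis for the argument bound: Proposition~\ref{Prop ineg arg} is stated only under specific orderings of $\arg z_1$ and $\arg z_2$, so one must choose the roles of $z$ and $\mu$ carefully in each subcase and verify the hypotheses (the key point being $\varepsilon_0 < \varepsilon_1$, which makes the forbidden negative half-line genuinely avoided). Once this bookkeeping is settled, the rest is a direct concatenation of the cited propositions.
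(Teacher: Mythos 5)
Your proposal is correct and follows essentially the same route as the paper: Proposition~\ref{Prop ineg arg} to place $\overline{L}\sqrt{z+\mu}$ in the sector $S_{\pi/2-\varepsilon_1/2}$, statements 2 and 3 of Proposition~\ref{Prop DoreLabbas 2} to bound the exponential factors, and Proposition~\ref{Prop DoreLabbas} with the observation $|\arg z - \arg\mu| < \pi - (\varepsilon_1-\varepsilon_0)$ to convert $|\sqrt{z+\mu}|^{-1}$ into $C|z|^{-1/2}$. The only difference is cosmetic ordering (you establish the modulus lower bound first, the paper last), so no further comment is needed.
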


\begin{proof}
 If $-\theta_1 < \arg(\mu) <\pi-\theta_1$, since $|\arg(z)| \leqslant \theta_0$, then from \refP{Prop ineg arg}, we have
$$-\theta_1 < \arg(z + \mu) < \pi - \theta_1,$$
and
$$-\frac{\theta_1}{2} < \arg(\overline{L}\sqrt{z+\mu}) = \frac{\arg(z + \mu)}{2} < \frac{\pi - \theta_1}{2}.$$
If $-\pi+\theta_1 < \arg(\mu) < \theta_1$, since $|\arg(z)| \leqslant \theta_0$, then from \refP{Prop ineg arg}, we have
$$-\pi+\theta_1 < \arg(z + \mu) < \theta_1,$$
and
$$\frac{-\pi + \theta_1}{2} < \arg(\overline{L}\sqrt{z+\mu}) = \frac{\arg(z + \mu)}{2} < \frac{\theta_1}{2}.$$
Therefore, we always have
$$\overline{L}\sqrt{z+\mu} \in S_{\frac{\pi}{2} - \frac{\theta_1}{2}}.$$
From \refP{Prop DoreLabbas 2}, there exists $C > 0$ such that
$$\begin{array}{lll}
\dis\left|\frac{1-e^{-\overline{L}\sqrt{z + \mu}}}{\sqrt{z+\mu}\,(1+e^{-\overline{L}\sqrt{z+\mu}})}\right| &\leqslant & \dis \frac{2\overline{L}|\sqrt{z+\mu}|}{\left(1 + \overline{L}|\sqrt{z+\mu}| \cos\left(\frac{\pi}{2} - \frac{\theta_1}{2}\right)\right)|\sqrt{z+\mu}|\left(1 - e^{-\frac{\pi}{2\tan\left(\frac{\pi}{2} - \frac{\theta_1}{2}\right)}}\right)} \\ \\

& \leqslant & \frac{2\overline{L}}{\left(1 + \overline{L}|\sqrt{z+\mu}| \sin\left(\frac{\theta_1}{2}\right)\right)\left(1 - e^{-\frac{\pi}{2}\tan\left(\frac{\theta_1}{2}\right)}\right)} \\ \\

& \leqslant & \dis \frac{C}{|\sqrt{z+\mu}|} = \frac{C}{\sqrt{|z + \mu|}}. 
\end{array}$$
Moreover, from \refP{Prop DoreLabbas}, we have
$$\left|\frac{1-e^{-\overline{L}\sqrt{z + \mu}}}{\sqrt{z+\mu}\,(1+e^{-\overline{L}\sqrt{z+\mu}})}\right| \leqslant \frac{C}{\sqrt{\left(|z| + |\mu|\right)\left|\cos\left(\dfrac{\arg(z) - \arg(\mu)}{2}\right)\right|}}.$$ 
Since $|\arg(\mu)| < \pi-\theta_1$ and $|\arg(z)| \leqslant \theta_0$, with $\theta_1 > \theta_0$, it follows that
$$|\arg(z) - \arg(\mu)| \leqslant \pi - (\theta_1 - \theta_0) < \pi,$$
and
$$\cos\left(\frac{\arg(z) - \arg(\mu)}{2}\right) \geqslant \cos\left( \frac{\pi}{2} - \frac{\theta_1 - \theta_0}{2}\right) = \sin\left(\frac{\theta_1 - \theta_0}{2}\right) > 0,$$
which gives the result.
\end{proof}

\begin{corollary}\label{Cor DoreLabbas 2}
Let $\alpha \in (0,\pi/2]$, $\beta \in [0, \alpha/2]$ and $z \in \mathbb{C}\setminus \{0\}$  such that $|\text{Im}(z)| \leqslant \pi$. Then
\begin{enumerate}
\item if $-\beta \leqslant \arg(z) < \alpha-\beta$, then we have
$$- \beta \leqslant \arg\left(1 - e^{-z}\right) - \arg\left(1 + e^{-z}\right) < \alpha - \beta.$$

\item if $-\alpha+\beta < \arg(z) \leqslant \beta$, then we have
$$-\alpha + \beta < \arg\left(1 - e^{-z}\right) - \arg\left(1 + e^{-z}\right) \leqslant \beta.$$
\end{enumerate}
 
\end{corollary}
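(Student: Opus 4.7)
The plan is to reduce $\arg(1-e^{-z}) - \arg(1+e^{-z})$ to $\arg(\tanh(z/2))$ via the standard identity $(1-e^{-z})/(1+e^{-z}) = \tanh(z/2)$, compute this explicitly in terms of $x = \mathrm{Re}(z)$ and $y = \mathrm{Im}(z)$, and then compare with $\arg(z)$ through elementary inequalities. Under the hypotheses, both intervals $[-\beta,\alpha-\beta)$ and $(-\alpha+\beta,\beta]$ sit inside $(-\pi/2,\pi/2)$ (because $\alpha\leq\pi/2$ and $\beta\in[0,\alpha/2]$), so $x > 0$ throughout. It follows that $\mathrm{Re}(1\pm e^{-z}) \geq 1-e^{-x} > 0$; the arguments of $1\pm e^{-z}$ therefore both lie in $(-\pi/2,\pi/2)$ and
$$\arg(1-e^{-z}) - \arg(1+e^{-z}) = \arg\!\bigl(\tanh(z/2)\bigr)$$
holds exactly, not merely modulo $2\pi$.

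Next, I would multiply $\sinh(z/2)/\cosh(z/2)$ by the conjugate of its denominator and apply the double-angle identities $\sinh x = 2\sinh(x/2)\cosh(x/2)$ and $\sin y = 2\sin(y/2)\cos(y/2)$ to obtain
$$\tanh(z/2) = \frac{\sinh x + i\sin y}{2\bigl(\cosh^2(x/2)\cos^2(y/2)+\sinh^2(x/2)\sin^2(y/2)\bigr)},$$
in which the denominator is strictly positive for $x > 0$ and $|y|\leq\pi$. Setting $\psi := \arg(1-e^{-z})-\arg(1+e^{-z})$, this yields $\psi = \arctan(\sin y/\sinh x) \in (-\pi/2,\pi/2)$. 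The pivotal estimate is the elementary inequality $|\sin y|\,x \leq |y|\,\sinh x$ for $x > 0$ and $|y|\leq\pi$, which follows from $|\sin y|\leq|y|$ and $\sinh x \geq x$; together with $\mathrm{sign}(\sin y) = \mathrm{sign}(y)$ on $[-\pi,\pi]$ and the monotonicity of $\arctan$, it delivers
$$\psi \in [0,\arg(z)] \text{ if } \arg(z)\geq 0, \qquad \psi \in [\arg(z),0] \text{ if } \arg(z)\leq 0.$$

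The two cases of the statement then follow by splitting on the sign of $\arg(z)$. For case 1, if $\arg(z)\in[0,\alpha-\beta)$ then $\psi\in[0,\arg(z)]\subset[-\beta,\alpha-\beta)$, while if $\arg(z)\in[-\beta,0)$ then $\psi\in[\arg(z),0]\subset[-\beta,\alpha-\beta)$; case 2 is obtained symmetrically via $z\mapsto \bar z$, which flips the signs of both $\arg(z)$ and $\psi$. The only genuine computational step is simplifying $\tanh(z/2)$ into the form above with a manifestly positive real denominator; once this is done, the result reduces to the familiar $|\sin y|\leq |y|$ and $\sinh x \geq x$ estimates, and the only real subtlety is matching the open versus closed endpoints of the conclusion to the two cases.
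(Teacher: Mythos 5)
Your proposal is correct, and it proves something slightly stronger than the paper does, by a genuinely different route. The paper splits the task in two: it first establishes the \emph{sign} of $\psi:=\arg(1-e^{-z})-\arg(1+e^{-z})$ by writing out $\operatorname{Re}(1\pm e^{-z})$ and $\operatorname{Im}(1\pm e^{-z})$ explicitly and reducing the comparison \eqref{eq Im Re} to $\sin(\operatorname{Im}z)\geqslant 0$, and then imports the \emph{magnitude} bound $|\psi|<\alpha$ from statement 1 of Proposition~\ref{Prop DoreLabbas 2} (i.e.\ from \cite{DoreLabbas}), finishing with a case split on the sign of $\arg(z)$ and on $\beta=0$ versus $\beta>0$. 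You instead pass to $\tanh(z/2)=(1-e^{-z})/(1+e^{-z})$, whose explicit form $\tfrac{1}{2}(\sinh x+i\sin y)/|\cosh(z/2)|^{2}$ gives $\psi=\arctan(\sin y/\sinh x)$ exactly (the identification of $\psi$ with $\arg(\tanh(z/2))$ is legitimate because both $1\pm e^{-z}$ have positive real part, so no $2\pi$ ambiguity arises), and the elementary estimates $|\sin y|\leqslant|y|$, $\sinh x\geqslant x$ yield the monotone comparison $\psi\in[0,\arg(z)]$ or $[\arg(z),0]$. This single inequality subsumes both the sign statement and the magnitude bound, makes the endpoint bookkeeping in cases 1 and 2 immediate, and renders the corollary self-contained rather than dependent on the cited Proposition 4.10 of \cite{DoreLabbas}; the only price is the (routine, and correctly indicated) computation of $\tanh(z/2)$ with a manifestly positive denominator, which uses $x>0$ — guaranteed here since both hypothesis intervals for $\arg(z)$ lie in $(-\pi/2,\pi/2)$ — and $|y|\leqslant\pi$ to ensure $\operatorname{sign}(\sin y)=\operatorname{sign}(y)$.
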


\begin{proof}
Since $S_\alpha$ is an open sector, then Proposition \ref{Prop DoreLabbas 2} remains true for $\alpha = \pi/2$.

\begin{enumerate}
\item First, let $\beta = 0$. Then, we have $0 \leqslant \arg(z) < \alpha$. As in the proof of statement 1 of Proposition~4.10, p. 1880 in \cite{DoreLabbas}, we have Re$(1-e^{-z})$, Re$(1+e^{-z}) > 0$. Let us prove that
$$\arg\left(1 - e^{-z}\right) - \arg\left(1 + e^{-z}\right) \geqslant 0.$$
To this end, we must show that
$$\arctan\left(\frac{\text{Im}(1 - e^{-z})}{\text{Re}(1 - e^{-z})}\right) \geqslant \arctan \left(\frac{\text{Im}(1 + e^{-z})}{\text{Re}(1 + e^{-z})}\right),$$
that is
$$\frac{\text{Im}(1 - e^{-z})}{\text{Re}(1 - e^{-z})} \geqslant \frac{\text{Im}(1 + e^{-z})}{\text{Re}(1 + e^{-z})},$$
or
\begin{equation}\label{eq Im Re}
\text{Im}(1 - e^{-z})\text{Re}(1 + e^{-z}) \geqslant \text{Im}(1 + e^{-z})\text{Re}(1 - e^{-z}).
\end{equation}
Since we have
$$\left\{\begin{array}{l}
\text{Re}(1-e^{-z}) = 1 - e^{-\text{Re}(z)}\cos(\text{Im}(z)), \\
\text{Re}(1+e^{-z}) = 1 + e^{-\text{Re}(z)}\cos(\text{Im}(z)), \\
\text{Im}(1-e^{-z}) =   e^{-\text{Re}(z)}\sin(\text{Im}(z)), \\
\text{Im}(1+e^{-z}) = - e^{-\text{Re}(z)}\sin(\text{Im}(z)),
\end{array}\right.$$
then, \eqref{eq Im Re} is equivalent to
$$e^{-\text{Re}(z)}\sin(\text{Im}(z))\left(1 + e^{-\text{Re}(z)}\cos(\text{Im}(z))\right) \geqslant - e^{-\text{Re}(z)}\sin(\text{Im}(z))\left(1 - e^{-\text{Re}(z)}\cos(\text{Im}(z)) \right);$$
hence
$$\sin(\text{Im}(z)) \geqslant 0.$$
which is true since $0 \leqslant \text{Im}(z)\leqslant \pi$. Now, taking into account that $0 \leqslant \arg(z) < \alpha$ and Proposition \ref{Prop DoreLabbas 2}, we obtain
\begin{equation}\label{ineq DL}
0 \leqslant \arg\left(1 - e^{-z}\right) - \arg\left(1 + e^{-z}\right) < \alpha.
\end{equation}
Now, let $\beta \in \left(0, \dfrac{\alpha}{2}\right]$ and $-\beta \leqslant \arg(z) < \alpha - \beta$. 

If $0 \leqslant \arg(z) < \alpha - \beta$, then from \eqref{ineq DL}, we deduce that
$$-\beta \leqslant 0 \leqslant \arg\left(1 - e^{-z}\right) - \arg\left(1 + e^{-z}\right) < \alpha - \beta.$$

If $-\beta < \arg(z) < 0$, then $z \in S_\beta$ and from Proposition \ref{Prop DoreLabbas 2}, we have
$$-\beta \leqslant \arg\left(1 - e^{-z}\right) - \arg\left(1 + e^{-z}\right) < \beta.$$
Note that, when $\arg(z) = -\beta$, then the previous inequality holds true since $\beta<\alpha$.
Finally, the result follows since $\beta \leqslant \dfrac{\alpha}{2}$.

\item In the same way, we obtain the expected result. 
\end{enumerate}

\end{proof}

\section{Proof of Theorem \ref{Th général}}\label{Sect 6}

\subsection{Spectral study of $\mathcal{S}$}

In this sections we will focus ourselves to study the spectral equation
\begin{equation}\label{SpectralEquation}
\mathcal{S}w-\lambda w=f\in \mathcal{E}=L^{p}(-\ell ,L;E),
\end{equation}
where $p\in (1,+\infty)$.

Our aim is to estimate the resolvent operator 
\begin{equation*}
\left\Vert \left( \mathcal{S}-\lambda I\right) ^{-1}\right\Vert _{\mathcal{L}\left(
L^{p}(-\ell ,L;E)\right) },
\end{equation*}
where $\lambda $ is a complex number in some sector to specify. This
estimate will allow us to prove that $\mathcal{S}$ generates an analytic
semigroup in $\mathcal{E}$. So, after the resolution of the spectral
equation, we have to estimate $\left\Vert w\right\Vert _{L^{p}(-\ell ,L;E)}$ that is 
\begin{equation*}
\left\Vert w_{-}\right\Vert_{L^{p}(-\ell ,0;E)} \quad \text{and} \quad \left\Vert w_{+}\right\Vert _{L^{p}(0,L;E)}.
\end{equation*}
We recall that all the constants $r_{-},r_{+},d_{-},d_{+},q$ are strictly
positive. In the sequel, we will use the following notations:
\begin{equation}\label{lambda+- et rho+-}
\lambda_\pm = \frac{\lambda}{d_\pm}, \quad \rho_\pm = \frac{r_\pm}{d_\pm}, \quad q_\pm = \frac{q}{d_\pm} \quad \text{and} \quad g_\pm = \frac{f_\pm}{d_\pm}.
\end{equation}

\subsubsection{The system verified by the spectral equation}

Equation \eqref{SpectralEquation} can be formulated as
\begin{equation*}
(P_{A})\left\{ 
\begin{array}{l}
\left\{ 
\begin{array}{ll}
w''_{-}(x) + \left(A - \rho_- I - \lambda_- I \right) w_{-}(x) = g_{-}(x) & \text{in }(-\ell ,0),\medskip \\ 
w''_{+}(x) + \left(A - \rho_+ I - \lambda_+ I\right)w_{+}(x) = g_{+}(x) & \text{in }(0,L),
\end{array}
\right. \medskip \\ 
\left\{ 
\begin{array}{l}
w_{-}(-\ell )=0,\text{\medskip } \\ 
w_{+}(L)=0,
\end{array}
\right. \medskip \\ 
\left\{ \begin{array}{l}
w'_{-}(0) = q_- \left( w_{+}(0)-w_{-}(0)\right), \\ 
w'_{+}(0) = q_+ \left( w_{+}(0)-w_{-}(0)\right).
\end{array}
\right.
\end{array}
\right.
\end{equation*}

Assume that the complex $\lambda $ satisfies:
\begin{equation}
\left\vert \arg (\lambda )\right\vert <\pi -\varepsilon,
\label{ConditionOnLambda}
\end{equation}
where $\varepsilon \in (0,\pi)$ as in \eqref{MoinsAbip}. Set
\begin{equation*}
A_{-} = A - \rho_- I - \lambda_- I \quad \text{and} \quad A_{+} = A - \rho_+ I - \lambda_+ I,
\end{equation*}
so we have 
\begin{equation*}
D(A_{-})=D(A_{+})=D(A).
\end{equation*}

\begin{proposition}
Operators $-A_-$ and $-A_+$ are sectorial and satisfy  
\begin{equation*}
\left\{\begin{array}{l}
-A_{-}\in Sect\left( \pi -\arcsin \left( 1/M(-A_{-})\right) \right), \\
-A_{+}\in Sect\left( \pi -\arcsin \left( 1/M(-A_{+})\right) \right),
\end{array}\right.
\end{equation*}
where $M(-A_\pm)$ is defined in Proposition \ref{Prop sect}.
\end{proposition}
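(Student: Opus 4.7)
The natural approach is to apply \refP{Prop sect} to each of the operators $-A_\pm = -A + (\rho_\pm + \lambda_\pm) I$. To do so I must verify (a) $(-\infty,0) \subset \rho(-A_\pm)$ and (b) $M(-A_\pm) = \sup_{t>0}\|t(-A_\pm + tI)^{-1}\|_{\mathcal{L}(E)} < +\infty$. The key observation is the trivial identity
$$
(-A_\pm + tI)^{-1} = (\mu(t)\,I - A)^{-1}, \qquad \mu(t) := t + \rho_\pm + \lambda_\pm, \quad t > 0,
$$
which reduces the question to a shifted resolvent problem for $A$ itself, so that all the information is encoded in assumption \eqref{EllipticityOfA} together with the position of $\mu(t)$ in the complex plane.

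The first step is to locate $\mu(t)$ in a sector contained in $\rho(A)$. Since $|\arg(\lambda)| < \pi - \varepsilon$ and $d_\pm > 0$, one has $\lambda_\pm \in S_{\pi-\varepsilon}$. If $\lambda_\pm \notin \mathbb{R}$, then $\rho_\pm + \lambda_\pm \notin \mathbb{R}$ as well; \refP{Prop arg} applied with $c = t + \rho_\pm > 0$ and $z = \rho_\pm + \lambda_\pm$ yields
$$
|\arg \mu(t)| < |\arg(\rho_\pm + \lambda_\pm)| < |\arg \lambda_\pm| < \pi - \varepsilon,
$$
while if $\lambda_\pm \in \mathbb{R}$ the sector condition forces $\lambda_\pm > 0$, hence $\mu(t) > 0$. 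In either case $\mu(t) \in S_{\pi-\varepsilon}$, so by \eqref{EllipticityOfA} we have $\mu(t) \in \rho(A)$ with
$$
\|(\mu(t) I - A)^{-1}\|_{\mathcal{L}(E)} \leqslant \frac{C(\varepsilon)}{|\mu(t)|}.
$$
In particular $(-\infty,0) \subset \rho(-A_\pm)$, settling hypothesis (a).

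For hypothesis (b) I need a lower bound $|\mu(t)| \geqslant c\, t$ uniform in $t > 0$. Here \refP{Prop DoreLabbas} applied to $z_1 = t$ and $z_2 = \rho_\pm + \lambda_\pm$ is tailor-made: since $\arg(z_1) = 0$ and $|\arg(z_2)| < \pi - \varepsilon$, it gives
$$
|\mu(t)| \geqslant (t + |\rho_\pm + \lambda_\pm|)\left|\cos\!\left(\frac{\arg(\rho_\pm+\lambda_\pm)}{2}\right)\right| \geqslant \sin(\varepsilon/2)\, t.
$$
Combining with the resolvent estimate above yields
$$
\|t(-A_\pm + tI)^{-1}\|_{\mathcal{L}(E)} \leqslant \frac{C(\varepsilon)\, t}{|\mu(t)|} \leqslant \frac{C(\varepsilon)}{\sin(\varepsilon/2)},
$$
which bounds $M(-A_\pm)$. \refP{Prop sect} now applies and delivers exactly the claimed sectorial angle $\pi - \arcsin(1/M(-A_\pm))$. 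No step is genuinely hard; the only place one has to be careful is handling the real/non-real alternative for $\lambda_\pm$ when invoking \refP{Prop arg}, and making sure the constants $\rho_\pm$ and $t$ are both nonnegative so that the shift argument goes through unchanged.
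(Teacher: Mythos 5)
Your proof is correct and follows essentially the same route as the paper: reduce to the shifted resolvent of $A$, place $\mu(t)=t+\rho_\pm+\lambda_\pm$ in $S_{\pi-\varepsilon}$ via \refP{Prop arg} (treating the real case separately), bound $|\mu(t)|$ from below by $t\sin(\varepsilon/2)$ via \refP{Prop DoreLabbas}, and conclude with \refP{Prop sect}. Only a bookkeeping slip: to obtain $|\arg\mu(t)|<|\arg(\rho_\pm+\lambda_\pm)|$ you should apply \refP{Prop arg} with $z=\rho_\pm+\lambda_\pm$ and $c=t$ (and then again with $z=\lambda_\pm$, $c=\rho_\pm$), not with $c=t+\rho_\pm$ and $z=\rho_\pm+\lambda_\pm$, since the latter shifts to $t+2\rho_\pm+\lambda_\pm$; the chain of inequalities you write is nevertheless the correct one.
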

\begin{proof}
If $\lambda \in \mathbb{R}_+$, then due to \eqref{MoinsAbip}, $-A_-$ and $-A_+$ are sectorial operators. 

Now, let $\lambda \in S_{\pi-\varepsilon}\setminus\mathbb{R_+}$ and $t>0$. We will verify that $-A_-$ is a sectorial operator in $E$ and $(-\infty ,0]\subset \rho (-A_{-})$. Since $\rho_- + t > 0$, from Proposition \ref{Prop arg}, we obtain 
$$\left|\arg\left(\rho_- + t + \lambda_-\right)\right| < \left|\arg\left(\lambda_-\right)\right| = |\arg(\lambda)| < \pi - \varepsilon.$$
Then
\begin{equation*}
M(-A_{-})=\underset{t>0}{\sup }\left\|
t(-A_{-}+tI)^{-1}\right\| = \underset{t>0}{\sup }\,t \left\|\left(-A + \left(\rho_- + t + \lambda_-\right) I\right)^{-1}\right\|_{\mathcal{L}(E)}.
\end{equation*}
Then, from \eqref{MoinsAbip} and Proposition \ref{Prop DoreLabbas}, we have
$$\begin{array}{lll}
\displaystyle\left\|\left(-A + \left(\rho_- + t + \lambda_-\right) I\right)^{-1}\right\|_{\mathcal{L}(E)} & \leqslant & \displaystyle\frac{C_A}{1 + \left|\rho_- + t + \lambda_-\right|} \leqslant \displaystyle\frac{C_A}{\left|\rho_- + t + \lambda_-\right|} \medskip \\
& \leqslant & \displaystyle\frac{C_A}{\left(\left|\rho_- + \lambda_-\right| + t\right)\cos\left(\frac{\arg\left(\rho_- + \lambda_-\right)}{2}\right)} \medskip \\
& \leqslant & \displaystyle\frac{C_A}{t\cos\left(\frac{\left|\arg\left(\rho_- + \lambda_-\right)\right|}{2}\right)}.
\end{array}$$
Moreover, from Proposition \ref{Prop arg}, we obtain 
$$\left|\arg\left(\left|\rho_- + \lambda_-\right|\right)\right| < \left|\arg\left(\lambda_-\right)\right| = |\arg(\lambda)| < \pi-\varepsilon,$$
and thus
$$\cos\left(\frac{\left|\arg\left(\rho_- + \lambda_-\right)\right|}{2}\right) > \cos\left(\frac{\pi}{2}-\frac{\varepsilon}{2}\right) = \sin\left(\frac{\varepsilon}{2}\right) > 0.$$
Finally, we have
\begin{equation*}
M(-A_{-})\leqslant \underset{t>0}{\sup }\left( \dfrac{t \,C_A}{t \sin\left(\dfrac{\varepsilon}{2}\right)}\right) = \dfrac{C_A}{\sin\left(\dfrac{\varepsilon}{2}\right)} < + \infty.
\end{equation*}
Hence, due to Proposition \ref{Prop sect}, we obtain the expected result for $-A_-$. For $-A_+$, the proof is similar.
\end{proof}

Therefore, the following operators
\begin{equation*}
B = - \sqrt{- A}, \quad B_{-} =- \sqrt{ - A_-} \quad \text{and} \quad B_{+} = - \sqrt{- A_+},
\end{equation*}
are well defined for all $\lambda \in S_{\pi-\varepsilon} \cup \{0\}$ and generate analytic semigroups in $E$, see \cite{Balakrishnan}.

By using estimates (28)-(29) in Lemma 4.2, see \cite{FaviniLabbasMaingotTorel}, there exists $C >0$, independent of $\lambda $ such that for all $z \in S_{\pi-\varepsilon} \cup \{0\}$, we have
\begin{equation}\label{IndependenceOfLambda}
\left\{ 
\begin{array}{l}
\left\Vert \left( B_{-}-zI\right) ^{-1}\right\Vert_{\mathcal{L}(E)} \leqslant \dfrac{C}{\sqrt{1+ |\lambda_- + \rho_-|}+\left\vert z\right\vert } \displaystyle\leqslant \frac{C}{1 + |z|}, \medskip
\\ 
\left\Vert \left( B_{+}-zI\right) ^{-1}\right\Vert_{\mathcal{L}(E)} \leqslant \dfrac{C}{\sqrt{1 + |\lambda_+ + \rho_+|}+\left\vert z\right\vert } \displaystyle\leqslant \frac{C}{1 + |z|}.
\end{array}\right. 
\end{equation}
Note that these estimates do not depend on $\lambda$. It follows that
\begin{equation}\label{estim B+-}
\left\|B_-^{-1}\right\|_{\mathcal{L}(E)} \leqslant \frac{C \sqrt{d_-}}{\sqrt{d_- + |\lambda + r_-|}} \quad \text{and} \quad \left\|B_+^{-1}\right\|_{\mathcal{L}(E)} \leqslant \frac{C \sqrt{d_+}}{\sqrt{d_+ + |\lambda + r_+|}}.
\end{equation}
Our problem $\left( P_{A}\right) $ can be written in the following form
\begin{equation}\label{PA}
\left\{ 
\begin{array}{l}
\left\{ 
\begin{array}{ll}
w''_{-}\left( x\right) -B_{-}^{2}w_{-}\left( x\right)=g_{-}(x) & \text{in }(-\ell ,0), \\ 
w_{+}''\left( x\right) -B_{+}^{2}w_{+}\left( x\right)=g_{+}(x) & \text{in }(0,L),
\end{array}
\right. \medskip \\ 
\left\{ 
\begin{array}{l}
w_{-}(-\ell )=0, \\ 
w_{+}(L)=0,
\end{array}
\right. \medskip \\ 
\left\{ 
\begin{array}{l}
w'_{-}(0)=q_-\left( w_{+}(0)-w_{-}(0)\right), \\ 
w'_{+}(0)=q_+\left( w_{+}(0)-w_{-}(0)\right).
\end{array}
\right.
\end{array}
\right.
\end{equation}
Then, we have 
$$\left\{\begin{array}{llll}
w_{-}\left( x\right) &=& e^{(x+\ell)B_{-}}j_{-}+e^{-xB_{-}}k_{-}+v_{-}(g_{-})(x), & x\in (-\ell ,0), \medskip \\
w_{+}\left( x\right) &=& e^{xB_{+}}j_{+}+e^{(L-x)B_{+}}k_{+}+v_{+}(g_{+})(x), & x\in (0,L),
\end{array}\right.$$
with $j_{\pm },$ $k_{\pm }\in E$ and
\begin{equation}\label{v_pm(g_pm) (x)}
\left\{\begin{array}{lll}
v_{-}(g_{-})(x) &=&\displaystyle \frac{1}{2}B_{-}^{-1} \int_{-\ell}^{x}e^{(x-t)B_{-}}g_{-}(t)dt+\frac{1}{2}B_{-}^{-1} \int_{x}^{0}e^{(t-x)B_{-}}g_{-}(t)dt, \medskip \\
v_{+}(g_{+})(x) &=& \displaystyle \frac{1}{2}B_{+}^{-1} \int_{0}^{x}e^{(x-t)B_{+}}g_{+}(t)dt+\frac{1}{2}B_{+}^{-1} \int_{x}^{L}e^{(t-x)B_{+}}g_{+}(t)dt;
\end{array}\right.
\end{equation}

see for instance Proposition 4.4, p. 1878 in \cite{DoreLabbas}.

Then, we deduce that
$$\left\{\begin{array}{lll}
w_{-}'\left( 0\right) &=& B_{-}e^{\ell B_{-}}j_{-}-B_{-}k_{-} + v_{-}'(g_{-})(0), \medskip \\
w_{+}'\left( 0\right)&=&B_{+}j_{+}-B_{+}e^{LB_{+}}k_{+} + v_{+}'(g_{+})(0),
\end{array}\right.$$
where
$$
v_{-}'(g_{-})(0) = \frac{1}{2} \int_{-\ell}^{0}e^{-tB_{-}}g_{-}(t)dt \quad \text{and} \quad v_{+}'(g_{+})(0) = - \dfrac{1}{2}\displaystyle \int_{0}^{L}e^{tB_{+}}g_{+}(t)dt.
$$
The boundary and the interface conditions give
\begin{equation}\label{syst Prop reg}
\left\{ 
\begin{array}{l}
j_{-}=-e^{\ell B_{-}}k_{-}-v_{-}(g_{-})(-\ell ), \\ 
k_{+}=-e^{LB_{+}}j_{+}-v_{+}(g_{+})(L), \\ 
\\ 
d_{-}\left( B_{-}e^{\ell B_{-}}j_{-}-B_{-}k_{-}+v_{-}'(g_{-})(0)\right) =\medskip \\ 
q\left[ \left( j_{+}+e^{LB_{+}}k_{+}+v_{+}(g_{+})(0)\right) -\left( e^{\ell
B_{-}}j_{-}+k_{-}+v_{-}(g_{-})(0)\right) \right], \\ 
\\ 
d_{+}\left( B_{+}j_{+}-B_{+}e^{LB_{+}}k_{+}+v_{+}'(g_{+})(0)\right)
=\medskip \\ 
q\left[ \left( j_{+}+e^{LB_{+}}k_{+}+v_{+}(g_{+})(0)\right) -\left( e^{\ell
B_{-}}j_{-}+k_{-}+v_{-}(g_{-})(0)\right) \right] ;
\end{array}
\right.
\end{equation}
then, the two last equations lead us to the following system
\begin{equation*}
\left\{ 
\begin{array}{l}
d_{-}\left[ B_{-}e^{\ell B_{-}}\left( -e^{\ell
B_{-}}k_{-}-v_{-}(g_{-})(-\ell )\right) -B_{-}k_{-}+v_{-}^{\prime }(g_{-})(0)
\right] \medskip \\ 
=q[\left( j_{+}+e^{LB_{+}}\left( -e^{LB_{+}}j_{+}-v_{+}(g_{+})(L)\right)
+v_{+}(g_{+})(0)\right) \medskip \\ 
\text{ \ }-e^{\ell B_{-}}\left( -e^{\ell B_{-}}k_{-}-v_{-}(g_{-})(-\ell
)\right) -k_{-}-v_{-}(g_{-})(0)], \\ 
\\ 
d_{+}\left[ B_{+}j_{+}-B_{+}e^{LB_{+}}\left(
-e^{LB_{+}}j_{+}-v_{+}(g_{+})(L)\right) +v_{+}^{\prime }(g_{+})(0)\right]
\medskip \\ 
=q[\left( j_{+}+e^{LB_{+}}\left( -e^{LB_{+}}j_{+}-v_{+}(g_{+})(L)\right)
+v_{+}(g_{+})(0)\right) \medskip \\ 
\text{ \ }-e^{\ell B_{-}}\left( -e^{\ell B_{-}}k_{-}-v_{-}(g_{-})(-\ell
)\right) -k_{-}-v_{-}(g_{-})(0),
\end{array}
\right.
\end{equation*}
which is equivalent to
\begin{equation*}
\left\{ 
\begin{array}{l}
-B_{-}e^{2\ell B_{-}}k_{-}-B_{-}e^{\ell B_{-}}v_{-}(g_{-})(-\ell
)-B_{-}k_{-}+v_{-}^{\prime }(g_{-})(0)\medskip \\ 
=q_{-}j_{+}-q_{-}e^{2LB_{+}}j_{+}-q_{-}e^{LB_{+}}v_{+}(g_{+})(L)+q_{-}v_{+}(g_{+})(0)\medskip
\\ 
\text{ \ }+q_{-}e^{2\ell B_{-}}k_{-}+q_{-}e^{\ell B_{-}}v_{-}(g_{-})(-\ell
)-q_{-}k_{-}-q_{-}v_{-}(g_{-})(0), \\ 
\\ 
B_{+}j_{+}+B_{+}e^{2LB_{+}}j_{+}+B_{+}e^{LB_{+}}v_{+}(g_{+})(L)+v_{+}^{
\prime }(g_{+})(0)\medskip \\ 
=q_{+}j_{+}-q_{+}e^{2LB_{+}}j_{+}-q_{+}e^{LB_{+}}v_{+}(g_{+})(L)+q_{+}v_{+}(g_{+})(0)\medskip
\\ 
\text{ \ }+q_{+}e^{2\ell B_{-}}k_{-}+q_{+}e^{\ell B_{-}}v_{-}(g_{-})(-\ell
)-q_{+}k_{-}-q_{+}v_{-}(g_{-})(0).
\end{array}\right.
\end{equation*}
Therefore, the above system becomes
\begin{equation*}
\left\{ 
\begin{array}{l}
\left[ B_{-}\left( I+e^{2\ell B_{-}}\right) -q_{-}\left( I-e^{2\ell
B_{-}}\right) \right] k_{-}+q_{-}\left( I-e^{2LB_{+}}\right) j_{+}=(\Pi'), \\ 
\\ 
q_{+}\left( I-e^{2\ell B_{-}}\right) k_{-}+\left[ B_{+}\left(
I+e^{2LB_{+}}\right) -q_{+}\left( I-e^{2LB_{+}}\right) \right] j_{+}=(\Pi''),
\end{array}
\right.
\end{equation*}
where
$$\left\{\begin{array}{lll}
(\Pi') & = & v_{-}'(g_{-})(0)-B_{-}e^{\ell B_{-}}v_{-}(g_{-})(-\ell) + q_{-}e^{LB_{+}}v_{+}(g_{+})(L)-q_{-}v_{+}(g_{+})(0) \medskip \\
&&- q_{-}e^{\ell B_{-}}v_{-}(g_{-})(-\ell )+q_{-}v_{-}(g_{-})(0), \\ \\

(\Pi'') & = & -v_{+}'(g_{+})(0)-B_{+}e^{LB_{+}}v_{+}(g_{+})(L)-q_{+}e^{LB_{+}}v_{+}(g_{+})(L)+q_{+}v_{+}(g_{+})(0) \medskip \\ 
&&+q_{+}e^{\ell B_{-}}v_{-}(g_{-})(-\ell )-q_{+}v_{-}(g_{-})(0).
\end{array}\right.$$
It follows 
\begin{equation*}
\left\{ 
\begin{array}{l}
\left[ \left( I+e^{2\ell B_{-}}\right) -q_{-}B_{-}^{-1}\left( I-e^{2\ell
B_{-}}\right) \right] k_{-}+q_{-}B_{-}^{-1}\left( I-e^{2LB_{+}}\right)
j_{+}=B_{-}^{-1}(\Pi'), \\ 
\\ 
q_{+}B_{+}^{-1}\left( I-e^{2\ell B_{-}}\right) k_{-}+\left[ \left(
I+e^{2LB_{+}}\right) -q_{+}B_{+}^{-1}\left( I-e^{2LB_{+}}\right) \right]
j_{+}=B_{+}^{-1}(\Pi'').
\end{array}
\right.
\end{equation*}
The abstract determinant of this system is
\begin{eqnarray*}
D &=&\left[ \left( I+e^{2\ell B_{-}}\right) -q_{-}B_{-}^{-1}\left(
I-e^{2\ell B_{-}}\right) \right] \left[ \left( I+e^{2LB_{+}}\right)
-q_{+}B_{+}^{-1}\left( I-e^{2LB_{+}}\right) \right] \medskip \\
&&-q_{+}B_{+}^{-1}\left( I-e^{2\ell B_{-}}\right) q_{-}B_{-}^{-1}\left(
I-e^{2LB_{+}}\right) \medskip \\
&=& \left( I+e^{2\ell B_{-}}\right) \left( I+e^{2LB_{+}}\right) - \left(I+e^{2\ell B_{-}}\right) q_{+}B_{+}^{-1}\left( I-e^{2LB_{+}}\right) \medskip
\\
&& - q_{-}B_{-}^{-1}\left( I-e^{2\ell B_{-}}\right) \left(
I+e^{2LB_{+}}\right).
\end{eqnarray*}

\subsubsection{Invertibility of $I+e^{2\ell B_-}$ and $I+e^{2L B_+}$}

Now let us study, for instance, the invertibility of $I+e^{2L B_+}$, the same method can be used for $I+e^{2\ell B_-}$. For a fixed $\lambda \in S_{\pi-\varepsilon}$, operator $-A_+ = -A + (\rho_+ + \lambda_+)I$ has bounded $H^\infty(S_\varepsilon)$ functional calculus in virtue of Corollary 5.5.5, p. 122 in \cite{Haase}; then using Proposition 3.4, p. 1873 in \cite{DoreLabbas}, $\sqrt{-A_+}$ has bounded $H^\infty(S_{\varepsilon/2})$ functional calculus.

Now, let $z \in S_{\varepsilon/2}$, then by \refP{Prop DoreLabbas 2}, we have
$$\left|1 + e^{-2Lz}\right| \geqslant 1 - e^{-\frac{\pi}{2 \tan(\varepsilon/2)}} > 0.$$
Using the same reasoning as in \cite{DoreLabbas}, p. 1883, we consider the following function
$$\begin{array}{cccl}
f : & S_{\varepsilon/2} & \longrightarrow & \mathbb{C} \medskip \\ 
 & z & \longmapsto & 1 + e^{-2Lz},
\end{array}$$
which does not vanish on $S_{\varepsilon/2}$ and $1/f$ belongs to $H^\infty(S_{\varepsilon/2})$ with norm bounded. Therefore
$$f(\sqrt{-A_+}) = I + e^{-2L \sqrt{-A_+}} = I + e^{2L B_+},$$
is invertible with bounded inverse 
$$\left(I + e^{2L B_+}\right)^{-1} = \left(\frac{1}{f}\right) (\sqrt{-A_+}),$$
with norm independent of $\lambda$. We have also used \refP{Prop Dore-Labbas calcul fonctionnel}. We then conclude by the following lemma
\begin{lemma}\label{Inv (I-exp2Q)}
There exists a constant $C > 0$ independent of $\lambda$, such that operators $I + e^{2\ell B_-}$ and $I + e^{2L B_+}$ are boundedly invertible and 
\begin{equation*}
\left\Vert \left( I + e^{2\ell B_-}\right) ^{-1}\right \Vert_{\mathcal{L}(E)} \leqslant C \quad \text{and} \quad \left\Vert \left( I + e^{2L B_+}\right) ^{-1}\right \Vert_{\mathcal{L}(E)}\leqslant C.  
\end{equation*}
\end{lemma}
Therefore, we can write
$$D = \left( I+e^{2\ell B_{-}}\right) \left( I+e^{2LB_{+}}\right) D_{\ast },$$
where
\begin{equation*}
D_{\ast }=\left[ I-q_{+}B_{+}^{-1}\left( I-e^{2LB_{+}}\right) \left(
I+e^{2LB_{+}}\right) ^{-1}-q_{-}B_{-}^{-1}\left( I-e^{2\ell B_{-}}\right)
\left( I+e^{2\ell B_{-}}\right) ^{-1}\right] .
\end{equation*}

\subsubsection{Invertibility of the determinant}

In order to invert $D_{\ast},$ we will also use the $H^{\infty }$-calculus for sectorial operators. To this end, we consider the following function
\begin{equation*}
f(z)=\left[ 1+\frac{q_{+}\left( 1-e^{-2L\sqrt{z+\lambda _{+}+\rho _{+}}}\right) }{\sqrt{z+\lambda _{+}+\rho _{+}}\left( 1+e^{-2L\sqrt{z+\lambda_{+}+\rho _{+}}}\right) }+\frac{q_{-}\left( 1-e^{-2\ell \sqrt{z+\lambda_{-}+\rho _{-}}}\right) }{\sqrt{z+\lambda _{-}+\rho _{-}}\left( 1+e^{-2\ell \sqrt{z+\lambda _{-}+\rho _{-}}}\right) }\right],
\end{equation*}
for $\lambda \in S_{\pi - \varepsilon_0} \cup \{0\}$ and for all $z \in S_{\varepsilon}$, where $\varepsilon$ is defined in \eqref{MoinsAbip} and $ \varepsilon_0$ is fixed such that
$$\varepsilon < \varepsilon_0 <\frac{\pi}{2}.$$ 
Recall that $\rho_\pm$, $q_\pm$ and $\lambda_\pm$ are defined in \eqref{lambda+- et rho+-}. 

\begin{proposition}\label{Prop f>0}
There exists $R>0$ such that for all $z\in S_{\varepsilon}$ and $\lambda \in S_{\pi - \varepsilon_0} \setminus B(0,R)$, where $B(0,R)$ is the ball with center $0$ and radius $R$, we have
$$\left|f(z)\right| > \frac{\sqrt{2}}{2}.$$
\end{proposition}

\begin{proof}
Let $z \in S_{\varepsilon}$, $\rho_\pm > 0$ and $\lambda \in S_{\pi-\varepsilon_0}$, then $\lambda_\pm \in S_{\pi-\varepsilon_0}$. Since $\rho_\pm > 0$, due to \refP{Prop arg}, we have 
$$\lambda_\pm + \rho_\pm \in S_{\pi - \varepsilon_0}.$$
From \refC{Cor DoreLabbas 1}, with $\mu = \lambda_+ + \rho_+$ and $\overline{L} = 2 L$, there exists $C_+ >0$ such that for all $z \in S_\varepsilon$ and all $\lambda \in S_{\pi-\varepsilon_0}$, we have
$$\left|\frac{q_{+}\left( 1-e^{-2L\sqrt{z+\lambda _{+}+\rho _{+}}}\right) }{\sqrt{z+\lambda _{+}+\rho _{+}}\left( 1+e^{-2L\sqrt{z+\lambda_{+}+\rho _{+}}}\right)}\right| \leqslant \frac{C_+}{\sqrt{|z|+|\lambda_+ + \rho_+|}} \leqslant \frac{C_+}{\sqrt{|\lambda_+ + \rho_+|}},$$
and due to \refP{Prop DoreLabbas}, we obtain
$$\left|\frac{q_{+}\left( 1-e^{-2L\sqrt{z+\lambda _{+}+\rho _{+}}}\right) }{\sqrt{z+\lambda _{+}+\rho _{+}}\left( 1+e^{-2L\sqrt{z+\lambda_{+}+\rho _{+}}}\right)}\right| \leqslant \frac{C_+}{\sqrt{|\lambda_+| + \rho_+} \sqrt{\cos\left(\frac{\pi - \varepsilon_0}{2}\right)}} \leqslant \frac{\sqrt{d_+}C_+}{\sqrt{|\lambda|} \sqrt{\sin\left(\frac{\varepsilon_0}{2}\right)}}.$$
Thus, there exists $R_+ >0$, depending only on $\varepsilon_0$, $d_+$ and $L$, such that for all $z \in S_\varepsilon$ and $\lambda\in S_{\pi-\varepsilon_0}$ with $|\lambda| \geqslant R_+$, we have
$$\left|\frac{q_{+}\left( 1-e^{-2L\sqrt{z+\lambda _{+}+\rho _{+}}}\right) }{\sqrt{z+\lambda _{+}+\rho _{+}}\left( 1+e^{-2L\sqrt{z+\lambda_{+}+\rho _{+}}}\right)}\right| \leqslant \frac{1}{2} - \frac{\sqrt{2}}{4};$$
similarly, there exists $R_- >0$, depending only on $\varepsilon_0$, $d_+$ and $L$, such that for all $z \in S_\varepsilon$ and $\lambda\in S_{\pi-\varepsilon_0}$ with $|\lambda| \geqslant R_-$, we obtain
$$\left|\frac{q_{-}\left( 1-e^{-2\ell \sqrt{z+\lambda_{-}+\rho _{-}}}\right) }{\sqrt{z+\lambda _{-}+\rho _{-}}\left( 1+e^{-2\ell \sqrt{z+\lambda _{-}+\rho _{-}}}\right) }\right| \leqslant \frac{1}{2} - \frac{\sqrt{2}}{4}.$$
Therefore, for all $z\in S_\varepsilon$ and $\lambda \in S_{\pi-\varepsilon_0}$ with $|\lambda| \geqslant R = \max(R_+,R_-)$, setting
$$z_1 = \frac{q_{+}\left( 1-e^{-2L\sqrt{z+\lambda _{+}+\rho _{+}}}\right) }{\sqrt{z+\lambda _{+}+\rho _{+}}\left( 1+e^{-2L\sqrt{z+\lambda_{+} + \rho_{+}}}\right) } \quad \text{and} \quad z_2 = \frac{q_{-}\left( 1-e^{-2\ell \sqrt{z+\lambda_{-}+\rho _{-}}}\right) }{\sqrt{z+\lambda _{-}+\rho _{-}}\left( 1+e^{-2\ell \sqrt{z+\lambda _{-}+\rho _{-}}}\right) },$$
we then obtain 
$$|f(z)| \geqslant \left|1 - |z_1+z_2|\right| \geqslant \left|1 - |z_1| - |z_2|\right| > 1 - \frac{1}{2} + \frac{\sqrt{2}}{4} - \frac{1}{2} + \frac{\sqrt{2}}{4} = \frac{\sqrt{2}}{2}.$$
\end{proof}

Now, in view to improve \refP{Prop f>0}, we consider that 
$$\lambda \in \overline{S_{\frac{\pi}{2}}} = \left\{ z \in \mathbb{C} \setminus \{0\} :|\arg(z)| \leqslant  \frac{\pi}{2} \right\} \cup \{0\},$$
which implies, from \refP{Prop arg} and \refP{Prop ineg arg}, that for all $z\in S_\varepsilon$, we have
$$z + \lambda_\pm + \rho_\pm \in S_{\frac{\pi}{2}},$$
hence
\begin{equation}\label{racine dans S_pi/4}
\sqrt{z + \lambda_\pm + \rho_\pm} \in S_{\frac{\pi}{4}}.
\end{equation}

\begin{proposition}\label{Prop f>0 pour Re(lambda)>0}
For all $z\in S_{\varepsilon}$ and $\lambda \in \overline{S_{\frac{\pi}{2}}}$, we have
$$\left|f(z)\right| > 1.$$
\end{proposition}

\begin{proof}
Set again
$$z_1 = \frac{q_{+}\left( 1-e^{-2L\sqrt{z+\lambda _{+}+\rho _{+}}}\right) }{\sqrt{z+\lambda _{+}+\rho _{+}}\left( 1+e^{-2L\sqrt{z+\lambda_{+} + \rho_{+}}}\right) } \quad \text{and} \quad z_2 = \frac{q_{-}\left( 1-e^{-2\ell \sqrt{z+\lambda_{-}+\rho _{-}}}\right) }{\sqrt{z+\lambda _{-}+\rho _{-}}\left( 1+e^{-2\ell \sqrt{z+\lambda _{-}+\rho _{-}}}\right) },$$
then due to \refP{Prop DoreLabbas 2} and \eqref{racine dans S_pi/4}, we obtain that
$$\begin{array}{lll}
|\arg(z_1)| &=& \dis \left| \arg \left( 1-e^{-2L\sqrt{z+\lambda _{+}+\rho _{+}}}\right)  - \arg \left( 1+e^{-2L\sqrt{z+\lambda_{+} + \rho_{+}}}\right) - \arg\left(\sqrt{z + \lambda_\pm + \rho_\pm}\right)\right| \medskip \\
& \leqslant & \dis \left| \arg \left( 1-e^{-2L\sqrt{z+\lambda _{+}+\rho _{+}}}\right)  - \arg \left( 1+e^{-2L\sqrt{z+\lambda_{+} + \rho_{+}}}\right)\right| + \left|\arg\left(\sqrt{z + \lambda_\pm + \rho_\pm}\right)\right| \medskip \\
& < & \dfrac{\pi}{4} + \dfrac{\pi}{4} = \dfrac{\pi}{2},
\end{array}$$
and similarly
$$|\arg(z_2)| < \frac{\pi}{2}.$$
Thus, Re$(z_1)>0$ and Re$(z_1)>0$. Therefore
$$|f(z)| = |1+z_1 + z_2| \geqslant \text{Re}(1+z_1 +z_2) > 1.$$
\end{proof}

\begin{remark} 
Let $z \in S_\varepsilon$ and $\lambda \in S_{\pi-\varepsilon}$. In view to give more precisions on the previous result we can show that
$$|f(z)| > \frac{\sqrt{2}}{2},$$
under the following assumption 
\begin{equation}\label{hyp Im(z+lambda) < pi}
|\text{Im}(z+\lambda_\pm)| \leqslant \frac{\pi^2\tan(\varepsilon)}{2(2 + \tan(\varepsilon))} \,\min\left(\frac{1}{L^2},\frac{1}{\ell^2}\right) .
\end{equation}
Indeed, the algebraic formula for the square roots of a complex number gives us 
$$\left|\text{Im}\left(2L\sqrt{z+\lambda_+ + \rho_+}\right)\right| = 2L\sqrt{\frac{\left|z+\lambda_+ + \rho_+ \right| - \text{Re}\left(z+\lambda_+ + \rho_+\right)}{2}},$$
and
$$\left|\text{Im}\left(2\ell\sqrt{z+\lambda_- + \rho_-}\right)\right| = 2\ell\sqrt{\frac{\left|z+\lambda_- + \rho_- \right| - \text{Re}\left(z+\lambda_- + \rho_-\right)}{2}}.$$
When Re$(z+\lambda_\pm+\rho_\pm) \geqslant 0$, since
$$|z+\lambda_\pm + \rho_\pm| \leqslant |\text{Re}(z+\lambda_\pm + \rho_\pm)| + |\text{Im}(z+\lambda_\pm + \rho_\pm)|,$$
we obtain
$$\sqrt{\frac{|z+\lambda_\pm + \rho_\pm| - \text{Re}(z+\lambda_\pm + \rho_\pm)}{2}} \leqslant \sqrt{\frac{|\text{Im}(z+\lambda_\pm + \rho_\pm)|}{2}}.$$
When Re$(z+\lambda_\pm+\rho_\pm) < 0$, we have
$$\tan(\pi-(\pi-\arg(z+\lambda_\pm + \rho_\pm))) = \frac{|\text{Im}(z+\lambda_\pm + \rho_\pm)|}{|\text{Re}(z+\lambda_\pm + \rho_\pm)|},$$
and since $z+\lambda_\pm + \rho_\pm \in S_{\pi - \varepsilon}$, it follows 
$$|\text{Re}(z+\lambda_\pm + \rho_\pm)| \leqslant \frac{|\text{Im}(z+\lambda_\pm + \rho_\pm)|}{\tan(\varepsilon)},$$
hence
$$\begin{array}{lll}
\dis |z+\lambda_\pm + \rho_\pm| - \text{Re}(z+\lambda_\pm + \rho_\pm) & \leqslant & \dis 2 |\text{Re}(z+\lambda_\pm + \rho_\pm)| + |\text{Im}(z+\lambda_\pm + \rho_\pm)| \\ \ecart
& \leqslant & \dis |\text{Im}(z+\lambda_\pm + \rho_\pm)| \left( 1 + \frac{2}{\tan(\varepsilon)}\right).
\end{array}$$
Then
$$\sqrt{\frac{|z+\lambda_\pm + \rho_\pm| - \text{Re}(z+\lambda_\pm + \rho_\pm)}{2}} \leqslant \sqrt{\frac{|\text{Im}(z+\lambda_\pm + \rho_\pm)|}{2}} \sqrt{1 + \frac{2}{\tan(\varepsilon)}}.$$
Thus, from \eqref{hyp Im(z+lambda) < pi}, we deduce that 
$$\left|\text{Im}\left(2L\sqrt{z+\lambda_+ + \rho_+}\right)\right| \leqslant \pi \quad \text{and} \quad \left|\text{Im}\left(2\ell\sqrt{z+\lambda_- + \rho_-}\right)\right| \leqslant \pi.$$
Let $\lambda \neq 0$. We have to consider the two following cases: 
\begin{enumerate}
\item $-\varepsilon \leqslant \arg(\lambda_\pm) < \pi - \varepsilon$,
\item $-\pi + \varepsilon < \arg(\lambda_\pm) \leqslant \varepsilon$.
\end{enumerate}
Let $-\varepsilon \leqslant \arg(\lambda_\pm) < \pi - \varepsilon$.
Since $|\arg(z)| < \varepsilon$, from Proposition \ref{Prop ineg arg}, we have
$$-\varepsilon \leqslant \min\left(\arg(\lambda_\pm), \arg(z)\right) \leqslant \arg(z + \lambda_\pm) \leqslant \max\left(\arg(\lambda_\pm), \arg(z)\right) < \pi - \varepsilon,$$
and thus, if $\arg(z + \lambda_\pm) \neq 0$, from Proposition \ref{Prop arg}, we deduce that
$$\left|\arg(z + \lambda_\pm + \rho_\pm )\right| < \left|\arg(z + \lambda_\pm)\right| < \pi - \varepsilon,$$
and if $\arg(z+\lambda_\pm) = 0$, then $\arg(z+\lambda_\pm + \rho_\pm) = 0$. 
Moreover, when $\arg(z + \lambda_\pm + \rho_\pm ) < 0$, then due to Proposition \ref{Prop arg}, we have
$$-\varepsilon \leqslant \arg(z + \lambda_\pm ) < \arg(z + \lambda_\pm + \rho_\pm ).$$
Therefore, we obtain that
\begin{equation}\label{z+lambda+rho < pi- eps0}
-\varepsilon \leqslant \arg(z + \lambda_\pm + \rho_\pm ) < \pi- \varepsilon.
\end{equation}
Hence, setting $\overline{L} = 2L$ or $2\ell$, we deduce that 
\begin{equation}\label{theta+}
-\frac{\varepsilon}{2} \leqslant \arg\left(\sqrt{z + \lambda_\pm + \rho_\pm}\right) = \arg\left(\overline{L}\sqrt{z + \lambda_\pm + \rho_\pm}\right) < \frac{\pi}{2} - \frac{\varepsilon}{2}.
\end{equation}
We set
$$z_1 = \frac{q_{+}\left( 1-e^{-2L\sqrt{z+\lambda _{+}+\rho _{+}}}\right) }{\sqrt{z+\lambda _{+}+\rho _{+}}\left( 1+e^{-2L\sqrt{z+\lambda_{+} + \rho_{+}}}\right) } \quad \text{and} \quad z_2 = \frac{q_{-}\left( 1-e^{-2\ell \sqrt{z+\lambda_{-}+\rho _{-}}}\right) }{\sqrt{z+\lambda _{-}+\rho _{-}}\left( 1+e^{-2\ell \sqrt{z+\lambda _{-}+\rho _{-}}}\right) }.$$
Then, due to Proposition \ref{Prop DoreLabbas}, it follows that
$$\left|f(z)\right| = |1 + z_1 + z_2| \geqslant \left(1 + |z_1 + z_2|\right) \left|\cos\left(\frac{\arg(z_1 + z_2)}{2}\right)\right| > \left|\cos\left(\frac{\arg(z_1 + z_2)}{2}\right)\right|.$$
Moreover, due to \eqref{hyp Im(z+lambda) < pi} and \eqref{theta+}, we can use Corollary \ref{Cor DoreLabbas 2}, with $\alpha = \dfrac{\pi}{2}$ and $\beta = \dfrac{\varepsilon}{2}$; it follows that
 
$$- \frac{\varepsilon}{2} \leqslant \arg\left(1 - e^{-2L\sqrt{z + \lambda_+ + \rho_+}}\right) - \arg\left(1 + e^{-2L\sqrt{z + \lambda_+ + \rho_+}}\right) < \frac{\pi}{2} - \frac{\varepsilon}{2},$$
with
$$- \frac{\varepsilon}{2} \leqslant \arg\left(1 - e^{-2\ell\sqrt{z + \lambda_- + \rho_-}}\right) - \arg\left(1 + e^{-2\ell\sqrt{z + \lambda_- + \rho_-}}\right) < \frac{\pi}{2} - \frac{\varepsilon}{2}.$$
Since 
\begin{equation*}
\arg(z_1) = \arg\left(1 - e^{-2L\sqrt{z + \lambda_+ + \rho_+}}\right) - \arg\left(1 + e^{-2L\sqrt{z + \lambda_+ + \rho_+}}\right) - \arg\left(\sqrt{z + \lambda_+ + \rho_+} \right),
\end{equation*}
and
\begin{equation*}
\arg(z_2) = \arg\left(1 - e^{-2\ell \sqrt{z + \lambda_- + \rho_-}}\right) - \arg\left(1 + e^{-2\ell \sqrt{z + \lambda_- + \rho_-}}\right) - \arg\left(\sqrt{z + \lambda_- + \rho_-}\right),
\end{equation*}
it follows that 
$$- \frac{\varepsilon}{2} - \arg\left(\sqrt{z + \lambda_+ + \rho_+}\right) \leqslant \arg(z_1) < \frac{\pi}{2} - \frac{\varepsilon}{2} - \arg\left(\sqrt{z + \lambda_+ + \rho_+} \right),$$
and
$$- \frac{\varepsilon}{2} - \arg\left(\sqrt{z + \lambda_- + \rho_-}\right) \leqslant \arg(z_2) < \frac{\pi}{2} - \frac{\varepsilon}{2} - \arg\left(\sqrt{z + \lambda_- + \rho_-} \right).$$
Then, due to \eqref{theta+}, we obtain 
$$-\frac{\pi}{2} < - \frac{\varepsilon}{2} - \arg\left(\sqrt{z + \lambda_+ + \rho_+}\right) \leqslant \arg(z_1) < \frac{\pi}{2} - \frac{\varepsilon_0}{2} - \arg\left(\sqrt{z + \lambda_+ + \rho_+} \right) \leqslant \frac{\pi}{2},$$
with
$$- \frac{\pi}{2} < - \frac{\varepsilon}{2} - \arg\left(\sqrt{z + \lambda_- + \rho_-}\right) \leqslant \arg(z_2) < \frac{\pi}{2} - \frac{\varepsilon}{2} - \arg\left(\sqrt{z + \lambda_- + \rho_-} \right) \leqslant \frac{\pi}{2}.$$
Thus, $\arg(z_1),\arg(z_2) \in \left(-\dfrac{\pi}{2}, \dfrac{\pi}{2}\right)$. Then, in virtue of Proposition \ref{Prop ineg arg}, we deduce that
$$
- \frac{\pi}{2} < \min\left(\arg(z_1),\arg(z_2)\right) \leqslant \arg(z_1+z_2) \leqslant \max\left(\arg(z_1),\arg(z_2)\right) < \frac{\pi}{2}.
$$
We then obtain 
$$\left| \arg(z_1+z_2) \right| < \frac{\pi}{2},$$
and
$$\cos\left(\frac{\arg(z_1 + z_2)}{2}\right) > \cos\left(\frac{\pi}{4}\right) = \frac{\sqrt{2}}{2} > 0.$$
Therefore, for all $z \in S_{\varepsilon}$ such that \eqref{hyp Im(z+lambda) < pi} holds, we have 
$$|f(z)| > \frac{\sqrt{2}}{2}.$$
Now, when $-\pi + \varepsilon < \arg(\lambda_\pm) \leqslant \varepsilon$, then
$$-\frac{\pi}{2} + \frac{\varepsilon}{2} < \arg(\sqrt{z + \lambda_\pm + \rho_\pm}) = \arg(\overline{L}\sqrt{z + \lambda_\pm + \rho_\pm}) \leqslant \frac{\varepsilon}{2},$$
and due to Corollary \ref{Cor DoreLabbas 2}, we have $\arg(z_1),\arg(z_2) \in \left(-\dfrac{\pi}{2}, \dfrac{\pi}{2}\right)$. Thus, from Proposition \ref{Prop ineg arg}, we deduce the expected result. Finally, when $\lambda = 0$, it is clear that, following the same steps, we obtain a similar result.
\end{remark}

\begin{proposition}\label{Prop inv det}
Let $\lambda \in \overline{S_{\frac{\pi}{2}}}$. Then, operator $D$ is boundedly invertible with
\begin{equation*}
D^{-1}=D_{\ast }^{-1}\left( I+e^{2LB_{+}}\right) ^{-1}\left( I+e^{2\ell
B_{-}}\right) ^{-1},
\end{equation*}
and there exists $C>0$, independent of $\lambda$, such that
$$\left\|D^{-1}\right\|_{\mathcal{L}(E)} \leqslant C.$$
\end{proposition}
\begin{proof}
From \refP{Prop f>0 pour Re(lambda)>0}, $f$ does not vanish on $S_\varepsilon$ and $1/f$ is bounded. Moreover
\begin{equation*}
\left\Vert \dfrac{1}{f}\right\Vert _{\infty }\leqslant 1,
\end{equation*}
and taking into account \refC{Cor DoreLabbas 1}, we have
$$|f(z) - 1| = O\left(\frac{1}{|z|^{1/2}}\right), \quad z \in S_{\varepsilon} \quad \text{and} \quad |z| \longrightarrow + \infty.$$
Then, from \eqref{MoinsAbip}, there exists $C>0$ independent of $\lambda$ such that
$$\left\|\left(\frac{1}{f}\right)(-A) \right\|_{\mathcal{L}(E)} \leqslant C \left\Vert \dfrac{1}{f}\right\Vert _{\infty } \leqslant C.$$
Finally, from  \refP{Prop Dore-Labbas calcul fonctionnel}, we deduce that
\begin{equation*}
D_{\ast }^{-1}=\left( \frac{1}{f}\right) (-A) \in \mathcal{L}(E) \quad \text{and} \quad \left\Vert D_{\ast }^{-1}\right\Vert _{\mathcal{L}(E)}\leqslant C,
\end{equation*}
and using \refL{Inv (I-exp2Q)}, we obtain the expected result.
\end{proof}
\begin{remark}\label{Rem interpolation sapces}
Now, from equality $D_{\ast }A^{-1}=A^{-1}D_{\ast }$, it follows that
\begin{equation*}
D_{\ast }^{-1}A=AD_{\ast }^{-1},
\end{equation*}
on $D(A)$, hence $D_{\ast }^{-1}$ is a bounded operator from $D(A)$ into
itself. Therefore, by interpolation $D_{\ast }^{-1}$ is bounded from any
interpolation space $(D(A),E)_{\alpha ,\beta }$, for all $\alpha \in (0,1)$ and $\beta \in [1,+\infty]$ (see the definition in \cite{Grisvard}) into itself and clearly we have also the same estimate
\begin{equation*}
\left\Vert D_{\ast }^{-1}\right\Vert _{\mathcal{L}((D(A),E)_{\alpha ,\beta
})}\leqslant C.
\end{equation*}
\end{remark}

\subsubsection{Resolution of the system}

Assume that $\lambda \in \overline{S_{\frac{\pi}{2}}}$. Recall that
\begin{equation*}
\left\{ 
\begin{array}{l}
\left[ \left( I+e^{2\ell B_{-}}\right) -q_{-}B_{-}^{-1}\left( I-e^{2\ell
B_{-}}\right) \right] k_{-}+q_{-}B_{-}^{-1}\left( I-e^{2LB_{+}}\right)
j_{+}=B_{-}^{-1}(\Pi'), \\ 
\\ 
q_{+}B_{+}^{-1}\left( I-e^{2\ell B_{-}}\right) k_{-}+\left[ \left(
I+e^{2LB_{+}}\right) -q_{+}B_{+}^{-1}\left( I-e^{2LB_{+}}\right) \right]
j_{+}=B_{+}^{-1}(\Pi''),
\end{array}
\right.
\end{equation*}
where
\begin{equation*}
\left\{ 
\begin{array}{lll}
(\Pi') & = & \displaystyle v_{-}'(g_{-})(0)-B_{-}e^{\ell
B_{-}}v_{-}(g_{-})(-\ell)+q_{-}e^{LB_{+}}v_{+}(g_{+})(L)-q_{-}v_{+}(g_{+})(0)\medskip \\ 
&&\displaystyle -q_{-}e^{\ell B_{-}}v_{-}(g_{-})(-\ell)+q_{-}v_{-}(g_{-})(0), \\ 
\\ 
(\Pi'') & = & \displaystyle -v_{+}'(g_{+})(0)-B_{+}e^{LB_{+}}v_{+}(g_{+})(L)-q_{+}e^{LB_{+}}v_{+}(g_{+})(L)+q_{+}v_{+}(g_{+})(0),\medskip
\\ 
&& \displaystyle + q_{+}e^{\ell B_{-}}v_{-}(g_{-})(-\ell)-q_{+}v_{-}(g_{-})(0),
\end{array}
\right.
\end{equation*}
therefore
\begin{eqnarray*}
k_{-} &=& D^{-1}\left\vert 
\begin{array}{rr}
B_{-}^{-1}(\Pi') & q_{-}B_{-}^{-1}(I-e^{2LB_{+}})\medskip \\ 
B_{+}^{-1}(\Pi'') & \left[ \left( I+e^{2LB_{+}}\right)-q_{+}B_{+}^{-1}\left( I-e^{2LB_{+}}\right) \right]
\end{array}
\right\vert \\
&& \\
&=&D^{-1}\left[ B_{-}^{-1}\left[ \left( I+e^{2LB_{+}}\right)
-q_{+}B_{+}^{-1}\left( I-e^{2LB_{+}}\right) \right] (\Pi')-q_{-}B_{+}^{-1}B_{-}^{-1}(I-e^{2LB_{+}})(\Pi'')\right] ,
\end{eqnarray*}
and
\begin{eqnarray*}
j_{+} &=&D^{-1}\left\vert 
\begin{array}{ll}
\left[ \left( I+e^{2\ell B_{-}}\right) -q_{-}B_{-}^{-1}\left( I-e^{2\ell
B_{-}}\right) \right] & B_{-}^{-1}(\Pi')\medskip \\ 
q_{+}B_{+}^{-1}\left( I-e^{2\ell B_{-}}\right) & B_{+}^{-1}(\Pi'')
\end{array}
\right\vert \\
&& \\
&=&D^{-1}\left[ \left[ \left( I+e^{2\ell B_{-}}\right)
-q_{-}B_{-}^{-1}\left( I-e^{2\ell B_{-}}\right) \right] B_{+}^{-1}(\Pi'')-q_{+}B_{+}^{-1}\left( I-e^{2\ell B_{-}}\right)
B_{-}^{-1}(\Pi')\right] .
\end{eqnarray*}
We then deduce
\begin{equation*}
\left\{ \begin{array}{lll}
j_{-} & = & \displaystyle -e^{\ell B_{-}}D^{-1}B_{-}^{-1}\left[ \left( I+e^{2LB_{+}}\right) -q_{+}B_{+}^{-1}\left( I-e^{2LB_{+}}\right) \right] \medskip (\Pi') \\ 
&& \displaystyle + q_{-}e^{\ell B_{-}}D^{-1}B_{+}^{-1}B_{-}^{-1}(I-e^{2LB_{+}})(\Pi'')-v_{-}(g_{-})(-\ell ), \\ \\
 
k_{+} & = & \displaystyle -e^{LB_{+}}D^{-1}\left[ \left( I+e^{2\ell B_{-}}\right)
-q_{-}B_{-}^{-1}\left( I-e^{2\ell B_{-}}\right) \right] B_{+}^{-1}(\Pi'') \\ 
&& \displaystyle + q_{+}e^{LB_{+}}D^{-1}B_{+}^{-1}\left( I-e^{2\ell B_{-}}\right)
B_{-}^{-1}(\Pi')-v_{+}(g_{+})(L).
\end{array}\right.
\end{equation*}
Finally, we obtain for $a.e.$ $x\in (-\ell ,0) $
\begin{eqnarray*}
w_{-}\left( x\right)&=&D^{-1}\left( e^{-xB_{-}}-e^{(x+2\ell )B_{-}}\right) \left[ \left(
I+e^{2LB_{+}}\right) -q_{+}B_{+}^{-1}\left( I-e^{2LB_{+}}\right) \right]
B_{-}^{-1}(\Pi')\medskip \\
&&+q_{-}D^{-1}\left( e^{(x+2\ell )B_{-}}-e^{-xB_{-}}\right)
B_{-}^{-1}(I-e^{2LB_{+}})B_{+}^{-1}(\Pi'')\medskip \\
&&-e^{(x+\ell )B_{-}}v_{-}(g_{-})(-\ell ) + v_{-}(g_{-})(x),
\end{eqnarray*}
and for $a.e.$ $x\in (0,L)$
\begin{eqnarray*}
w_{+}\left( x\right) &=&D^{-1}\left( e^{xB_{+}}-e^{(2L-x)B_{+}}\right) \left[ \left( I+e^{2\ell B_{-}}\right) -q_{-}B_{-}^{-1}\left( I-e^{2\ell B_{-}}\right) \right]B_{+}^{-1}(\Pi'')\medskip \\
&&+q_{+}D^{-1}\left( e^{(2L-x)B_{+}}-e^{xB_{+}}\right) B_{+}^{-1}\left(
I-e^{2\ell B_{-}}\right) B_{-}^{-1}(\Pi')\medskip \\
&&-e^{(L-x)B_{+}}v_{+}(g_{+})(L) + v_{+}(g_{+})(x).
\end{eqnarray*}

\subsubsection{Optimal regularity of w$_{-}$ and w$_{+}$}\label{Section optimal reg}

Let $\lambda \in \overline{S_{\frac{\pi}{2}}}$. Since $B_{+}$ generates an analytic semigroup in $E$, we recall the following known results
\begin{equation}\label{RESULT1}
\left\{ 
\begin{array}{l}
x\longmapsto e^{xB_{+}}\psi \in L^{p}\left( 0,L;E\right) \text{ for all }\psi
\in E, \medskip \\ 
x\longmapsto B_{+}^{n}e^{xB_{+}}\psi \in L^{p}\left( 0,L;E\right)
\Longleftrightarrow \psi \in \left( D\left( B_{+}^{n}\right) ,E\right) _{\frac{1}{np},p},
\end{array}
\right.  
\end{equation}
where $p\in \left( 1,+\infty \right) $ and $n\in \mathbb{N}\backslash
\left\{ 0\right\} $; see the Theorem in \cite{Triebel}, p. 96.

We have the same result for $B_{-}$ on $(-\ell ,0)$. Note that, for these two results, we do not need assumption \eqref{UMD}. 

Let us recall the following well-known important result proved in \cite{Dore-Venni}.

\begin{theorem}\label{Th DorVen}
Let $X$ be a UMD Banach space, $-Q\in$ BIP\,$(X,\theta)$ with $\theta\in(0,\pi/2)$ and \mbox{$g\in L^p(a,b;X)$}. Then, for almost every $x\in(a,b)$, we have
$$
\int_a^x e^{(x-s)Q}g(s)\,ds \in D(Q)\quad\text{and}\quad \int_x^b e^{(s-x)Q}g(s)\,ds \in D(Q).
$$
Moreover,
$$
x\longmapsto Q\int_a^x e^{(x-s)Q}g(s)\,ds \in L^p(a,b;X)\quad\text{and}\quad x\longmapsto Q\int_x^b e^{(s-x)Q}g(s)\,ds \in L^p(a,b;X).
$$
\end{theorem}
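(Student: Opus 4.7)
The plan is to deduce this assertion from the Dore--Venni closed sum theorem applied to two commuting BIP operators on the Bochner space $L^{p}(a,b;X)$.

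First I would observe that $u(x) := \int_{a}^{x} e^{(x-s)Q} g(s)\,ds$ formally satisfies
\begin{equation*}
u'(x) - Q u(x) = g(x), \qquad u(a) = 0,
\end{equation*}
so proving $Qu \in L^{p}(a,b;X)$ is equivalent to maximal $L^{p}$-regularity for this abstract Cauchy problem. To that end, I introduce the derivative $\mathcal{D}$ on $L^{p}(a,b;X)$ with $\mathcal{D}u = u'$ and domain $\{u \in W^{1,p}(a,b;X) : u(a) = 0\}$. A key preliminary step is to show that $\mathcal{D} \in \text{BIP}\,(L^{p}(a,b;X), \pi/2)$: this rests on a Mellin-type representation of $\mathcal{D}^{is}$ as a principal-value singular integral, whose $L^{p}$-boundedness is precisely the statement that $X$ is UMD (via Bourgain's characterisation through the vector-valued Hilbert transform).

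Having established this, I would verify that $\mathcal{D}$ and $-Q$ commute in the resolvent sense, since they act on disjoint variables. Because $-Q \in \text{BIP}\,(X,\theta)$ with $\theta < \pi/2$, the BIP angles satisfy $\pi/2 + \theta < \pi$, which is exactly the hypothesis of the Dore--Venni sum theorem. It follows that $\mathcal{D} + (-Q)$ is closed and boundedly invertible from its natural domain onto $L^{p}(a,b;X)$, and the inverse of $\mathcal{D} - Q$ is represented by the variation-of-constants formula $g \mapsto u$. Reading off the components of this inverse, both $\mathcal{D}u = u' \in L^{p}(a,b;X)$ and $Qu \in L^{p}(a,b;X)$ follow, giving the first half of the statement. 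The second integral $\int_{x}^{b} e^{(s-x)Q} g(s)\,ds$ is reduced to the first by the reflection $y = a+b-x$ and $\tilde{g}(y) = g(a+b-y)$, which converts a backward Volterra convolution into a forward one.

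The principal technical difficulty lies in establishing the BIP property of $\mathcal{D}$ on $L^{p}(a,b;X)$ with the sharp angle $\pi/2$: the naive formulas for $\mathcal{D}^{is}$ involve singular oscillatory kernels that are a priori only defined as principal values, and honest bounds require operator-valued Fourier multiplier theory. I would address this by first proving the bound on the full line $\mathbb{R}$, where Bourgain's theorem applies cleanly to the vector-valued Hilbert transform, and then transferring to the bounded interval by extension/restriction procedures compatible with the zero boundary condition at $a$. Once this is in place, verifying the commutation of the resolvents of $\mathcal{D}$ and $Q$ and the density of $D(\mathcal{D}) \cap D(Q)$ needed by the sum theorem is routine.
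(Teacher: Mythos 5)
Your proposal is correct and follows essentially the same route as the paper, which states this result without proof as a direct citation of the Dore--Venni closed-sum theorem: one applies that theorem to the time-derivative $\mathcal{D}$ with domain $\{u\in W^{1,p}(a,b;X): u(a)=0\}$ (which is BIP of angle $\pi/2$ on $L^p(a,b;X)$ precisely because $X$ is UMD, and is boundedly invertible on the bounded interval) and to $-Q\in\mathrm{BIP}(X,\theta)$ with $\pi/2+\theta<\pi$, recovering maximal $L^p$-regularity for $u'-Qu=g$, $u(a)=0$; the backward integral follows by the reflection you describe. Nothing further is needed.
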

We are applying these results to our operators $B_\pm$. For all $\lambda \in \overline{S_{\frac{\pi}{2}}}$, we have
$$|\arg(\lambda_\pm + \rho_\pm)| < \frac{\pi}{2}.$$ 
Now, applying Theorem 2.4, p. 408 in \cite{monniaux}, on the sum
$$-A + (\lambda_\pm + \rho_\pm) I,$$
we obtain that
$$- A_\pm\in \text{BIP}\left(E,\frac{\pi}{2}\right),$$
since 
$$\varepsilon + |\arg(\lambda_\pm + \rho_\pm)| < \varepsilon + \frac{\pi}{2} < \pi.$$ 
We deduce that 
$$- B_\pm\in \text{BIP}\left(E,\frac{\pi}{4}\right),$$ 
from Proposition $3.2.1,$ e), p.~71 in \cite{Haase}. We then obtain the following lemma by taking $Q=B_\pm$.

\begin{lemma}\label{Dore Venni}
Let $h_-\in L^{p}\left(-\ell ,0;E\right)$ and $h_+\in L^{p}\left( 0,L;E\right)$ with $1<p<+\infty $. Assume that \eqref{UMD}, \eqref{0 dans rho de A} and \eqref{MoinsAbip} hold. Then, we have
$$\left\{\begin{array}{l}
x\longmapsto B_{-}\displaystyle \int_{-\ell }^{x}e^{\left( x-s\right) B_{-}}h_-(s) ds\in L^{p}\left(-\ell ,0;E\right), \medskip \\
x\longmapsto B_{-}\displaystyle \int_{x}^{0}e^{\left( s-x\right) B_{-}}h_-\left( s\right) ds \in L^{p}\left( -\ell ,0;E\right),
\end{array}\right.$$
and
$$\left\{\begin{array}{l}
x\longmapsto B_{+}\displaystyle \int_{0}^{x}e^{\left( x-s\right) B_{+}}h_+\left( s\right) ds\in L^{p}\left( 0,L;E\right), \medskip \\
x\longmapsto B_{+}\displaystyle \int_{x}^{L}e^{\left( s-x\right) B_{+}}h_+\left( s\right) ds\in L^{p}\left( 0,L;E\right).
\end{array}\right.$$
\end{lemma}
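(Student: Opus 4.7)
The plan is essentially to invoke Theorem \ref{Th DorVen} on each of the two subintervals $(-\ell,0)$ and $(0,L)$, once verifying that its hypotheses are satisfied for the operators $B_-$ and $B_+$ respectively. The lemma is, in spirit, just a translation of the Dore-Venni theorem into our current notation.

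First, I would observe that assumption \eqref{UMD} provides exactly the UMD hypothesis on the ambient Banach space $E$ required by Theorem \ref{Th DorVen}. Second, I would identify the bounded imaginary powers angle. The discussion immediately preceding the lemma already carries out this verification: starting from \eqref{MoinsAbip}, namely $-A \in \text{BIP}(E,\varepsilon_0)$, together with the bound $|\arg(\lambda_\pm + \rho_\pm)| < \pi - \varepsilon_0$, the perturbation result of Theorem 2.4, p. 408 in \cite{monniaux} yields $-A_\pm \in \text{BIP}(E,\pi-\varepsilon_0)$; then the square-root rule Proposition 3.2.1, e), p. 71 in \cite{Haase} gives $-B_\pm \in \text{BIP}(E,(\pi-\varepsilon_0)/2)$. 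Since $\varepsilon_0 \in (0,\pi/2)$, the angle $(\pi-\varepsilon_0)/2$ belongs to $(\pi/4,\pi/2)$, so in particular it lies strictly below $\pi/2$, which matches the restriction $\theta \in (0,\pi/2)$ in Theorem \ref{Th DorVen}.

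Third, I would apply Theorem \ref{Th DorVen} twice. Setting $X=E$, $\theta=(\pi-\varepsilon_0)/2$, $Q = B_-$, $(a,b) = (-\ell,0)$ and $g = h_-$ produces the two claimed $L^p(-\ell,0;E)$ conclusions for the maps
$$x\longmapsto B_- \int_{-\ell}^x e^{(x-s)B_-} h_-(s)\,ds \quad\text{and}\quad x\longmapsto B_- \int_x^0 e^{(s-x)B_-} h_-(s)\,ds.$$
The same theorem applied with $Q = B_+$, $(a,b) = (0,L)$ and $g = h_+$ gives the analogous conclusions on $(0,L)$ for the integrals involving $B_+$.

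I do not anticipate any real obstacle. The genuine work — verifying that $-B_\pm$ belongs to $\text{BIP}(E,\theta)$ with $\theta<\pi/2$ uniformly in $\lambda \in S_{\pi-\varepsilon_0}\cup\{0\}$ — has already been done in the preceding paragraph by combining \eqref{MoinsAbip} with the perturbation lemma from \cite{monniaux} and the fractional-power rule from \cite{Haase}. What remains is only a direct citation of Theorem \ref{Th DorVen}, specialized twice to match the two subintervals appearing in problem \eqref{PA}.
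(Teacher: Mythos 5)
Your proposal matches the paper's argument exactly: the paper also establishes $-B_\pm \in \text{BIP}(E,(\pi-\varepsilon_0)/2)$ via the Monniaux perturbation result and the square-root rule from \cite{Haase}, and then deduces the lemma by taking $Q=B_\pm$ in Theorem \ref{Th DorVen} on each subinterval. Your added remark that $(\pi-\varepsilon_0)/2 < \pi/2$ is the correct justification for the angle restriction in the Dore--Venni theorem.
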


\begin{lemma}\label{Dore Venni 2}
Let $h_-\in L^{p}\left( -\ell ,0;E\right)$ and $h_+ \in L^{p}\left( 0,L;E\right)$
with $1<p<+\infty$. Assume that \eqref{UMD}, \eqref{0 dans rho de A} and \eqref{MoinsAbip} hold. Then, we have

\begin{enumerate}
\item $\displaystyle \int_{0}^{L}e^{sB_{+}}h_+\left( s\right) ds$ and $\displaystyle \int_{0}^{L}e^{(L-s)B_{+}}h_+\left( s\right) ds$ belong to $\left(D(B_{+}),E\right) _{\frac{1}{p},p}=\left( D(B),E\right) _{\frac{1}{p},p}, \medskip $

\item $\displaystyle \int_{-\ell }^{0}e^{(s+\ell)B_{-}}h_-\left( s\right) ds$ and $\displaystyle \int_{-\ell}^{0}e^{-sB_{-}}h_-\left( s\right) ds$ belong to $\left( D(B_{-}),E\right) _{\frac{1}{p},p}=\left( D(B),E\right) _{\frac{1}{p},p}.$
\end{enumerate}
\end{lemma}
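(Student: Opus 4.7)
I would prove the four inclusions by realizing each integral as the boundary value of a function in the anisotropic space $W^{1,p}(I;E)\cap L^{p}(I;D(B_\pm))$ (with $I=(0,L)$ or $(-\ell,0)$), and then invoking the classical trace theorem for real interpolation, which asserts that evaluation at an endpoint maps this space continuously into $(D(B_\pm),E)_{1/p,p}$. The key input allowing this regularity is precisely \refL{Dore Venni}.

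Let me illustrate on the first integral of item 1. Set
$$\phi(x)=\int_{0}^{x}e^{(x-s)B_{+}}h_{+}(s)\,ds,\qquad x\in[0,L],$$
so that $\phi(0)=0$ and $\phi(L)=\int_{0}^{L}e^{(L-s)B_{+}}h_{+}(s)\,ds$, and $\phi$ satisfies the Duhamel identity $\phi'(x)=B_{+}\phi(x)+h_{+}(x)$. By \refL{Dore Venni}, $B_{+}\phi\in L^{p}(0,L;E)$, hence $\phi'=B_{+}\phi+h_{+}\in L^{p}(0,L;E)$, which gives $\phi\in W^{1,p}(0,L;E)$. Since $e^{xB_{+}}$ is a bounded analytic semigroup, $\phi\in L^{\infty}(0,L;E)\subset L^{p}(0,L;E)$, and together with $B_{+}\phi\in L^{p}(0,L;E)$ this shows $\phi\in L^{p}(0,L;D(B_{+}))$. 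The classical trace theorem (Lions--Peetre type) for the space $W^{1,p}(0,L;E)\cap L^{p}(0,L;D(B_{+}))$ then gives $\phi(L)\in (D(B_{+}),E)_{1/p,p}$, which is the claim.

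The remaining three integrals reduce to this case by elementary change of variables. For $\int_{0}^{L}e^{sB_{+}}h_{+}(s)\,ds$, the substitution $s\mapsto L-s$ transforms it into $\int_{0}^{L}e^{(L-s)B_{+}}\widetilde{h}_{+}(s)\,ds$ with $\widetilde{h}_{+}(s)=h_{+}(L-s)\in L^{p}(0,L;E)$, and the preceding argument applies verbatim. For item 2, one applies the same strategy on $(-\ell,0)$ using $B_{-}$ and the auxiliary function $\psi(x)=\int_{-\ell}^{x}e^{(x-s)B_{-}}h_{-}(s)\,ds$, together with the substitutions $s\mapsto s+\ell$ and $s\mapsto -s$ that match the two forms in item 2. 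Finally, the identification $(D(B_{\pm}),E)_{1/p,p}=(D(B),E)_{1/p,p}$ follows because $-A_{\pm}=-A+(\rho_{\pm}+\lambda_{\pm})I$ is a bounded scalar perturbation of $-A$, so $D(\sqrt{-A_{\pm}})=D(\sqrt{-A})$ with equivalent graph norms in the BIP sectorial framework.

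The main obstacle is the trace step: one must be careful that the exponent $1/p$ in the real-interpolation trace space genuinely corresponds to the regularity $\phi\in W^{1,p}(I;E)\cap L^{p}(I;D(B_{\pm}))$ that we have produced. This is classical, but should be cited cleanly (e.g.\ from Lions--Peetre or Grisvard's interpolation chapter). All other steps are essentially a mechanical repackaging of \refL{Dore Venni}, exploiting the fact that the integrals in question are precisely the Duhamel convolutions to which Dore--Venni applies.
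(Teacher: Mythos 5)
Your proof is correct and follows essentially the same route as the paper: both realize each integral as the endpoint trace of the Duhamel convolution $x\mapsto\int e^{(x-s)B_{\pm}}h(s)\,ds$, use the Dore--Venni maximal regularity to place it in $W^{1,p}(I;E)\cap L^{p}(I;D(B_{\pm}))$, and conclude via the Lions--Peetre/Grisvard trace theorem. The only cosmetic difference is that the paper handles $\int_{0}^{L}e^{sB_{+}}h_{+}(s)\,ds$ by evaluating $\psi_{2}(x)=\int_{x}^{L}e^{(s-x)B_{+}}h_{+}(s)\,ds$ at $x=0$ rather than by your reflection $s\mapsto L-s$.
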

\begin{proof}
Let us indicate the proof of the first statement for instance. Consider the
function 
\begin{equation*}
\psi _{1}(x)=\displaystyle \int_{0}^{x}e^{(x-s)B_{+}}h_+\left( s\right) ds;
\end{equation*}
then, from Theorem \ref{Th DorVen}, we know that
\begin{equation*}
\psi _{1}\in W^{1,p}\left( 0,L;E\right) \cap L^{p}\left( 0,L;D(B_{+})\right)
;
\end{equation*}
by using the notation in \cite{Grisvard}, pp. 677-678 for the spaces of
traces, we deduce that 
\begin{equation*}
\psi _{1}(L)\in T_{0}^{1}\left( p,0,D(B_{+}),E\right) =\left(
D(B_{+}),E\right) _{\frac{1}{p},p}=\left( D(B),E\right) _{\frac{1}{p},p},
\end{equation*}
here, the Poulsen condition is verified since $0<1/p<1$. By considering the function
\begin{equation*}
\psi _{2}(x)=\displaystyle \int_{x}^{L}e^{(s-x)B_{+}}h_+\left( s\right) ds,
\end{equation*}
we get 
\begin{equation*}
\psi _{2}(0)\in T_{0}^{1}\left( p,0,D(B_{+}),E\right) =\left(
D(B_{+}),E\right) _{\frac{1}{p},p}=\left( D(B),E\right) _{\frac{1}{p},p}.
\end{equation*}
Statement 2 is obtained analogously.
\end{proof}

\begin{proposition}\label{Prop Existence-Unicité w}
Let $f \in L^{p}\left(-\ell,L;E\right)$ with $1<p<+\infty$. Assume that \eqref{UMD}, \eqref{0 dans rho de A} and \eqref{MoinsAbip} hold. Then, for all $\lambda \in \overline{S_{\frac{\pi}{2}}}$, there exists a unique solution $w \in D(\mathcal{S})$ of equation \eqref{SpectralEquation}.
\end{proposition}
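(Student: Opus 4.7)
The plan is to use the explicit expressions for $w_-$ and $w_+$ derived above as a candidate solution, to verify that this candidate lies in $D(\mathcal{S})$ and satisfies the spectral equation, and then to deduce uniqueness from the invertibility of the determinant $D$ already established in \refP{Prop inv det}.

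\medskip

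\textbf{Regularity of the particular parts.} I would first apply \refL{Dore Venni} with $Q = B_\pm$ to the two convolutions defining $v_\pm(g_\pm)$ in \eqref{v_pm(g_pm) (x)}. This immediately yields $B_\pm^{2}v_\pm(g_\pm) \in L^p$. Using the identity $B_\pm^{2}=-A+(\lambda_\pm+\rho_\pm)I$ on $D(A)$, this gives both $A v_\pm(g_\pm)\in L^p$ and, since $v_\pm(g_\pm)''=B_\pm^{2}v_\pm(g_\pm)+g_\pm$, also $v_\pm(g_\pm)''\in L^p$. Similarly, the boundary correction $e^{(x+\ell)B_-}v_-(g_-)(-\ell)$ and its analogue at $L$ are handled in the next step.

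\medskip

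\textbf{Regularity of the exponential parts.} The remaining terms have the form $e^{xB_\pm}\psi$ or $e^{(\pm\cdot-x)B_\pm}\psi$ applied to the vectors $j_\pm,k_\pm$ read off from the system resolution. By \eqref{RESULT1}, such a term contributes to $L^p\bigl(\,\cdot\,;D(A)\bigr)$ and has $B_\pm^{2}$-image in $L^p$ if and only if $\psi\in(D(B_\pm^{2}),E)_{1/(2p),p}=(D(A),E)_{1/(2p),p}$. The core technical step is therefore to show that $j_\pm$ and $k_\pm$ belong to this interpolation space. I would use \refL{Dore Venni 2} to place the traces $v_\pm(g_\pm)(0)$, $v_-(g_-)(-\ell)$, $v_+(g_+)(L)$ in $(D(B),E)_{1/p,p}$, then exploit the regularising action of $B_\pm^{-1}$ (one level of $B$ gained, so by reiteration $(D(B),E)_{1/p,p}$ is lifted into $(D(B^{2}),E)_{1/(2p),p}$), and finally transport the result through $D^{-1}$ using \refR{Rem interpolation sapces}. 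The term $v_\pm'(g_\pm)(0)$ is an $E$-valued object on which $B_\pm^{-1}$ acts once, placing it already in $D(B_\pm)\subset(D(B^{2}),E)_{1/(2p),p}$. The bounded semigroups $e^{2\ell B_-}$ and $e^{2LB_+}$ preserve each of these interpolation spaces, so a term-by-term inspection of the formulas for $j_\pm,k_\pm$ yields the claim. This bookkeeping of the smoothing indices in every summand of $j_\pm,k_\pm$ is the main obstacle; it is precisely where assumption \eqref{MoinsAbip} is exploited via Dore--Venni, through the BIP property of $-B_\pm$ established in the paragraph preceding \refL{Dore Venni}.

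\medskip

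\textbf{Verification and uniqueness.} The endpoint conditions $w_-(-\ell)=0$, $w_+(L)=0$ and the two transmission conditions on $\Gamma_0$ were used to derive \eqref{syst Prop reg} and hence to define $j_\pm,k_\pm$; they therefore hold by construction. The abstract equation $w_\pm''+Aw_\pm-\rho_\pm w_\pm=\lambda_\pm w_\pm+g_\pm$ follows by direct differentiation of the explicit formulas. For uniqueness, setting $f=0$ forces $v_\pm(g_\pm)=0$, so the coupled system for $(k_-,j_+)$ is homogeneous with operator matrix of determinant $D$; by \refP{Prop inv det}, $D^{-1}\in\mathcal{L}(E)$, hence $k_-=j_+=0$, and then $j_-=k_+=0$ through the first two lines of \eqref{syst Prop reg}, yielding $w_\pm=0$.
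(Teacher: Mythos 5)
Your overall strategy is the one the paper follows: reduce everything, via \eqref{RESULT1}, to showing that $j_\pm,k_\pm\in(D(B_\pm^{2}),E)_{\frac{1}{2p},p}$, obtain the required regularity of the traces from \refL{Dore Venni 2}, gain one power through $B_\pm^{-1}$, and transport through $D^{-1}$ using \refR{Rem interpolation sapces}; the treatment of the convolution parts via \refL{Dore Venni} and the uniqueness argument through the invertibility of $D$ (\refP{Prop inv det}) are likewise consistent with the paper, which leaves uniqueness implicit in the construction.

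There is, however, one step that is wrong as written: the claim that $v_\pm'(g_\pm)(0)$ may be treated as a mere element of $E$, so that a single application of $B_\pm^{-1}$ places it ``already in $D(B_\pm)\subset(D(B_\pm^{2}),E)_{\frac{1}{2p},p}$''. In the Grisvard convention used here the inclusion goes the other way: by reiteration, $(D(B_\pm^{2}),E)_{\frac{1}{2p},p}=\bigl\{u\in D(B_\pm):B_\pm u\in(D(B_\pm),E)_{\frac{1}{p},p}\bigr\}$, a proper subspace of $D(B_\pm)$ (this is exactly the identification $(D(B_\pm),E)_{1+\frac{1}{p},p}=(D(B_\pm^{2}),E)_{\frac{1}{2p},p}$ the paper invokes). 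Hence knowing only that $B_\pm^{-1}v_\pm'(g_\pm)(0)\in D(B_\pm)$ does not yield membership in the trace space, and that term of $k_\pm$ would not be controlled. The repair is the mechanism you already use for the other traces: $v_-'(g_-)(0)=\frac{1}{2}\int_{-\ell}^{0}e^{-tB_-}g_-(t)\,dt$ and $-v_+'(g_+)(0)=\frac{1}{2}\int_{0}^{L}e^{tB_+}g_+(t)\,dt$ are themselves the integrals covered by \refL{Dore Venni 2}, hence belong to $(D(B_\pm),E)_{\frac{1}{p},p}$ --- this is precisely where \eqref{UMD} and \eqref{MoinsAbip} enter --- and only then does the factor $B_\pm^{-1}$ standing in front of $(\Pi')$ and $(\Pi'')$ lift them into $(D(B_\pm^{2}),E)_{\frac{1}{2p},p}$. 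With this correction your argument coincides with the paper's proof.
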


\begin{proof}
Now we must show that
\begin{equation*}
\left\{ \begin{array}{l}
w_{-}\in W^{2,p}\left( -\ell ,0;E\right) \cap L^{p}\left( -\ell,0;D(A)\right), \medskip \\ 
w_{+}\in W^{2,p}\left( 0,L;E\right) \cap L^{p}\left( 0,L;D(A)\right).
\end{array}\right.
\end{equation*}
It is not difficult to see that all the boundary and transmission conditions in \eqref{PA} are verified by $w_-$ and $w_+$. 

From Proposition 4.4 in \cite{DoreLabbas}, p. 1878, to prove that
$$x \longmapsto w_-(x) = e^{(x+l)B_-} j_- + e^{-x b_-} k_- + v_- (g_-)(x)\in W^{2,p}\left( -\ell ,0;E\right) \cap L^{p}\left( -\ell,0;D(A)\right),$$
it suffices to show that $j_-$ and $k_-$ belong to $(D(B^2_-),E)_{\frac{1}{2p},p}$. Recall that due to \eqref{syst Prop reg}, we have
$$j_- = - e^{\ell B_{-}}k_{-}-v_{-}(g_{-})(-\ell ).$$
It is clear that 
$$e^{\ell B_{-}}k_{-} \in D(B^2_-) \subset (D(B^2_-),E)_{\frac{1}{2p},p}.$$
Moreover, due to \eqref{v_pm(g_pm) (x)}, we have
$$v_{-}(g_{-})(-\ell) = \frac{1}{2}B_{-}^{-1} \int_{-\ell}^{0}e^{(t+\ell)B_{-}}g_{-}(t)dt,$$
thus, from \refL{Dore Venni 2}, it follows that
$$B_{-}v_{-}(g_{-})(-\ell) \in (D(B_-),E)_{\frac{1}{p},p},$$
hence
$$v_{-}(g_{-})(-\ell) \in (D(B_-),E)_{1+\frac{1}{p},p} = (D(B^2_-),E)_{\frac{1}{2p},p}.$$
Furthermore, recall that 
$$k_- = D^{-1}\left[ B_{-}^{-1}\left[ \left( I+e^{2LB_{+}}\right)
-q_{+}B_{+}^{-1}\left( I-e^{2LB_{+}}\right) \right] (\Pi')-q_{-}B_{+}^{-1}B_{-}^{-1}(I-e^{2LB_{+}})(\Pi'')\right],$$
where
\begin{eqnarray*}
(\Pi') &=&-B_{-}e^{\ell B_{-}}v_{-}(g_{-})(-\ell ) + v'_{-}(g_{-})(0) + q_{-}e^{LB_{+}}v_{+}(g_{+})(L) - q_{-}v_{+}(g_{+})(0)\medskip \\
&& - q_{-}e^{\ell B_{-}}v_{-}(g_{-})(-\ell ) + q_{-}v_{-}(g_{-})(0),
\end{eqnarray*}
and 
\begin{eqnarray*}
(\Pi'') &=&-B_{+}e^{LB_{+}}v_{+}(g_{+})(L) - v'_{+}(g_{+})(0) - q_{+}e^{LB_{+}}v_{+}(g_{+})(L) + q_{+}v_{+}(g_{+})(0)\medskip \\
&& + q_{+}e^{\ell B_{-}}v_{-}(g_{-})(-\ell ) - q_{+}v_{-}(g_{-})(0).
\end{eqnarray*}
From \refR{Rem interpolation sapces}, interpolation spaces are invariant for $D^{-1}$, therefore, in order to prove that $k_- \in (D(B^2_-),E)_{\frac{1}{2p},p}$, it is sufficient to show that 
$$(\Pi'),~(\Pi'') \in (D(B_-),E)_{\frac{1}{p},p} = (D(B),E)_{\frac{1}{p},p} = (D(B_+),E)_{\frac{1}{p},p}.$$
For $(\Pi')$, we have
\begin{equation*}
-B_{-}e^{\ell B_{-}}v_{-}(g_{-})(-\ell)-q_{-}e^{\ell B_{-}}v_{-}(g_{-})(-\ell ) \in D(B^2_-) \subset (D(B_-),E)_{\frac{1}{p},p},
\end{equation*}
and
$$q_{-}e^{LB_{+}}v_{+}(g_{+})(L) \in D(B^2_+) \subset (D(B_+),E)_{\frac{1}{p},p}.$$
Similarly, for $(\Pi'')$, we have
\begin{equation*}
-B_{+}e^{LB_{+}}v_{+}(g_{+})(L)-q_{+}e^{LB_{+}}v_{+}(g_{+})(L) \in D(B^2_+) \subset (D(B_+),E)_{\frac{1}{p},p},
\end{equation*}
and
$$q_{+}e^{\ell B_{-}}v_{-}(g_{-})(-\ell ) \in D(B^2_-) \subset (D(B_-),E)_{\frac{1}{p},p}.$$
Then, it remains to prove that
$$v'_{-}(g_{-})(0) - q_{-}v_{+}(g_{+})(0) + q_{-}v_{-}(g_{-})(0) \in (D(B),E)_{\frac{1}{p},p},$$
and
$$- v'_{+}(g_{+})(0) + q_{+}v_{+}(g_{+})(0) - q_{+}v_{-}(g_{-})(0) \in (D(B),E)_{\frac{1}{p},p}.$$
From \refL{Dore Venni 2}, we have 
\begin{equation*}
v'_{-}(g_{-})(0) = B_{-}v_{-}(g_{-})(0)=\dfrac{1}{2}\displaystyle \int_{-\ell }^{0}e^{-tB_{-}}g_{-}(t) dt\in \left(D(B_{-}),E\right) _{\frac{1}{p},p},
\end{equation*}
and
\begin{equation*}
-v'_{+}(g_{+})(0) = B_{+}v_{+}(g_{+})(0)=\dfrac{1}{2}\displaystyle \int_{0}^{L}e^{tB_{+}}g_{+}(t) dt \in \left( D(B_{+}),E\right) _{\frac{1}{p},p},
\end{equation*}
hence
\begin{equation*}
v_{-}(g_{-})(0)=\frac{1}{2}B_{-}^{-1}\displaystyle \int_{-\ell }^{0}e^{-tB_{-}}g_{-}(t)dt \in \left( D(B_{-}),E\right) _{1+\frac{1}{p},p} \subset \left( D(B_{-}),E\right) _{\frac{1}{p},p},
\end{equation*}
and 
\begin{equation*}
v_{+}(g_{+})(0)=\frac{1}{2}B_{+}^{-1}\displaystyle \int_{0}^{L}e^{tB_{+}}g_{+}\left(t\right) dt \in \left( D(B_{+}),E\right) _{1+\frac{1}{p},p} \subset \left( D(B_{+}),E\right) _{\frac{1}{p},p}.
\end{equation*}
The proof for $w_+$ is analogous. Therefore $w \in D(\mathcal{S})$.
\end{proof}

\subsection{Estimate of the resolvent operator}

In all the sequel, $\lambda \in \overline{S_{\frac{\pi}{2}}}$.

\subsubsection{Some sharp estimates}

Recall that
$$B_- = -\sqrt{-A + \rho_- I + \lambda_- I} \quad \text{and} \quad B_+ = -\sqrt{-A + \rho_+ I + \lambda_+ I},$$
where
$$\lambda_\pm = \frac{\lambda}{d_\pm} \quad \text{and} \quad  \rho_\pm = \frac{r_\pm}{d_\pm}.$$
Recall that Lemma 2.6, p. 103 in \cite{Dore-Yakubov} gives the following result
\begin{equation}\label{YakubovEstimate}
\left\{ 
\begin{array}{l}
\exists \, C > 0,\exists\, c > 0,~\forall\, \kappa \in \mathbb{R},~\forall\, t \geqslant t_0 > 0,\forall
\, \lambda \in S_{\pi -\varepsilon _{0}}\cup\{0\}: \medskip \\ 
\left\Vert \left(-B_\pm\right)^{\kappa} e^{t B_\pm}\right\Vert_{\mathcal{L}(E)} \leqslant
C e^{- c t \left\vert \lambda_\pm + \rho_\pm \right\vert^{1/2}}.
\end{array}\right. 
\end{equation} 

\begin{proposition} \label{Prop Estimate U et V}
Let $g\in L^{p}\left(0,L;E\right)$ and $h\in L^{p}\left( -\ell ,0;E\right)$, $1<p<+\infty$. Set 
\begin{equation*}
U(g)(x)=\displaystyle \int_{0}^{L}e^{\left\vert x-t\right\vert B_{+}}g(t)dt \quad \text{and} \quad V(h)(x)=\displaystyle \int_{-\ell }^{0}e^{\left\vert x-t\right\vert B_{-}}h(t)dt.
\end{equation*}
Then, there exists a constant $C >0$ independent of $\lambda$ such that
\begin{equation*}
\left\{\begin{array}{lll}
\displaystyle\left\Vert U(g)\right\Vert _{L^{p}\left( 0,L;E\right) } & \leqslant & \displaystyle \frac{C \sqrt{d_{+}}}{\sqrt{\left\vert \lambda +r_{+}\right\vert +d_{+}}}\left\Vert g\right\Vert _{L^{p}\left( 0,L;E\right) }, \medskip \\

\displaystyle \left\Vert V(h)\right\Vert _{L^{p}\left( -\ell ,0;E\right) } & \leqslant & \displaystyle \frac{C \sqrt{d_{-}}}{\sqrt{\left\vert \lambda +r_{-}\right\vert +d_{-}}} \left\Vert h\right\Vert _{L^{p}\left( -\ell ,0;E\right) }.
\end{array}\right. 
\end{equation*}
\end{proposition}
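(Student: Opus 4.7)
The plan is to split each operator into a forward and a backward convolution and to estimate the pieces by a scalar Minkowski integral inequality, with the scalar kernel being the operator norm of the semigroup generated by $B_+$ (resp. $B_-$). Concretely, I would write
\begin{equation*}
U(g)(x) = \int_0^x e^{(x-t)B_+} g(t)\,dt + \int_x^L e^{(t-x)B_+} g(t)\,dt =: U_1(g)(x) + U_2(g)(x),
\end{equation*}
and apply Minkowski's integral inequality in the Bochner space $L^p(0,L;E)$ to each term to obtain, for $k=1,2$,
\begin{equation*}
\|U_k(g)\|_{L^p(0,L;E)} \;\leqslant\; \left(\int_0^L \|e^{sB_+}\|_{\mathcal{L}(E)}\,ds\right) \|g\|_{L^p(0,L;E)}.
\end{equation*}
The same manipulation would be done for $V(h)$ with $B_-$ on $(-\ell,0)$.

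The central point is to bound $\int_0^L \|e^{sB_+}\|_{\mathcal{L}(E)}\,ds$ uniformly in $\lambda$. Here I would upgrade \eqref{YakubovEstimate} (which only yields exponential decay for $s \geqslant t_0 > 0$) to a global estimate of the form
\begin{equation*}
\|e^{sB_+}\|_{\mathcal{L}(E)} \;\leqslant\; C\, e^{-c s\sqrt{1+|\lambda_+ + \rho_+|}}, \qquad s>0,
\end{equation*}
with constants $C,c>0$ independent of $\lambda$. This would be obtained by representing $e^{sB_+}$ by a Dunford integral along a sectorial contour enclosing the spectrum of $-B_+$, shifted to pass at distance $\sqrt{1+|\lambda_+ + \rho_+|}$ from the origin (which is allowed by the uniform resolvent bound \eqref{IndependenceOfLambda}), and estimating the contour integral in a standard way; for small $s$ one simply uses the uniform boundedness of the analytic semigroup $(e^{sB_+})_{s\geqslant 0}$ coming from \eqref{IndependenceOfLambda}. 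Integrating the exponential on $(0,L)$ yields
\begin{equation*}
\int_0^L \|e^{sB_+}\|_{\mathcal{L}(E)}\,ds \;\leqslant\; \int_0^{+\infty} C\,e^{-c s\sqrt{1+|\lambda_+ + \rho_+|}}\,ds \;\leqslant\; \frac{C'}{\sqrt{1+|\lambda_+ + \rho_+|}}.
\end{equation*}

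Using $\lambda_+ + \rho_+ = (\lambda + r_+)/d_+$, the algebraic identity
\begin{equation*}
\frac{1}{\sqrt{1+|\lambda_+ + \rho_+|}} \;=\; \frac{\sqrt{d_+}}{\sqrt{d_+ + |\lambda + r_+|}}
\end{equation*}
converts the previous estimate into the desired bound for $U(g)$. The estimate for $V(h)$ follows by literally the same argument applied with $B_-$ on $(-\ell,0)$, using $\lambda_- + \rho_- = (\lambda + r_-)/d_-$. The main obstacle in the argument is the uniform (in $\lambda$) exponential decay estimate for $e^{sB_\pm}$ including small $s$; once this is in place, the rest is Minkowski's inequality and a one-dimensional integration.
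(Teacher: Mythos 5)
Your argument is correct, but it takes a genuinely different route from the paper. The paper treats $U(g)=e^{|\cdot|B_{+}}\ast g$ as an operator-valued convolution, computes the Fourier multiplier $m(\xi)=-2B_{+}(B_{+}+2i\pi\xi I)^{-1}(B_{+}-2i\pi\xi I)^{-1}$, bounds $\sup_{\xi}\|m(\xi)\|$ and $\sup_{\xi}|\xi|\,\|m'(\xi)\|$ by $C\sqrt{d_{+}}/\sqrt{|\lambda+r_{+}|+d_{+}}$ using the resolvent estimate \eqref{IndependenceOfLambda}, and then invokes $R$-boundedness and the operator-valued multiplier theorem of Weis --- which is precisely where the UMD hypothesis \eqref{UMD} enters. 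You instead use the scalar Young/Minkowski inequality $\|K\ast g\|_{L^{p}}\leqslant\|K\|_{L^{1}}\|g\|_{L^{p}}$ with the kernel $K(s)=\|e^{sB_{\pm}}\|_{\mathcal{L}(E)}$, which reduces everything to a uniform-in-$\lambda$ bound on $\int_{0}^{+\infty}\|e^{sB_{\pm}}\|_{\mathcal{L}(E)}\,ds$; this is more elementary, avoids UMD and $R$-boundedness entirely, and yields the same constant. The one step that needs care is your global decay estimate $\|e^{sB_{+}}\|_{\mathcal{L}(E)}\leqslant Ce^{-cs\sqrt{1+|\lambda_{+}+\rho_{+}|}}$ for \emph{all} $s>0$: the estimate \eqref{YakubovEstimate} quoted in the paper holds only for $t\geqslant t_{0}>0$ with $t_{0}$ fixed, and patching it with the uniform boundedness of the semigroup on $(0,t_{0})$ does \emph{not} give the claimed bound on the intermediate range $(1+|\lambda_{+}+\rho_{+}|)^{-1/2}\lesssim s<t_{0}$, where the exponential factor is already small while the semigroup norm is only known to be $O(1)$. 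Your Dunford-integral derivation does deliver the bound, provided ``small $s$'' is read as $s\lesssim(1+|\lambda_{+}+\rho_{+}|)^{-1/2}$ (so that the exponential factor is bounded below and uniform boundedness suffices) and the contour shifted to distance of order $\sqrt{1+|\lambda_{+}+\rho_{+}|}$ from the origin is used for the remaining $s$; with that reading the proof is complete, and in fact only the integrated bound $\int_{0}^{+\infty}\|e^{sB_{\pm}}\|_{\mathcal{L}(E)}\,ds\leqslant C\,(1+|\lambda_{\pm}+\rho_{\pm}|)^{-1/2}$ is actually needed.
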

\begin{proof}
We will prove the estimate, for instance, for $B_+$. Let
$$g\in \mathcal{D}\left(0,L ;\mathcal{L}(E)\right)\subset \mathscr{S}\left( \mathbb{R};\mathcal{L}(E)\right),$$
where $\mathcal{D}\left(0,L;\mathcal{L}(E)\right)$ is the space of all vector-valued test functions on $\mathcal{L}(E)$ and $\mathscr{S}\left( \mathbb{R};\mathcal{L}(E)\right)$ is the Schwartz space of rapidly decreasing vector-valued smooth functions on $\mathbb{R}$. Then, we can write
\begin{equation*}
U(g)(x)=\left( e^{\left\vert .\right\vert B_{+}}\ast g\right) (x) = \left(g \ast e^{\left\vert .\right\vert B_{+}}\right) (x), \quad x \in \mathbb{R}.
\end{equation*}
This abstract convolution is well defined, see \cite{amann}.

Recall the abstract Fourier transform $F$ defined by
\begin{equation*}
F(\psi)(x) = \displaystyle \int_{-\infty }^{+\infty }e^{-2i\pi \xi x}\psi (\xi)d\xi,
\end{equation*}
for all $\psi \in L^{1}\left(\mathbb{R};\mathcal{L}(E\right))$ and the well known property
\begin{equation*}
F^{-1}\left( F(\phi \right) )=\phi ,
\end{equation*}
for all $\phi \in \mathscr{S}\left( \mathbb{R};\mathcal{L}(E\right))$. We have
\begin{eqnarray*}
F\left( e^{\left\vert .\right\vert B_{+}}\right) (\xi ) &=&\displaystyle \int_{-\infty
}^{0}e^{-2i\pi \xi x}e^{-xB_{+}}dx+\displaystyle \int_{0}^{+\infty }e^{-2i\pi \xi
x}e^{xB_{+}}dx \\
&=&-\left( B_{+}+2i\pi \xi I\right) ^{-1}-\left( B_{+}-2i\pi \xi I\right)
^{-1} \\
&=&-2B_{+}\left( B_{+}+2i\pi \xi I\right) ^{-1}\left( B_{+}-2i\pi \xi
I\right) ^{-1};
\end{eqnarray*}
the integrals are absolutely convergent from \eqref{YakubovEstimate}; the last equality holds from the resolvent identity. In virtue of Theorem 3.6, p. 17 in \cite{amann}, we obtain 
\begin{equation*}
U(g)(x) = F^{-1}\left( F\left( e^{-\left\vert .\right\vert B_{+}}\right)
F(g)\right) (x)=F^{-1}\left( m F(g)\right) (x),
\end{equation*}
with the Fourier multiplier
$$m(\xi ) = -2B_{+}\left( B_{+}+2i\pi \xi I\right) ^{-1}\left( B_{+}-2i\pi \xi
I\right) ^{-1} \in \mathcal{L}(E).$$
Using estimate (29), p. 14, in \cite{FaviniLabbasMaingotTorel}, we obtain
\begin{eqnarray*}
\left\Vert m(\xi )\right\Vert_{\mathcal{L}(E)} &=&\left\Vert -2B_{+}\left(
B_{+}+2i\pi \xi I\right) ^{-1}\left( B_{+}-2i\pi \xi I\right)
^{-1}\right\Vert \\
&\leqslant &2 C \left\Vert \left( B_{+}-2i\pi \xi I\right) ^{-1}\right\Vert \\
&\leqslant &\frac{2C}{\sqrt{\dfrac{\left\vert \lambda +r_{+}\right\vert }{d_{+}}+1}+\left\vert 2i\pi \xi \right\vert }.
\end{eqnarray*}
Then
\begin{equation*}
\left\Vert m(\xi )\right\Vert _{\mathcal{L}(E)}\leqslant \frac{2C\sqrt{d_{+}}}{\sqrt{\left\vert \lambda +r_{+}\right\vert +d_{+}}+2\pi \sqrt{d_{+}}\left\vert \xi
\right\vert } \leqslant \frac{2C\sqrt{d_{+}}}{\sqrt{\left\vert \lambda +r_{+}\right\vert +d_{+}}},
\end{equation*}
hence
\begin{equation*}
\underset{\xi \in \mathbb{R}}{\sup }\left\Vert m(\xi )\right\Vert _{\mathcal{L}(E)}\leqslant \frac{2C\sqrt{d_{+}}}{\sqrt{\left\vert \lambda +r_{+}\right\vert +d_{+}}}.
\end{equation*}
Now, we must estimate
\begin{equation*}
\underset{\xi \in \mathbb{R}}{\sup }\left\vert \xi \right\vert \left\Vert m'(\xi )\right\Vert
_{\mathcal{L}(E)}.
\end{equation*}
Due to the analyticity of the resolvent operator of $B_{+}$ on the imaginary
axis, it follows that 
\begin{equation*}
m\in C^{\infty }(\mathbb{R},\mathcal{L}(E)),
\end{equation*}
and
\begin{equation*}
m'(\xi )=2i\pi \left( B_{+}+2i\pi \xi I\right) ^{-2}-2i\pi \left(
B_{+}-2i\pi \xi I\right) ^{-2}.
\end{equation*}
Therefore, as above, we have
\begin{eqnarray*}
\left\vert \xi \right\vert \left\Vert m'(\xi )\right\Vert _{\mathcal{L}(E)}
&\leqslant &\left\Vert 2i\pi \xi \left( B_{+}+2i\pi \xi I\right)
^{-2}\right\Vert +\left\Vert 2i\pi \xi \left( B_{+}-2i\pi \xi I\right)
^{-2}\right\Vert \medskip \\
& \leqslant &\left\Vert 2i\pi \xi \left( B_{+}+2i\pi \xi I\right) ^{-1}\right\Vert
\left\Vert \left( B_{+}+2i\pi \xi I\right) ^{-1}\right\Vert \\
&&+\left\Vert 2i\pi \xi \left( B_{+}-2i\pi \xi I\right) ^{-1}\right\Vert
\left\Vert \left( B_{+}-2i\pi \xi I\right) ^{-1}\right\Vert \\
&\leqslant &\frac{2C}{\sqrt{\dfrac{\left\vert \lambda +r_{+}\right\vert }{
d_{+}}+1}+\left\vert 2i\pi \xi \right\vert },
\end{eqnarray*}
so
\begin{equation*}
\underset{\xi \in \mathbb{R}}{\sup }\left\vert \xi \right\vert \left\Vert m'(\xi )\right\Vert_{\mathcal{L}(E)}\leqslant \frac{2C\sqrt{d_{+}}}{\sqrt{\left\vert \lambda
+r_{+}\right\vert +d_{+}}}.
\end{equation*}
We do similarly with $\xi \longmapsto \left(\xi m'(\xi)\right)'$. Thus, from \cite{weis}, Proposition 2.5, p. 739, the sets
$$\left\{m(\xi), ~\xi \in \mathbb{R}\setminus\{0\}\right\} \quad \text{and} \quad \left\{\xi m'(\xi), ~\xi \in \mathbb{R}\setminus\{0\}\right\},$$
are R-bounded. Moreover, applying Theorem 3.4, p. 746 in \cite{weis}, we obtain 
\begin{eqnarray*}
\left\Vert U(g)\right\Vert _{L^{p}(0,L;E)} &=&\left\Vert F^{-1}\left(m F(g)\right) \right\Vert _{L^{p}(\mathbb{R};E)} \\
&\leqslant & C \left[ \underset{\xi \in \mathbb{R}}{\sup }\left\Vert m(\xi )\right\Vert +\underset{\xi \in \mathbb{R}}{\sup }\left\vert \xi \right\vert \left\Vert m'(\xi )\right\Vert \right] \left\Vert g\right\Vert _{L^{p}(0,L;E)} \\
&\leqslant &\frac{C \sqrt{d_{+}}}{\sqrt{\left\vert \lambda
+r_{+}\right\vert +d_{+}}}\left\Vert g\right\Vert _{L^{p}(0,L;E)},
\end{eqnarray*}
for all $g\in \mathcal{D}\left( 0,L;E\right).$ The same estimate is true
for all $g\in L^{p}(0,L;E)$ by density. 
\end{proof}
We will need also the following result, given by Lemma 4.12 in \cite{LMT}.
\begin{lemma}\label{Lem estim exp int X 4}
Let $g_+\in L^{p}\left(0,L;E\right)$ and $g_-\in L^{p}\left( -\ell ,0;E\right)$ with $1<p<+\infty$. Then, there exists $C > 0$ independent of $\lambda$
such that
\begin{enumerate}
\item $\dis \left\|e^{(\textbf{•}+\ell)B_-} \int_{-\ell}^0 e^{(t+\ell)B_-} g_-(t)~ dt\right\|_{L^p(-\ell,0;E)} \leqslant \frac{C \sqrt{d_-}}{\sqrt{d_- + |\lambda + r-|}} \left\|g_-\right\|_{L^p(-\ell,0;E)},$ 

\item $\dis \left\|e^{(\textbf{•}+\ell)B_-} \int_{-\ell}^0 e^{-t B_-} g_-(t)~ dt \right\|_{L^p(-\ell,0;E)} \leqslant \frac{C \sqrt{d_-}}{\sqrt{d_- + |\lambda + r_-|}} \left\|g_-\right\|_{L^p(-\ell,0;E)},$

\item $\dis \left\|e^{-\textbf{•} B_-} \int_{-\ell}^0 e^{-t B_-} g_-(t)~ ds\right\|_{L^p(-\ell,0;E)} \leqslant \frac{C \sqrt{d_-}}{\sqrt{d_- + |\lambda + r_-|}} \left\|g_-\right\|_{L^p(-\ell,0;E)},$

\item $\dis \left\|e^{-\textbf{•} B_-} \int_{-\ell}^0 e^{(t+\ell)B_-} g_-(t)~ dt\right\|_{L^p(-\ell,0;E)} \leqslant \frac{C \sqrt{d_-}}{\sqrt{d_- + |\lambda + r_-|}} \left\|g_-\right\|_{L^p(-\ell,0;E)},$

\item $\dis \left\|e^{\textbf{•} B_+} \int_0^L e^{t B_+} g_+(t)~ dt\right\|_{L^p(0,L;E)} \leqslant \frac{C \sqrt{d_+}}{\sqrt{d_+ + |\lambda + r_+|}} \left\|g_+\right\|_{L^p(0,L;E)},$ 

\item $\dis \left\|e^{\textbf{•} B_+} \int_0^L e^{(L-t)B_+} g_+(t)~ dt \right\|_{L^p(0,L;E)} \leqslant \frac{C \sqrt{d_+}}{\sqrt{d_+ + |\lambda + r_+|}} \left\|g_+\right\|_{L^p(0,L;E)},$

\item $\dis \left\|e^{(L-\textbf{•})B_+} \int_0^L e^{(L-t)B_+} g_+(t)~ ds\right\|_{L^p(0,L;E)} \leqslant \frac{C \sqrt{d_+}}{\sqrt{d_+ + |\lambda + r_+|}} \left\|g_+\right\|_{L^p(0,L;E)},$

\item $\dis \left\|e^{(L-\textbf{•})B_+} \int_0^L e^{tB_+} g_+(t)~ dt\right\|_{L^p(0,L;E)} \leqslant \frac{C \sqrt{d_+}}{\sqrt{d_+ + |\lambda + r_+|}} \left\|g_+\right\|_{L^p(0,L;E)}.$

\end{enumerate}
\end{lemma}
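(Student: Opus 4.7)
The plan is to prove each of the eight estimates by the same strategy, combining a Yakubov-type decay bound for the analytic semigroups $\{e^{tB_\pm}\}_{t\geqslant 0}$ with Hölder's inequality. All eight expressions share the common structure of an outer $e^{c_1(\cdot)B_\pm}$ multiplying a Bochner integral of $e^{c_2 tB_\pm}g_\pm(t)$ over the interval $(-\ell,0)$ or $(0,L)$, so I would carry out one representative case, namely statement 5, in full detail, and then obtain the other seven by linear changes of variable ($x\mapsto x+\ell$, $x\mapsto -x$, $x\mapsto L-x$, $t\mapsto -t$, $t\mapsto L-t$) which preserve both the semigroup structure and the $L^p$-norms on the relevant intervals.

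For statement 5, I write the quantity to be estimated as the product of the scalar-valued operator $x\mapsto e^{xB_+}$ and the fixed vector $\displaystyle\int_0^L e^{tB_+}g_+(t)\,dt \in E$. Using the refined uniform decay
\[
\|e^{tB_+}\|_{\mathcal{L}(E)}\leqslant C\,e^{-c\,t\sqrt{1+|\lambda_+ + \rho_+|}}, \qquad t\geqslant 0,
\]
which follows from the sectorial estimate \eqref{IndependenceOfLambda} by writing $e^{tB_+}$ as a Dunford integral along a suitable contour in the resolvent set and shifting by $\sqrt{1+|\lambda_+ + \rho_+|}$, Hölder's inequality in $t$ gives
\[
\left\|\int_0^L e^{tB_+}g_+(t)\,dt\right\|_E \leqslant C\left(\int_0^L e^{-cp't\sqrt{1+|\lambda_+ + \rho_+|}}\,dt\right)^{1/p'}\|g_+\|_{L^p(0,L;E)},
\]
and the exponential integral is bounded by $C(1+|\lambda_+ + \rho_+|)^{-1/(2p')}$. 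A parallel computation of $\|e^{(\cdot)B_+}\|_{L^p(0,L;\mathcal{L}(E))}$ yields the complementary factor $C(1+|\lambda_+ + \rho_+|)^{-1/(2p)}$. Multiplying, the exponents $1/p'$ and $1/p$ add to $1$, so the combined factor is $C(1+|\lambda_+ + \rho_+|)^{-1/2}$, which after multiplying numerator and denominator by $\sqrt{d_+}$ equals $C\sqrt{d_+}/\sqrt{d_+ + |\lambda + r_+|}$, as required.

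The main obstacle, and the one place where care is needed, is obtaining the refined decay with rate $\sqrt{1+|\lambda_+ + \rho_+|}$ rather than the bare $\sqrt{|\lambda_+ + \rho_+|}$ of \eqref{YakubovEstimate}. Without this improvement the estimate degenerates as $|\lambda + r_\pm|\to 0$ (in particular at $\lambda = 0$), whereas the target $\sqrt{d_\pm}/\sqrt{d_\pm + |\lambda + r_\pm|}$ remains finite there. The fix is to work directly from \eqref{IndependenceOfLambda}, whose denominator already contains the additive $1$, rather than from the asymptotic form \eqref{YakubovEstimate} valid only for $t\geqslant t_0 > 0$; one pushes the Dunford contour into the left half-plane by an amount proportional to $\sqrt{1+|\lambda_+ + \rho_+|}$ and uses that the resolvent norm remains controlled along this shifted contour.

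The remaining seven estimates follow by direct adaptation: statements 1--4 replace $B_+$ by $B_-$ on $(-\ell,0)$ after shifts $x\mapsto x+\ell$ or $x\mapsto -x$, while statements 6--8 keep $B_+$ on $(0,L)$ after shifts $x\mapsto L-x$ or $t\mapsto L-t$. Each such substitution reduces the expression to the canonical form treated above while preserving $L^p$-norms, and reproduces the same final bound with the analogous quantities $d_\pm$ and $r_\pm$, so no new analytic input is required.
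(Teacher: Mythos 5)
Your argument is correct, but it is worth noting that the paper does not actually prove this lemma at all: it simply cites Lemma 4.12 of \cite{LMT}, so your proposal supplies a self-contained proof where the paper offers only a reference. The proof you give is sound: each of the eight expressions is of the form $e^{\sigma(x)B_\pm}\Psi$ with $\sigma(x)\geqslant 0$ and $\Psi=\int e^{\tau(t)B_\pm}g_\pm(t)\,dt$ a fixed vector, so H\"older in $t$ bounds $\|\Psi\|_E$ by $\|e^{\tau(\cdot)B_\pm}\|_{L^{p'}}\|g_\pm\|_{L^p}$, the outer factor contributes $\|e^{\sigma(\cdot)B_\pm}\|_{L^p}$, and the exponents $\tfrac{1}{2p'}+\tfrac{1}{2p}=\tfrac12$ combine to give exactly $\bigl(1+|\lambda_\pm+\rho_\pm|\bigr)^{-1/2}=\sqrt{d_\pm}/\sqrt{d_\pm+|\lambda+r_\pm|}$. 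Your one point of care --- that the bare estimate \eqref{YakubovEstimate}, valid only for $t\geqslant t_0>0$ and with rate $|\lambda_\pm+\rho_\pm|^{1/2}$, is not directly sufficient --- is well taken, and your fix via a shifted Dunford contour using \eqref{IndependenceOfLambda} is the standard and correct way to get $\|e^{tB_\pm}\|_{\mathcal{L}(E)}\leqslant Ce^{-ct\sqrt{1+|\lambda_\pm+\rho_\pm|}}$ uniformly in $t\geqslant 0$. One small simplification you could have used instead: since the intervals are bounded, combining the uniform bound $\|e^{tB_\pm}\|\leqslant C$ (for small $t$) with \eqref{YakubovEstimate} (for $t\geqslant t_0$) already yields $\int_0^L e^{-cpt|\lambda_\pm+\rho_\pm|^{1/2}}dt\leqslant\min\bigl(L,\,C|\lambda_\pm+\rho_\pm|^{-1/2}\bigr)\leqslant C'(1+|\lambda_\pm+\rho_\pm|)^{-1/2}$, so the refined exponential rate is convenient but not indispensable. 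This elementary route has the merit of not relying on the machinery of \cite{LMT}, at the cost of redoing a computation the authors preferred to import.
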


\subsubsection{Estimate of $\|w\|_{L^p(-\ell,L;E)}$}

We have to estimate
\begin{equation*}
\left\Vert w_{-}\right\Vert_{L^{p}(-\ell ,0;E)} + \left\Vert w_{+}\right\Vert _{L^{p}(0,L;E)},
\end{equation*}
where, due to Proposition \ref{Prop Existence-Unicité w}, $w$ is the unique solution of \eqref{SpectralEquation}. Thus, it suffices to estimate $\left\Vert w_{-}\right\Vert_{L^{p}(-\ell ,0;E)}$. The same techniques apply to $\left\Vert w_{+}\right\Vert_{L^{p}(0,L;E)}$.

We have
\begin{eqnarray*}
w_{-}\left( x\right) &=&D^{-1}\left( e^{-xB_{-}}-e^{(x+2\ell )B_{-}}\right) B_{-}^{-1}\left(I+e^{2LB_{+}}\right) (\Pi ^{\prime }) \\
&&-q_{+}D^{-1}\left( e^{-xB_{-}}-e^{(x+2\ell )B_{-}}\right)
B_{-}^{-1}B_{+}^{-1}\left( I-e^{2LB_{+}}\right) (\Pi ^{\prime })\medskip \\
&&+q_{-}D^{-1}\left( e^{(x+2\ell )B_{-}}-e^{-xB_{-}}\right)
B_{+}^{-1}B_{-}^{-1}(I-e^{2LB_{+}})(\Pi ^{\prime \prime })\medskip \\
&&-e^{(x+\ell )B_{-}}v_{-}(g_{-})(-\ell ) \medskip \\
&& + v_{-}(g_{-})(x) \medskip \\
&=& \sum_{i=1}^{5} a_{i}(x),
\end{eqnarray*}
where
$$\left\{\begin{array}{lll}
\left( \Pi'\right) &=&v_{-}'(g_{-})(0)-B_{-}e^{\ell
B_{-}}v_{-}(g_{-})(-\ell
)+q_{-}e^{LB_{+}}v_{+}(g_{+})(L)-q_{-}v_{+}(g_{+})(0) \medskip \\
&&-q_{-}e^{\ell B_{-}}v_{-}(g_{-})(-\ell )+q_{-}v_{-}(g_{-})(0), \\ \\

\left( \Pi''\right) &=&-v_{+}'(g_{+})(0)-B_{+}e^{LB_{+}}v_{+}(g_{+})(L)-q_{+}e^{LB_{+}}v_{+}(g_{+})(L)+q_{+}v_{+}(g_{+})(0)
\medskip \\
&&+q_{+}e^{\ell B_{-}}v_{-}(g_{-})(-\ell )-q_{+}v_{-}(g_{-})(0),
\end{array}\right.$$
and
\begin{equation*}
\left\{ \begin{array}{lll}
v_{-}(g_{-})(x) & = & \displaystyle \dfrac{1}{2}B_{-}^{-1}\displaystyle \int_{-\ell
}^{x}e^{(x-t)B_{-}}g_{-}(t)~ dt+\dfrac{1}{2}B_{-}^{-1} \int_{x}^{0}e^{(t-x)B_{-}}g_{-}(t)~ dt, \medskip \\ 
v_{+}(g_{+})(x) & = & \displaystyle\dfrac{1}{2}B_{+}^{-1} \int_{0}^{x}e^{(x-t)B_{+}}g_{+}(t)~ dt+ \dfrac{1}{2}B_{+}^{-1} \int_{x}^{L}e^{(t-x)B_{+}}g_{+}(t)~dt, \medskip \\ 
v'_{-}(g_{-})(x) & = & \displaystyle\dfrac{1}{2} \int_{-\ell}^{x}e^{(x-t)B_{-}}g_{-}(t)~dt-\dfrac{1}{2} \int_{x}^{0}e^{(t-x)B_{-}}g_{-}(t)~dt, \medskip \\ 
v'_{+}(g_{+})(x) & = & \displaystyle\dfrac{1}{2} \int_{0}^{x}e^{(x-t)B_{+}}g_{+}(t)~dt - \dfrac{1}{2} \int_{x}^{L}e^{(t-x)B_{+}}g_{+}(t)~dt.
\end{array}\right.
\end{equation*}
We will focus ourselves, for instance, on the first term $a_1$, that is
$$a_1(x) = D^{-1}\left( e^{-xB_{-}}-e^{(x+2\ell )B_{-}}\right) B_{-}^{-1}\left( I+e^{2LB_{+}}\right) (\Pi').$$
After replacing $(\Pi')$ by it expression, we obtain explicitly 
\begin{eqnarray*}
a_{1}(x) &=& D^{-1}B_{-}^{-1}e^{-xB_{-}}v'_{-}(g_{-})(0)+D^{-1}B_{-}^{-1}e^{-xB_{-}}e^{2LB_{+}}v'_{-}(g_{-})(0) \\
&&-D^{-1}e^{(x+2\ell )B_{-}}B_{-}^{-1}v_{-}^{\prime
}(g_{-})(0)-D^{-1}e^{(x+2\ell )B_{-}}B_{-}^{-1}e^{2LB_{+}}v_{-}^{\prime
}(g_{-})(0) \\
&&-D^{-1}e^{-xB_{-}}e^{\ell B_{-}}v_{-}(g_{-})(-\ell
)-D^{-1}e^{-xB_{-}}e^{2LB_{+}}e^{\ell B_{-}}v_{-}(g_{-})(-\ell ) \\
&&+D^{-1}e^{(x+2\ell )B_{-}}e^{\ell B_{-}}v_{-}(g_{-})(-\ell
)+D^{-1}e^{(x+2\ell )B_{-}}e^{2LB_{+}}e^{\ell B_{-}}v_{-}(g_{-})(-\ell ) \\
&&+q_{-}D^{-1}B_{-}^{-1}e^{-xB_{-}}e^{LB_{+}}v_{+}(g_{+})(L)+q_{-}D^{-1}B_{-}^{-1}e^{-xB_{-}}e^{3LB_{+}}v_{+}(g_{+})(L)
\\
&&-q_{-}D^{-1}e^{(x+2\ell
)B_{-}}B_{-}^{-1}e^{LB_{+}}v_{+}(g_{+})(L)-q_{-}D^{-1}e^{(x+2\ell
)B_{-}}B_{-}^{-1}e^{3LB_{+}}v_{+}(g_{+})(L) \\
&&-q_{-}D^{-1}e^{-xB_{-}}B_{-}^{-1}v_{+}(g_{+})(0)-q_{-}D^{-1}e^{-xB_{-}}B_{-}^{-1}e^{2LB_{+}}v_{+}(g_{+})(0)
\\
&&+q_{-}D^{-1}e^{(x+2\ell
)B_{-}}B_{-}^{-1}v_{+}(g_{+})(0)+q_{-}D^{-1}e^{(x+2\ell
)B_{-}}B_{-}^{-1}e^{2LB_{+}}v_{+}(g_{+})(0) \\
&&-q_{-}D^{-1}e^{-xB_{-}}B_{-}^{-1}e^{\ell B_{-}}v_{-}(g_{-})(-\ell
)-q_{-}D^{-1}e^{-xB_{-}}B_{-}^{-1}e^{2LB_{+}}e^{\ell
B_{-}}v_{-}(g_{-})(-\ell ) \\
&&+q_{-}D^{-1}e^{(x+2\ell )B_{-}}B_{-}^{-1}e^{\ell B_{-}}v_{-}(g_{-})(-\ell
)+q_{-}D^{-1}e^{(x+2\ell )B_{-}}B_{-}^{-1}e^{2LB_{+}}e^{\ell
B_{-}}v_{-}(g_{-})(-\ell ) \\
&&+q_{-}D^{-1}e^{-xB_{-}}B_{-}^{-1}v_{-}(g_{-})(0)+q_{-}D^{-1}e^{-xB_{-}}B_{-}^{-1}e^{2LB_{+}}v_{-}(g_{-})(0)
\\
&&-q_{-}D^{-1}e^{(x+2\ell
)B_{-}}B_{-}^{-1}v_{-}(g_{-})(0)-q_{-}D^{-1}e^{(x+2\ell
)B_{-}}B_{-}^{-1}e^{2LB_{+}}v_{-}(g_{-})(0) \medskip \\ 
& = & \displaystyle\sum_{k=1}^{24} b_k (x).
\end{eqnarray*}
Let us estimate, for instance, some terms. The others can be treated analogously. 
\begin{eqnarray*}
\left\|b_1(.)\right\|_{L^p(-\ell, 0;E)} & = & \left\Vert D^{-1}B_{-}^{-1}e^{- \textbf{•} B_{-}}v'_{-}(g_{-})(0)\right\Vert _{L^{p}\left( -\ell ,0;E\right) } \\
&=&\frac{1}{2} \left\Vert D^{-1}B_{-}^{-1}e^{- \textbf{•} B_{-}}\displaystyle \int_{-\ell }^{0}e^{-tB_{-}}g_{-}(t)dt\right\Vert_{L^{p}\left( -\ell ,0;E\right) } \\
&\leqslant &\frac{1}{2}\left\Vert D^{-1} \right\Vert_{\mathcal{L}(E)}\left\Vert B_{-}^{-1}\right\Vert_{\mathcal{L}(E)}\left[ \displaystyle \int_{-\ell }^{0}\left\Vert e^{-xB_{-}}\displaystyle \int_{-\ell}^{0}e^{-tB_{-}}g_{-}(t)dt\right\Vert_{E}^{p}dx\right]^{\frac{1}{p}};
\end{eqnarray*}
then, from \eqref{estim B+-}, \refP{Prop inv det} and Lemma \ref{Lem estim exp int X 4} statement 3. (with $[a,b]=[-\ell,0]$), there exists a constant $C >0$ independent of $\lambda$ such that
$$\begin{array}{lll}
\left\|b_1(.)\right\|_{L^p(-\ell, 0;E)} & \leqslant & \displaystyle\frac{C \sqrt{d_-} \sqrt{d_-}}{\sqrt{d_- +\left\vert \lambda + r_-\right\vert} \sqrt{d_- +\left\vert \lambda + r_-\right\vert}}\left\Vert g_{-}\right\Vert_{L^{p}\left(-\ell,0;E\right)} \medskip \\
& = & \displaystyle \frac{C d_-}{d_- +\left\vert \lambda + r_-\right\vert}\left\Vert g_{-}\right\Vert_{L^{p}\left(-\ell,0;E\right)}.
\end{array}$$
For the term $b_6$, we have 
\begin{eqnarray*}
\left\| b_6(.)\right\|_{L^p(-\ell, 0;E)} & = & \left\Vert D^{-1}e^{-\textbf{•} B_{-}}e^{2LB_{+}}e^{\ell B_{-}}v_{-}(g_{-})(-\ell )\right\Vert _{L^{p}\left( -\ell ,0;E\right) } \\
& = & \displaystyle  \left\Vert \frac{1}{2}D^{-1}B_{-}^{-1}e^{2LB_{+}}e^{2\ell
B_{-}}\displaystyle e^{-\textbf{•} B_-}\int_{-\ell }^{0}e^{t B_{-}}g_{-}(t)dt\right\Vert _{L^{p}\left( -\ell ,0;E\right) };
\end{eqnarray*}
using \eqref{estim B+-} and the fact that
$$\left\|e^{2L B_+}\right\|_{\mathcal{L}(E)} \leqslant C e^{- 2cL \left\vert \lambda_+ + \rho_+ \right\vert^{1/2}} \quad \text{and} \quad \left\|e^{2 \ell B_-}\right\|_{\mathcal{L}(E)} \leqslant C e^{- 2c \ell \left\vert \lambda_- + \rho_- \right\vert^{1/2}},$$
see \eqref{YakubovEstimate}, we obtain the existence of a constant $C>0$ independent of $\lambda$ such that 
$$\left\|b_6(.)\right\|_{L^p(-\ell, 0;E)} \leqslant \frac{C d_-}{d_- +\left\vert \lambda + r_-\right\vert}\left\Vert g_{-}\right\Vert_{L^{p}\left(-\ell,0;E\right)}.$$
For the term $b_{16}$, we have
\begin{eqnarray*}
\left\|b_{16}(.)\right\|_{L^p(-\ell,0;E)} & = & \left\Vert q_{-}D^{-1}B_{-}^{-1}e^{2\ell B_{-}}e^{2LB_{+}}e^{\textbf{•} B_{-}}v_{+}(g_{+})(0)\right\Vert _{L^{p}\left( -\ell ,0;E\right) } \\
&=&\left\Vert \frac{1}{2}q_{-}D^{-1}B_{-}^{-1}B_{+}^{-1}e^{2\ell
B_{-}}e^{2LB_{+}}e^{\textbf{•} B_{-}}\displaystyle \int_{0}^{L}e^{tB_{+}}g_{+}(t)dt\right\Vert _{L^{p}\left(-\ell ,0;E\right) }; 
\end{eqnarray*}
thus, using the same arguments as above, there exists a constant $C>0$ independent of $\lambda$ such that 
$$\left\|b_{16}(.)\right\|_{L^p(-\ell,0;E)} \leqslant \frac{q \,C \sqrt{d_+/d_-}}{\sqrt{d_- + |\lambda + r_-|} \sqrt{d_+ + |\lambda + r_+|}} \left\Vert e^{\textbf{•} B_{-}}\displaystyle \int_{0}^{L}e^{tB_{+}}g_{+}(t)dt\right\Vert _{L^{p}\left(-\ell ,0;E\right) }.$$
Now, the boundedness of the semigroup $e^{\textbf{•}B_-}$ and the Hölder inequality lead us to obtain
$$\left\|b_{16}(.)\right\|_{L^p(-\ell,0;E)} \leqslant \frac{q\, C \sqrt{d_+/d_-}}{\sqrt{d_- + |\lambda + r_-|} \sqrt{d_+ + |\lambda + r_+|}} \left\|g_{+}\right\|_{L^{p}(0,L;E) }.$$
In the same way, for the term $b_{12}$, we have
\begin{eqnarray*}
\left\|b_{12}(.)\right\|_{L^p(-\ell,0;E)} & = & \left\Vert q_{-}D^{-1}B_{-}^{-1}e^{(\textbf{•} + 2\ell)B_{-}}e^{3LB_{+}}v_{+}(g_{+})(L)\right\Vert _{L^{p}\left( -\ell ,0;E\right)} \\
&=&\left\Vert \frac{1}{2}D^{-1}B_{-}^{-1}B_{+}^{-1}e^{2\ell B_{-}}e^{4LB_{+}}e^{\textbf{•} B_{-}}\displaystyle \int_{0}^{L}e^{-tB_{+}}g_{+}(t)dt\right\Vert _{L^{p}\left(
-\ell ,0;E\right) };
\end{eqnarray*}
then
$$\left\|b_{12}(.)\right\|_{L^p(-\ell,0;E)} \leqslant \frac{q \,C \sqrt{d_+/d_-}}{\sqrt{d_- + |\lambda + r_-|} \sqrt{d_+ + |\lambda + r_+|}} \left\|g_{+}\right\|_{L^{p}(0,L;E) }.$$
Therefore, we can conclude that there exists $C>0$, independent of $\lambda$ such that
\begin{equation*}
\begin{array}{lll}
\left\| a_1 (.)\right\|_{L^{p}(-\ell,L;E)} & \leqslant & \displaystyle \frac{C d_-}{d_- +\left\vert \lambda + r_-\right\vert}\left\Vert g_{-}\right\Vert_{L^{p}\left(-\ell,0;E\right)} \medskip \\
&& + \displaystyle \frac{q \,C\, \sqrt{d_+/d_-}}{\sqrt{d_- + |\lambda + r_-|} \sqrt{d_+ + |\lambda + r_+|}} \left\|g_{+}\right\|_{L^{p}(0,L;E)}.
\end{array}
\end{equation*}
The same techniques as above lead us to obtain similar estimates for the terms $a_{i}$, $i=2,...,4$ in $w_{-}$. 

For the convolution term $v_{-}(g_{-})(.)$, using \eqref{estim B+-} and Proposition \ref{Prop Estimate U et V}, we have 
\begin{equation*}
\left\Vert v_{-}(g_{-})(.)\right\Vert _{L^{p}\left( -\ell ,0;E\right)}\leqslant \frac{C d_-}{d_- + \left\vert \lambda +r_{-}\right\vert }\left\Vert g_{-}\right\Vert _{L^{p}\left( -\ell,0;E\right) }.
\end{equation*}
Hence we can conclude that there exists of a constant $C>0$, independent of $\lambda$, such that
\begin{equation*}
\left\Vert w_{-}\right\Vert _{L^{p}\left( -\ell ,0,;E\right)}\leqslant \frac{C}{\left\vert \lambda \right\vert } \left(\left\|g_-\right\|_{L^p(-\ell,0;E)} + \left\|g_+\right\|_{L^p(0,L;E)} \right).
\end{equation*}
Using the same calculus as above, we obtain the existence of a constant $C>0$, independent of $\lambda$, such that
\begin{equation*}
\left\Vert w_{+}\right\Vert _{L^{p}\left( 0,L;E\right)}\leqslant \frac{C}{\left\vert \lambda \right\vert } \left(\left\|g_-\right\|_{L^p(-\ell,0;E)} + \left\|g_+\right\|_{L^p(0,L;E)} \right).
\end{equation*}
Summing up, we obtain
\begin{equation*}
\left\Vert w\right\Vert _{L^p(-\ell,L;E)} \leqslant \frac{C}{\left\vert \lambda \right\vert } \left\Vert g\right\Vert_{L^p(-\ell,L;E)}.
\end{equation*}
Then, we conclude that there exists $C>0$, such that, for all $\lambda \in \overline{S_{\frac{\pi}{2}}}$, we have
\begin{equation*}
\left\Vert \left( \mathcal{S}-\lambda I\right) ^{-1}\right\Vert _{\mathcal{L}\left(
L^{p}(-\ell ,L;E)\right) } \leqslant \frac{C}{|\lambda|},
\end{equation*}
which implies that $\mathcal{S}$ generates a strongly continuous analytic semigroup $\left(e^{t\mathcal{S}}\right)_{t\geqslant 0}$ in $L^{p}(-\ell ,L;E)$, see for instance \cite{tanabe}, Theorem 3.3.1, p. 68 and Remark 3.3.2, p. 69.

\begin{remark}
We have the same conclusion if we replace $\lambda \in \overline{S_{\frac{\pi}{2}}}$ by 
$$\lambda \in S_{\pi-\varepsilon_0} \setminus B(0,R),$$
due to \refP{Prop f>0}.
\end{remark}

\section*{Declarations}

\textbf{Ethical Approval:} Not applicable.
 
\noindent\textbf{Competing interests:} There is no competing interests.

\noindent\textbf{Authors' contributions:} All the authors have written and reviewed this manuscript.

\noindent\textbf{Funding:} Not applicable.

\noindent\textbf{Availability of data and materials:} Not applicable.

\section*{Acknowledgments} 

\noindent The authors would like to thank very much the referee for the valuable comments and corrections which have helped us a lot to improve this paper.

\end{document}